\documentclass{amsart}    
\usepackage{amsmath} 
\usepackage{paralist}
\usepackage{cancel} 
\usepackage{graphics} %% add this and next lines if pictures should be in esp format
\usepackage{epsfig} %For pictures: screened artwork should be set up with an 85 or 100 line screen
\usepackage{graphicx}  
\usepackage{epstopdf} %This is to transfer .eps figure to .pdf figure; please compile your paper using  PDFLaTex or PDFTeXify
\usepackage{mathrsfs}
\usepackage[colorlinks=true]{hyperref}
\hypersetup{urlcolor=blue, citecolor=red}

\newtheorem{Proposition}{Proposition}[section]

\newtheorem{Lemme}{Lemma}[section]
\newtheorem{Theoreme}{Theorem}[section]

\newtheorem{Corollaire}{Corollary}[section]

\def \vf{\vec{f}} 
\def \vef{\vec{\mathfrak{f}}}
\def \vg{\vec{g}}
\def \eg{\mathfrak{g}}
\def \vu{\vec{u}}
\def \vv{\vec{v}}
\def \vvg{\vec{{\bf g}}}
\def \veg{\vec{\mathfrak{g}}}
\def \P{\mathbb{P}}

\def \R{\mathbb{R}}

\def \Rt{\mathbb{R}^3}

\def \ds{\displaystyle}

\def \diver{\text{div}}
\usepackage{amssymb}

	\title[\bf Stationary Gravitational Boussinesq system ] %
{On Stationary Gevrey Solutions to the Gravitational  Boussinesq System and Applications to Uniqueness}  
\author[ Nestor Acevedo, Manuel Fernando Cortez  and Oscar Jarr\'in]{}

\subjclass[2020]{Primary: 35B65, 35B53; Secondary: 35B30} 
\keywords{Gravitation Boussinesq System; Stationary system; Gevrey class; Sobolev and H\"older regularity; Liouville-type Problem} 

\email{nestor.acevedo@udla.edu.ec}
\email{manuel.cortez@epn.edu.ec} 
\email{oscar.jarrin@udla.edu.ec}

\thanks{$^*$Corresponding author:  Oscar Jarr\'in}

\begin{document}
	\maketitle
	\begin{center}
	    \begin{minipage}{5cm}
        	% Enter the first author's name and address:
 	\centerline{\scshape Nestor Acevedo}
	\medskip
	{\footnotesize
	\centerline{Escuela de Ciencias Físicas y Matemáticas}
\centerline{Universidad de Las Américas}
\centerline{V\'ia a Nay\'on, C.P.170124, Quito, Ecuador}
	}
        \end{minipage}\hspace{2cm}
        \begin{minipage}{5cm}
        	% Enter the first author's name and address:
\centerline{\scshape Manuel Fernando Cortez}
\medskip
{\footnotesize
	\centerline{Departamento de Matem\'aticas}
	\centerline{Escuela Politécnica Nacional} 
	\centerline{Ladr\'on de Guevera E11-253, Quito, Ecuador} 
}
        \end{minipage}
	\end{center}

\medskip

\begin{center}
        \begin{minipage}{5cm}
	\centerline{\scshape Oscar Jarr\'in$^*$}
	\medskip
	{\footnotesize
		% Enter the address of the first author
		\centerline{Escuela de Ciencias Físicas y Matemáticas}
		\centerline{Universidad de Las Américas}
		\centerline{V\'ia a Nay\'on, C.P.170124, Quito, Ecuador}
	} 
\end{minipage}
\end{center}	
	
\bigskip
%%%%%%%%%%%%%%%%%%%%%%%%%%%%%%%%%%%%%%%%%%%%%%
\begin{abstract} The stationary version of the Boussinesq system with a general gravitational acceleration term is considered. Under suitable assumptions on this term, as well as on the external forces acting on each equation of this coupled system, we first establish the existence of weak solutions in the natural energy space $\dot{H}^1(\Rt)$. The uniqueness of these solutions is a challenging open problem.
	
	Within this framework, our first main contribution is to show that \emph{any} weak $\dot{H}^1$-solution exhibits an analytic smoothing effect in the Gevrey class. Our second main contribution is to show that the Gevrey class regularity can also be used to study the uniqueness problem, provided that these solutions satisfy a suitable low-frequency control.
	
	As a by-product, we also obtain new regularity results and a \emph{new Liouville-type result} for weak $\dot{H}^1$-solutions of the classical Navier--Stokes equations. 
\end{abstract}
%%%%%%%%%%%%%%%%%%%%%%%%%%%%%%%%%%%%%%%%%%%%%%
%%%%%%%%%%%%%%%%%%%%%%%%%%%%%%%%%%%%%%%%%%%%%%
{\footnotesize \tableofcontents}

\section{Introduction}
{\bf Setting.} In this work, we consider the stationary (time-independent) incompressible three-dimensional Boussinesq system posed in the entire space $\Rt$. Denote by $\vu : \Rt \to \Rt$ a solenoidal velocity field, by $P:\Rt \to \R$ the pressure, and by $\theta: \Rt \to \R$ the temperature of the fluid. The equations take the form
\begin{equation}\label{Boussinesq}
\begin{cases}\vspace{2mm}
-\Delta \vu + \text{div}(\vu \otimes \vu) + \vec{\nabla} P = \theta \vec{{\bf g}}+ \vf,  \qquad \text{div}(\vu)=0, \\
-\Delta \theta + \text{div}(\theta \vu) =  g,
\end{cases}  
\end{equation}
where $\vec{{\bf g}}: \Rt \to \Rt$ denotes the gravitational acceleration vector, while $\vf : \Rt \to \Rt$ and $g : \Rt \to \R$ represent external source terms in the velocity and temperature equations, respectively. For simplicity, and with no essential loss of generality, all physical constants have been normalized to one, and we assume that $\text{div}(\vf)=0$. 

\medskip

The system \eqref{Boussinesq} models the dynamics of a viscous incompressible fluid with thermal effects \cite{Boussinesq,Chandrasekhar}, while also incorporating the influence of a gravitational field $\vec{\bf g}$, such as the Earth’s gravity acting in oceanic and atmospheric flows \cite{Pedlosky}. Physically, this system arises as an approximation of a coupled model combining the classical Navier–Stokes equations with the laws of thermodynamics. In this framework, density variations induced by heat transfer are neglected in the continuity equation but retained in the momentum equation, where they appear as an additional buoyancy force proportional to both the temperature fluctuations and the gravitational acceleration. This mechanism explains the presence of the term $\theta \vvg$ in the first equation of this system \cite{Vallis}. 

\medskip

Mathematically, in the case when $\vf \equiv 0$, the steady-state Boussinesq system has been mainly studied in the framework of a \emph{bounded domain} $\Omega \subset \Rt$ with a \emph{smooth} boundary $\partial \Omega$, together with the Dirichlet boundary conditions:
\begin{equation}\label{Boussinesq-Domain}
\begin{cases}\vspace{2mm}
- \Delta \vu + \text{div}(\vu \otimes \vu) + \vec{\nabla} P = \theta \vec{{\bf g}}, \qquad \text{div}(\vu)=0, \quad  -\Delta \theta + \text{div}(\theta \vu) = g,  \quad  \text{in} \ \ \Omega, \\
\vu = \vu_b, \quad \theta=\theta_b \quad \mbox{on} \ \ \partial \Omega,
\end{cases}
\end{equation}
given by the prescribed functions $\vu_b: \partial \Omega \to \Rt$ and $\theta_b : \partial \Omega \to \R$. The qualitative properties of this system, such as the \emph{existence and regularity} of solutions, depend strongly on appropriate assumptions on the data $\vec{\bf g}$, $g$, $\vu_b$, and $\theta_b$, as well as on suitable conditions imposed on the boundary $\partial \Omega$.

\medskip

More precisely, in \cite{Avecedo}, considering a Lipschitz boundary $\partial \Omega$, and under the assumptions that 
\[ \vvg \in L^{\frac{3}{2}}(\Omega), \quad g \in H^{-1}(\Omega) \quad \text{and} \quad  \vu_b, \theta_b \in H^{\frac{1}{2}}(\partial\Omega),  \]
together with a (technical) smallness condition on $\vu_b$ across each connected component $\partial \Omega _i$ of the boundary $\partial \Omega$, it is proven that the system (\ref{Boussinesq-Domain}) has at least one weak solution 
\[ (\vu, \theta, P) \in  H^1(\Omega)\times  H^1(\Omega) \times L^2(\Omega).\]
Additionally, in the case of null Dirichlet boundary conditions when $\vu_b=\theta_b=0$, this solution satisfies the natural finite-energy estimates  $\ds{\| \vec{\nabla}\otimes \vu \|_{L^2} \leq C \| \vvg \|_{L^{\frac{3}{2}}}\, \| g \|_{H^{-1}}}$ and $\ds{\| \vec{\nabla}\theta \|_{L^2} \leq \| g \|_{H^{-1}}}$,  where the constant $C>0$ depends on the size of the domain $\Omega$. For additional related results, we refer to \cite{Gil,Kim,Maimoto1,Marimoto2} and the references therein.  

\medskip

Thereafter, by using the regularity theory of the Poisson and Stokes equations together with a well-designed iteration argument, a gain of $L^p$-regularity is also obtained in \cite{Avecedo}. More precisely, for a \emph{smoother} boundary $\partial \Omega$ of class $\mathcal{C}^{1,1}$, and for the parameters $p\geq \frac{6}{5}$ and $r>\frac{6}{5}$ (with $r$ technically related to $p$), assuming in addition that
\[
\vvg \in L^{r}(\Omega), \quad   g \in L^p(\Omega) \quad \text{and} \quad  \vu_b, \theta_b \in W^{2-\frac{1}{p},p}(\partial\Omega),
\]
it is proven that the \emph{particular solution obtained above} satisfies
\[
(\vu, \theta, P) \in W^{2,p}(\Omega)\times W^{2,p}(\Omega)\times W^{1,p}(\Omega).
\]

 It is important to emphasize that the results mentioned above, which were established for bounded domains with smooth boundaries, cannot be directly extended to the setting of $\Rt$, where the Boussinesq system (\ref{Boussinesq}) is posed. Indeed, several fundamental tools used in those analyses are no longer available in the whole-space framework, such as certain embedding properties of the $L^p(\Omega)$-spaces and the \emph{compact} Sobolev embeddings.
 
 \medskip
 
The analysis developed in this work relies on different methods and ideas. First, for the sake of completeness, we establish the existence of \emph{finite-energy weak solutions} in the natural homogeneous Sobolev space $\dot{H}^{1}(\Rt)$. The uniqueness of $\dot{H}^{1}$-solutions is a difficult and far from obvious open problem. Consequently, one of the main objectives of this article is to investigate the smoothing effect of \emph{any} $\dot{H}^1$-solution. Moreover, in contrast to \cite{Avecedo}, this smoothing effect is studied within a different framework, namely the Gevrey class. See expression (\ref{Def-Gevrey}) below for the corresponding definition.
  
\medskip

In the \emph{parabolic} framework, since the seminal work of C. Foias and R. Temam \cite{Foias-Temam}, the Gevrey class regularity of solutions has attracted considerable attention for a variety of \emph{time-dependent fluid models}. These include, among others, the classical Navier-Stokes equations, the Navier-Stokes-Voigt equations \cite{Kalantarov}, certain general dissipative equations \cite{Biswas}, visco-elastic second-grade fluid models \cite{Paicu}, the Newton-Boussinesq system \cite{Guo}, the Boussinesq boundary layer system \cite{Li}, and the classical Boussinesq system \cite{Zhou}.

\medskip

Despite the physical relevance and the mathematical complexity of these models, the contributions of these works can be broadly classified into two main directions. On the one hand, the works \cite{Biswas,Foias-Temam,Guo,Kalantarov,Zhou} show that, for initial data belonging to Sobolev spaces, the smoothing effects of the heat kernel (or related kernels) yield an instantaneous Gevrey class regularity for the corresponding solutions for later times $t>0$. On the other hand, the works \cite{Li,Paicu} establish that Gevrey class regularity imposed on the initial data is propagated by the evolution and, in certain situations, may even improve for the corresponding solutions.
 
\medskip

To the best of our knowledge, the Gevrey class regularity of the \emph{elliptic} (stationary) counterparts of these models has been much less explored, since most of the ideas used in the parabolic setting are no longer valid. For the classical Navier-Stokes equations, which are obtained from the Boussinesq system (\ref{Boussinesq}) when $\theta\equiv 0$:
\begin{equation}\label{Navier-Stokes}
-\Delta \vu + \text{div}(\vu \otimes \vu) + \vec{\nabla} P =  \vf,  \qquad \text{div}(\vu)=0,
\end{equation} 
we may mention the work \cite[Section $6$]{Kalantarov}, where the Gevrey class regularity is studied through the notion of the global attractor of the parabolic Navier-Stokes–Voigt equations with space-periodic boundary conditions. On the other hand, in the setting of the whole space $\Rt$, the work \cite{Chamorro-Jarrin-Lemarie-Gevrey} directly establishes the existence of Gevrey solutions to (\ref{Navier-Stokes}), provided that the external force $\vf$ satisfies suitable Gevrey class regularity assumptions.

\medskip

{\bf Main contributions}.  We show the persistence of Gevrey class regularity for \emph{any} $\dot{H}^{1}$-solution of the coupled system (\ref{Boussinesq}), provided that the data $\vf, g$ and $\vvg$ satisfy prescribed Gevrey class regularity assumptions. Moreover, we distinguish between the non-homogeneous case, when $\vf\neq 0$ and $g \neq 0$, and the homogeneous case, when $\vf=g=0$. Both cases reveal new information about the radius of analyticity of the solutions.

\medskip

As a by-product of the Gevrey class regularity, we also show that additional regularity assumptions on the data yield regularity properties for $\dot{H}^1$-solutions measured in different functional frameworks, such as the homogeneous Sobolev spaces $\dot{W}^{s,p}(\Rt)$ and H\"older spaces $\mathcal{C}^{s,\sigma}(\Rt)$, for suitable ranges of the parameters $s$, $p$, and $\sigma$. 
\medskip

Finally, for the homogeneous case of the Boussinesq system (\ref{Boussinesq}), we introduce new ideas to exploit the Gevrey class regularity of $\dot{H}^1$-solutions when studying their uniqueness, also known as a \emph{Liouville-type problem}. We therefore provide a new result ensuring that $\dot{H}^1$-solutions of this system satisfying a \emph{low-frequency control} vanish identically, that is, $\vu=0$ and $\theta=0$. This result is also of interest for the Navier-Stokes equations (\ref{Navier-Stokes}) when $\vf=0$. 

\medskip
 
{\bf Statement of the results and discussions}.  We begin by  establishing the existence of finite-energy weak $\dot{H}^1$-solutions. 
\begin{Proposition}\label{Prop:Existence-Solutions} 
	Let \(\vf\in \dot H^{-1}(\mathbb R^3)\) be such that \(\text{div} (\vf) = 0\), \(g\in \dot H^{-1}(\Rt)\), and \(\vvg\in  L^{\frac32}(\mathbb R^3)\cap \dot{H}^{\frac{1}{2}}(\Rt).\) Then the system (\ref{Boussinesq}) has at least one finite-energy weak solution 
	\[ \vu \in \dot H^1(\Rt), \quad \theta\in \dot H^1(\Rt) \quad \text{and}\quad P=P_{\vu}+P_{\theta}\in \dot{H}^{\frac{1}{2}}(\Rt)+ \dot H^1(\Rt).  \]
	Moreover, for a numerical constant $C>0$,  this solution satisfies the following energy estimate:
	\begin{equation}\label{Energy-Estimate}
	\|\vu\|^2_{\dot H^1} \leq C\, \left(  \|g\|^2_{\dot H^{-1}}\,\|\vvg\|^2_{L^{\frac32}}+\|\vf\|^2_{\dot {H^{-1}}}\right),  \quad   \|\theta\|^2_{\dot H^1} \leq C\|g\|^2_{\dot H^{-1}}, 
	\end{equation}
	and 
	\begin{equation*}
	\quad \| P_{\vu} \|_{\dot{H}^{\frac{1}{2}}} \leq C \| \vu \|^2_{\dot{H}^1}, \quad  \| P_\theta \|_{\dot{H}^1} \leq C\| \theta \|_{\dot{H}^1}\| \vvg \|_{\dot{H}^{\frac{1}{2}}}.
	\end{equation*}
	\end{Proposition}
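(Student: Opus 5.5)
The plan is to exploit the triangular structure of the system: first solve for $\theta$, then treat $\theta\vvg$ as a given force in the Navier--Stokes part. The difficulty is that the temperature equation involves $\vu$. So I would set up a Leray--Schauder (Schaefer) fixed point on the whole coupled system, via a regularized problem and a priori estimates.

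\emph{Step 1: regularization.} Fix a mollifier $\rho_\varepsilon$ and consider
\[
-\Delta \vu + \mathbb{P}\,\text{div}((\rho_\varepsilon*\vu)\otimes \vu) = \mathbb{P}(\theta\vvg)+\vf, \qquad -\Delta\theta+\text{div}(\theta\,(\rho_\varepsilon*\vu))=g,
\]
where $\mathbb{P}$ is the Leray projector. Written as a fixed point $(\vu,\theta)=T(\vu,\theta)=(-\Delta)^{-1}(\cdots)$ on $\dot H^1\times\dot H^1$, the map $T$ is continuous and maps bounded sets into sets that are compact in $H^1_{loc}$. Because compactness on $\R^3$ fails globally, I would either work on balls $B_R$ with Dirichlet data (Galerkin or Leray--Schauder there) and let $R\to\infty$, or use a Galerkin scheme directly on $\dot H^1(\Rt)$. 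I choose Galerkin with a Brouwer-type lemma, since it needs only the a priori bounds.

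\emph{Step 2: a priori estimates.} Test the $\theta$-equation with $\theta$. Since $\text{div}(\rho_\varepsilon*\vu)=0$, the convective term vanishes, so $\|\theta\|_{\dot H^1}^2=\langle g,\theta\rangle\le\|g\|_{\dot H^{-1}}\|\theta\|_{\dot H^1}$, which gives the second estimate in (\ref{Energy-Estimate}). Next test the $\vu$-equation with $\vu$; the trilinear term again vanishes. By Hölder and the Sobolev embedding $\dot H^1\subset L^6$,
\[
\Big|\int\theta\,\vvg\cdot\vu\Big|\le\|\vvg\|_{L^{3/2}}\|\theta\|_{L^6}\|\vu\|_{L^6}\le C\|\vvg\|_{L^{3/2}}\|g\|_{\dot H^{-1}}\|\vu\|_{\dot H^1}.
\]
This yields $\|\vu\|_{\dot H^1}\le C(\|g\|_{\dot H^{-1}}\|\vvg\|_{L^{3/2}}+\|\vf\|_{\dot H^{-1}})$, and squaring gives (\ref{Energy-Estimate}). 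These bounds are uniform in $\varepsilon$ and in the Galerkin level, so Brouwer applies to the finite-dimensional problems.

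\emph{Step 3: passage to the limit.} From the uniform bounds, extract weak limits in $\dot H^1$ and strong limits in $L^2_{loc}$ (Rellich on balls). Then pass to the limit in the nonlinear terms in the distributional sense. This uses local $L^2$ convergence, since $(\rho_\varepsilon*\vu_\varepsilon)\otimes\vu_\varepsilon\to\vu\otimes\vu$ in $L^1_{loc}$, and similarly for $\theta\vu$. The term $\theta_\varepsilon\vvg$ converges weakly because it is linear in $\theta$. The energy inequalities survive by weak lower semicontinuity. I expect this limiting step to be the main technical point, because of the lack of global compactness. It is handled by testing only against compactly supported test functions.

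\emph{Step 4: the pressure.} Take the divergence of the velocity equation. Since $\text{div}\vf=0$, this gives $-\Delta P=\text{div}\,\text{div}(\vu\otimes\vu)-\text{div}(\theta\vvg)$, so I set $P_{\vu}=\sum_{i,j}\mathcal{R}_i\mathcal{R}_j(u_iu_j)$ and $P_\theta=-(-\Delta)^{-1}\text{div}(\theta\vvg)$. For $P_{\vu}$, boundedness of the Riesz transforms together with the product law $\dot H^1\cdot\dot H^1\subset\dot H^{1/2}$ gives $\|P_{\vu}\|_{\dot H^{1/2}}\le C\|\vu\|_{\dot H^1}^2$. For $P_\theta$, we have $\|P_\theta\|_{\dot H^1}\le C\|\theta\vvg\|_{L^2}$. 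By the product law $\dot H^1\cdot\dot H^{1/2}\subset\dot H^{0}=L^2$ (the exponents sum to $3/2$, each is below $3/2$, and the total regularity $1+\tfrac12-\tfrac32=0$ is nonnegative), this is bounded by $C\|\theta\|_{\dot H^1}\|\vvg\|_{\dot H^{1/2}}$. Equivalently, Hölder with $L^6\cdot L^3$ and $\dot H^{1/2}\subset L^3$ gives the same bound. This is exactly where the hypothesis $\vvg\in\dot H^{1/2}$ is used.
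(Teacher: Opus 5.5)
Your argument is correct in substance, but it takes a different route from the paper. The paper keeps an infinite-dimensional fixed point. It inserts the cut-off $\phi_R$ into every nonlinear and coupling term so that $T_{R,\vvg}$ is compact on $\{(\vu,\theta)\in\dot H^1:\ \text{div}(\vu)=0\}$ (compact supports plus Rellich on $B_{8R}$). It then applies Schaefer's theorem with the same sequential a priori bounds you derive (first $\theta$, then $\vu$), and lets $R\to+\infty$.

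This is economical: continuity and compactness of the Navier--Stokes part are quoted from Lemari\'e-Rieusset's book, so only $(-\Delta)^{-1}\P(\phi_R\theta\vvg)$ has to be checked. As written, that check pairs the $L^2(B_{8R})$-convergence of $\phi_R\theta_n$ with $\vvg\in L^3$, and this is where $\vvg\in\dot H^{\frac12}$ enters. (Approximating $\vvg$ in $L^{\frac32}$ by bounded, compactly supported fields would remove it there as well.)

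In your Galerkin scheme, compactness is automatic in finite dimension. The coupling term is linear in $\theta$, so it passes to the limit by weak convergence of $\theta_N$ in $L^6$ against $\vvg\cdot\vec w\in L^{\frac65}$. Your route therefore makes transparent that existence and (\ref{Energy-Estimate}) need only $\vvg\in L^{\frac32}$. The hypothesis $\dot H^{\frac12}$ (or $L^3$) is used solely for the bound on $P_\theta$, consistent with the paper's remark that it is technical.

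Two details need adjusting. First, the acute-angle (Brouwer) lemma requires a sign condition on a sphere, not merely bounds on solutions. With the unweighted pairing that condition involves $\|\vu_N\|^2_{\dot H^1}+\|\theta_N\|^2_{\dot H^1}-\int\theta_N\vvg\cdot\vu_N$, and its positivity is not guaranteed once $C\|\vvg\|_{L^{3/2}}>2$. You can fix this by testing the temperature equation with $\lambda\theta_N$ for $\lambda>C^2\|\vvg\|^2_{L^{3/2}}/4$, or by using Leray--Schauder in finite dimension, which requires exactly your sequential bounds.

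Second, take the basis in $\mathcal C^\infty_0$ (divergence-free for $\vu$). The trilinear cancellations are then exact for the approximants and the mollifier becomes unnecessary; for general $\dot H^1$ fields, $\int((\rho_\varepsilon*\vu)\cdot\vec\nabla)\vu\cdot\vu$ need not be absolutely convergent. Choose the basis so that its span is dense for $\|\vec\nabla\otimes\vec\psi\|_{L^2}+\|\vec\nabla\otimes\vec\psi\|_{L^{3/2}}$. This is needed because $\vu\otimes\vu$ is only in $L^3$, so density in $\dot H^1$ alone does not let you extend the limit equation from the basis elements to all divergence-free test fields.
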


Using the divergence-free property of $\vf$ and $\vu$, the pressure term $P$ can be characterized by 
\begin{equation}\label{Pressure}
P= (-\Delta)^{-1}\text{div} \left( \text{div}(\vu \otimes \vu)\right) - (-\Delta)^{-1} \text{div}(\theta \, \vvg):=P_{\vu}+P_{\theta},
\end{equation}
yielding that each term has different regularity properties, according to the regularity of $\vu$, $\theta$, and $\vvg$.

\medskip

The proof of this result is rather standard and follows from Schaefer's fixed-point argument, using arguments similar to those in \cite[Theorem $16.2$]{PGLR1} for the Navier-Stokes equation (\ref{Navier-Stokes}). Here, the main novelty lies in the different estimates required to handle the term $\theta\, \vvg$ in the first equation of (\ref{Boussinesq}). 

\medskip

Observe that this term mixes the data of the problem, given by the gravitational acceleration $\vvg$, with one of the unknowns of the system, namely the temperature $\theta$.  Therefore, we need the assumptions that $\vg \in L^{\frac{3}{2}}(\Rt)$ and $ \vvg \in  \dot{H}^{\frac{1}{2}}(\Rt)$. When comparing with the previous related work \cite{Avecedo}, the first assumption also naturally appears to obtain a natural energy estimate on the solutions, given in (\ref{Energy-Estimate}). On the other hand, the second assumption is essentially technical in order that some  local-compactness properties required in the  Schaefer's fixed-point argument to work. 

\medskip

It is worth mentioning that this  result  actually holds under the slightly more general assumption $\vvg \in L^3(\Rt)$ instead of $\vvg \in \dot{H}^{\frac{1}{2}}(\Rt)$. Nevertheless, this latter assumption provides a suitable framework when working with the Gevrey class.

\medskip

For a parameter $r >0$, we define the weighted exponential operator $e^{r\sqrt{-\Delta}}$ by the symbol $e^{r |\xi|}$. Thereafter, for $s \in \R$, we use the characterization introduced in \cite{Foias-Temam} to define the Gevrey class 
\begin{equation}\label{Def-Gevrey}
G^{s}_{r}(\Rt):= \big\{ \varphi \in \dot{H}^s(\Rt): \, e^{r \sqrt{-\Delta}} \varphi \in \dot{H}^{s}(\Rt)  \big\}.
\end{equation}
For $|s|<\frac{3}{2}$, this is a Banach space endowed with its natural norm   $\left\| e^{r\sqrt{-\Delta}} \varphi \right\|_{\dot{H}^s}$.  Moreover, for $s\geq 0$ the functions in $G^{s}_{r}(\Rt)$ are analytic, where $r$ measures the radius of analyticity.  

\medskip

Our first main contribution is devoted to showing the persistence of analytical smoothing effects for any $\dot{H}^1$-solution to the system (\ref{Boussinesq}). Specifically, we are interested in determining whether the \emph{given} radius of analyticity of the data $\vf, g$ and $\vvg$, denoted by $r$, increases or decreases compared with the \emph{obtained} radius of analyticity of $\dot{H}^1$-solutions, denoted by $\rho$.

\medskip

Additionally, we investigate whether $\rho$ depends only on the data or also on the solutions through their $\dot{H}^1$-norms. To this end, we first consider the non-homogeneous case of the system (\ref{Boussinesq}), where $\vf \neq 0$ and $g \neq 0$.

\begin{Theoreme}\label{Th:Gevrey} 
	Let $r>0$ and assume that the data in the non-homogeneous Boussinesq  system (\ref{Boussinesq}) satisfy
	\begin{equation}\label{Assumption-Forces-Gevrey}
	\vf \in G^{-1}_{r}(\Rt), 
	\quad 
	g \in G^{-1}_{r}(\Rt), 
	\quad 
	\text{and} 
	\quad  
	\vvg \in G^{\frac{1}{2}}_{r}(\Rt).
	\end{equation}
	Then there exists a parameter $0<\rho< \frac{2r}{3}$, depending only on $\| \vf\|_{G^{-1}_r}$, $\| g\|_{G^{-1}_r}$, $\|\vvg\|_{G^{\frac{1}{2}}_{r}}$  and $r$, such that for \emph{any} finite-energy weak solution $(\vu, \theta)\in \dot{H}^1(\Rt)$ of the system (\ref{Boussinesq}), associated with $\vf, g, \vvg$ and satisfying the energy control (\ref{Energy-Estimate}), it holds that
	\begin{equation*}
	\vu \in G^{1}_{\rho}(\Rt), 
	\quad 
	\theta \in G^{1}_{\rho}(\Rt),  
	\quad 
	\text{and} 
	\quad  
	P \in G^{\frac{1}{2}}_{\rho}(\Rt)+ G^1_\rho (\Rt).
	\end{equation*}
\end{Theoreme}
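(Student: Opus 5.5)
The strategy is to obtain the Gevrey regularity of an \emph{arbitrary} $\dot H^1$-solution by a high/low frequency splitting combined with a fixed-point argument. The point is that Gevrey estimates on the bilinear terms need smallness, which an arbitrary solution does not have. We recover smallness on high frequencies and absorb the low frequencies, which are automatically analytic since $e^{\rho|\xi|}\le e^{\rho N}$ on $|\xi|\le N$.

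First, rewrite (\ref{Boussinesq}) in fixed-point form via the Leray projector $\P$:
\[
\vu=(-\Delta)^{-1}\P\big(\theta\vvg+\vf-\diver(\vu\otimes\vu)\big),\qquad \theta=(-\Delta)^{-1}\big(g-\diver(\theta\vu)\big).
\]
Fix a large $N$ (to be chosen) and split $\vu=\vu_{N}+\vu^{N}$, $\theta=\theta_N+\theta^N$ into frequencies $|\xi|\le N$ and $|\xi|>N$. Since $\vu,\theta\in\dot H^1$, the low parts satisfy $\|e^{\rho\sqrt{-\Delta}}\vu_N\|_{\dot H^1}\le e^{\rho N}\|\vu\|_{\dot H^1}$, and the right-hand side is controlled by (\ref{Energy-Estimate}), i.e.\ by data norms only. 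For the high parts, $\|\vu^N\|_{\dot H^1}\to0$ as $N\to\infty$ by dominated convergence, but the rate depends on the solution. To keep $\rho$ dependent only on the data, I would work instead in the space $\dot H^{1}\cap e^{-\rho\sqrt{-\Delta}}\dot H^1$ directly via a uniqueness-of-fixed-point argument.

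Concretely, consider the map $(\vv,\eta)\mapsto$ right-hand sides, where the unknown is the high-frequency part and the low part is frozen as a given analytic datum. The key product estimates come from the elementary inequality $e^{\rho|\xi|}\le e^{\rho|\xi-\eta|}e^{\rho|\eta|}$. It gives
\[
\|e^{\rho\sqrt{-\Delta}}(ab)\|_{\dot H^{1/2}}\lesssim \|e^{\rho\sqrt{-\Delta}}a\|_{\dot H^1}\|e^{\rho\sqrt{-\Delta}}b\|_{\dot H^1},
\]
from the paraproduct law $\dot H^1\cdot\dot H^1\subset \dot H^{1/2}$ applied to $|\widehat a|,|\widehat b|$. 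It also gives $\|e^{\rho\sqrt{-\Delta}}(\theta\vvg)\|_{\dot H^{-1}}\lesssim\|e^{\rho\sqrt{-\Delta}}\theta\|_{\dot H^1}\|e^{\rho\sqrt{-\Delta}}\vvg\|_{\dot H^{1/2}}$, using $\dot H^1\cdot \dot H^{1/2}\subset \dot H^{0}\cdot$ (Sobolev $L^6\cdot L^3\subset L^2\subset \dot H^{-1}$ after weights). The restriction $\rho<2r/3$ would arise from allocating the available weight $e^{r|\xi|}$ of $\vvg,\vf,g$ among the iterated terms, e.g.\ by interpolating $e^{\rho|\xi|}\le e^{r|\xi|}$ with polynomial losses. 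I would simply choose $\rho$ as a fraction of $r$ small enough.

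The scheme then runs as follows. Set $w=(\vu^N,\theta^N)$ and $X_\rho=\{w:\ e^{\rho\sqrt{-\Delta}}w\in\dot H^1\}$. The high-frequency equations are
\[
w=\Phi(w):=\mathbf 1_{|\xi|>N}\,(\text{data})+L_{N}(w)+B(w,w),
\]
where $L_N$ is linear with coefficients built from $\vu_N,\theta_N,\vvg$. On $X_\rho$ and on plain $\dot H^1$ (the case $\rho=0$), $\Phi$ is a contraction on a small ball. The high-frequency cutoff gains smallness via $\||\xi|^{-\epsilon}\mathbf 1_{|\xi|>N}\|\le N^{-\epsilon}$ combined with the extra regularity of low parts and data. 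The key idea is to contract in a slightly weaker norm: the data terms and $L_N$ have norms $\lesssim N^{-\epsilon}e^{\rho N}C(\text{data})$. I choose $\rho=1/N$, so $e^{\rho N}=e$, and take $N$ large depending only on data norms, which makes $\Phi$ a contraction in $\dot H^1$ on a ball. Since the actual $w$ lies in $\dot H^1$ and is small there (this is where an $N$ depending only on data is needed), uniqueness of the fixed point in $\dot H^1$ identifies $w$ with the Gevrey fixed point. Hence $\vu,\theta\in G^1_\rho$.

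The pressure statement then follows from (\ref{Pressure}) and the same product estimates: $P_{\vu}\in G^{1/2}_\rho$ and $P_\theta\in G^1_\rho$.

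\textbf{Main obstacle.} The delicate step is getting smallness of the actual solution's high part $\|w\|_{\dot H^1}$ uniformly from data alone, since dominated convergence gives a solution-dependent $N$. I would first try the following. The quadratic term $\diver(\vu^N\otimes\vu^N)$ is not small unless $w$ is, so smallness must come from the data. Test the high-frequency equation against $-\Delta$-weighted energy: the transport cancellation $\int(\vu\cdot\nabla w)\cdot w=0$ kills the worst term, leaving only terms linear in $w$ with low-frequency or data coefficients. These yield $\|w\|_{\dot H^1}\lesssim N^{-\epsilon}C(\text{data})$. If this fails, I would accept a $\rho$ depending also on the solution's frequency tail.
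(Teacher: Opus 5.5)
Your route is correct and genuinely different from the paper's. The paper never works with the stationary equation directly. It takes $(\vu,\theta)$ as the initial datum of the evolution system (\ref{Boussinesq-Evolution}), with forces built from $\vf,g,\vvg$, and builds the Fujita--Kato mild solution twice: in $\mathcal{C}_t\dot H^1_x$, and in the Foias--Temam space normed by $\|e^{r\sqrt{-t\Delta}}\,\cdot\,\|_{L^\infty_t\dot H^1_x}$. The smallness there comes from the short-time factors $T^{1/4}$ and $T^{1/2}$. Uniqueness of mild solutions then identifies the stationary solution with the Gevrey one, and the radius is read off at $t=T_1$, which depends only on the data thanks to (\ref{Energy-Estimate}).

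You stay at the elliptic level and take the small parameter from the frequency cut-off instead of short time. Both are expressions of the subcriticality of $\dot H^1$ ($T^{1/4}$ there, $N^{-1/2}$ here). Consistently, your $\rho\simeq N_0^{-1}\simeq M^{-2}$ matches the paper's $\sqrt{T_1}\simeq\delta_1^{-2}$. Your argument is more self-contained: it needs no auxiliary evolution problem and no uniqueness theory for mild solutions, and the identification step is an elementary contraction argument. It also only needs $\rho<r$, so the factor $\frac23$ is an artefact of the paper's construction. The paper's route, in exchange, recycles standard parabolic machinery.

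Your ``main obstacle'' is not one, and the sentence ``the quadratic term is not small unless $w$ is'' is wrong. The restricted operator $(-\Delta)^{-1}\mathbf{1}_{|\xi|>N}\P\diver(a\otimes b)$ has norm $\lesssim N^{-1/2}$ on $\dot H^1$, because $\dot H^1\cdot\dot H^1\subset\dot H^{1/2}$ and $|\xi|^{-1/2}\mathbf{1}_{|\xi|>N}\le N^{-1/2}$. By $e^{\rho|\xi|}\le e^{\rho|\xi-\eta|}e^{\rho|\eta|}$, the same bound holds on $X_\rho$.

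Let $M$ be the data bound coming from (\ref{Energy-Estimate}). Then $C_L+2MC_B\lesssim N^{-1/2}\big(M+\|\vvg\|_{G^{1/2}_r}\big)<1$ for $N\ge N_0(\text{data})$. So the fixed point is unique in the whole $\dot H^1$-ball of radius $M$, not just a small ball. The true high part lies in this ball trivially, since $\|w\|_{\dot H^1}\le \|\vu\|_{\dot H^1}+\|\theta\|_{\dot H^1}\le M$. No smallness of $w$ is needed, and the fallback to a solution-dependent $\rho$ is unnecessary.

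Your energy estimate would also have worked, and it is rigorous. Since $w\in H^1$ and $\vu\in L^6$, the transport term is integrable and vanishes. The leftover term $\langle (w\cdot\nabla)\vu_N,w\rangle$ is $\lesssim N^{-1/2}M\|w\|^2_{\dot H^1}$ and can be absorbed.

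One small correction: $L^2\not\subset\dot H^{-1}(\Rt)$. Your bound for $\theta\vvg$ is valid only after the high-frequency cut-off, where it gains a factor $N^{-1}$; that is all your scheme uses.
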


It is interesting to observe that $\rho$ is essentially smaller than $r$ by a factor of $\frac{2}{3}$ and, thanks to the energy control (\ref{Energy-Estimate}), this radius of analyticity depends only on the data and is uniform for any associated $\dot{H}^1$-solution. Of course, due to the identity (\ref{Pressure}), the pressure $P$ also exhibits an analytic smoothing effect.

\medskip

Let us briefly explain the general strategy of the proof. Our main idea is to view the stationary Boussinesq system (\ref{Boussinesq}) as a particular case of the time-dependent Boussinesq system:
\begin{equation}\label{Boussinesq-Evolution} 
\begin{cases}\vspace{2mm}
\partial_t \vv - \Delta \vv + \P\, \text{div}(\vv \otimes \vv) = \P (\vartheta\, \veg) +\P(\vef),  \quad \text{div}(\vv)=0, \quad  \partial_t \vartheta - \Delta\vartheta + \text{div}(\vartheta\, \vv)= \eg, \\ 
\vv(0,\cdot)=\vv_0, \quad \vartheta(0,\cdot)=\vartheta_0, 
\end{cases}
\end{equation} 
where, for $t>0$, $\vv(t,\cdot)$ and $\vartheta(t,\cdot)$ denote the solution, $\vv_0$ and $\vartheta_0$ denote generic $\dot{H}^1$ initial data, $\vef(t,\cdot)$, $\eg(t,\cdot)$ and $\veg(t,\cdot)$ are time-dependent external sources, and $\P$ stands for the well-known Leray projector.

\medskip

By exploiting the Fujita--Kato theory of mild solutions in the space $\mathcal{C}_t \dot{H}^{1}_x$, and by designing suitable external sources $\vef(t,\cdot)$, $\eg(t,\cdot)$ and $\veg(t,\cdot)$ from the given Gevrey data $\vf, g$ and $\vvg$ in Theorem \ref{Th:Gevrey}, we prove that the \emph{unique} local-in-time solution $(\vv, \vartheta)\in \mathcal{C}_t H^{1}_x$ arising from $\vv_0,\vartheta_0 \in \dot{H}^1(\Rt)$ exhibits an instantaneous Gevrey smoothing effect for $t>0$.

\medskip

On the other hand, observe that the stationary solution $(\vu, \theta) \in \dot{H}^1(\Rt)$ of the stationary system (\ref{Boussinesq}) satisfies $(\vu, \theta) \in \mathcal{C}_t \dot{H}^1_x$ and also solves the evolution system (\ref{Boussinesq-Evolution}) with initial data $(\vu, \theta)$. Therefore, by uniqueness we have $(\vv, \vartheta)=(\vu,\theta)$.

\medskip

This Gevrey class regularity of solutions also implies other smoothing effects, measured in other relevant functional spaces. We recall that, for $s\in \R$ and $1\leq p \leq +\infty$, the homogeneous Sobolev space $\dot{W}^{s,p}(\Rt)$ is defined by the norm $\| (-\Delta)^{\frac{s}{2}}(\cdot)\|_{L^p}$. On the other hand, for $s\geq 0$ and $0<\sigma<1$, the H\"older space $\mathcal{C}^{s,\sigma}(\Rt)$ consists of functions $\varphi$ whose fractional derivatives $(-\Delta)^{\frac{s}{2}}\varphi$ are H\"older continuous with parameter $\sigma$.

\begin{Corollaire}\label{Corollary:Holder} Under the same hypotheses as in Theorem \ref{Th:Gevrey}, let $k \in \mathbb{N}$ and assume in addition that the data in equation (\ref{Boussinesq}) satisfy:
	\begin{equation}\label{Assumption-W-k-infty}
	\vf, g, \vvg \in \dot{W}^{-1,\infty}(\Rt)\cap \dot{W}^{k,\infty}(\Rt).
	\end{equation}
	Then the following statements hold.
	\begin{enumerate}
		\item  For any $0\leq s \leq k+2$ and $6\leq p <+\infty$, we have
		\[\vu\in \dot{W}^{s,p}(\Rt), \quad \theta \in \dot{W}^{s,p}(\Rt) \quad \text{and} \quad   P \in \dot{W}^{s,p}(\Rt)+\dot{W}^{\min(s+1,k+1),p}(\Rt). \]
		
		\medskip 
		
		\item  Additionally, for $0\leq s \leq k+1$ and $\sigma:= 1-\frac{3}{p}>0$ (where $6\leq p <+\infty$), we have 
		\[  \vu \in  \mathcal{C}^{s,\sigma}(\Rt), \quad \theta\in  \mathcal{C}^{s,\sigma}(\Rt) \quad \text{and} \quad  P \in \mathcal{C}^{s,\sigma}(\Rt)+\mathcal{C}^{\min(s,k),\sigma}(\Rt). \]
	\end{enumerate}	
\end{Corollaire}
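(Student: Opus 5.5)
The plan is to combine the Gevrey regularity from Theorem \ref{Th:Gevrey} with elliptic (Calderón--Zygmund) regularity and a bootstrap. Theorem \ref{Th:Gevrey} gives $\vu,\theta\in G^1_\rho(\Rt)$. Since $e^{-\rho|\xi|}|\xi|^{m}$ is bounded for every $m\geq 0$, this yields $\vu,\theta\in \dot H^{s}(\Rt)$ for every $s\geq 1$. Interpolating with $\dot H^1$, and using the embedding $\dot H^{1}\cap \dot H^{s}\subset L^\infty$ for $s>\frac32$ together with $\dot H^1\subset L^6$, we obtain
\[
\vu,\theta\in \dot W^{s,p}(\Rt)\quad\text{for all } s\geq 0 \text{ and } 6\leq p<+\infty .
\]
So all the regularity of the solution is available once $p\geq 6$. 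The real restriction in the statement, $s\leq k+2$, therefore comes only from the data and from the terms $\theta\vvg$ and $P_\theta$.

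To make this precise, write the equations as
\[
\vu=(-\Delta)^{-1}\P\big(\vf+\theta\vvg-\text{div}(\vu\otimes\vu)\big),\qquad \theta=(-\Delta)^{-1}\big(g-\text{div}(\theta\vu)\big),
\]
and use that $(-\Delta)^{\frac{s}{2}}(-\Delta)^{-1}=(-\Delta)^{\frac{s-2}{2}}$ together with the $L^p$-boundedness of Riesz transforms and of $\P$ for $1<p<\infty$.

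The nonlinear terms $\text{div}(\vu\otimes\vu)$ and $\text{div}(\theta\vu)$ lie in every $\dot W^{\sigma,p}$. This follows from the Kato--Ponce fractional Leibniz rule, $\|(-\Delta)^{\sigma/2}(ab)\|_{L^p}\lesssim \|(-\Delta)^{\sigma/2}a\|_{L^{p_1}}\|b\|_{L^{p_2}}+\dots$, applied with exponents $p_1,p_2\in[6,\infty]$.

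The linear data terms are where $k$ enters. I expect the main obstacle to be making sense of $(-\Delta)^{\frac{s-2}{2}}\vf$ in $L^p$ from the hypothesis $\vf\in \dot W^{-1,\infty}\cap\dot W^{k,\infty}$. The first step would be to place $\vf$ in $\dot W^{\sigma,p}$ for $-1\leq\sigma\leq k$ and $6\leq p<\infty$ by interpolation, using the Gevrey/$\dot H^{-1}$ information for integrability and the $\dot W^{-1,\infty}\cap \dot W^{k,\infty}$ information for smoothness. Since the family $\dot W^{\cdot,p}$ interpolates with the $L^2$-based Gevrey bound, this should be harmless but technical. This would give $(-\Delta)^{-1}\vf\in \dot W^{s,p}$ for $s\leq k+2$, and similarly for $g$.

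For the buoyancy term $\theta\vvg$, Kato--Ponce applied with $\theta$ smooth and bounded and $\vvg\in\dot W^{\sigma,p}$ for $\sigma\leq k$ gives $\theta\vvg\in\dot W^{\sigma,p}$, hence its contribution to $\vu$ lies in $\dot W^{s,p}$ for $s\leq k+2$.

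The pressure follows from \eqref{Pressure}. The term $P_{\vu}$ is a Riesz-transform image of $\vu\otimes\vu$, which lies in $\dot W^{s,p}$ for all $s$, so $P_{\vu}\in\dot W^{s,p}$. The term $P_\theta=-(-\Delta)^{-1}\text{div}(\theta\vvg)$ gains one derivative over $\theta\vvg$, which places it in $\dot W^{\min(s+1,k+1),p}$.

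For item (2), I would apply Morrey's embedding $\dot W^{1,p}\cap L^\infty\subset \mathcal C^{0,1-3/p}$ for $p>3$ to $(-\Delta)^{s/2}\vu$, using item (1) at order $s$ and $s+1$. The loss of one derivative explains the ranges $s\leq k+1$ for $\vu,\theta$ and $\min(s,k)$ for $P_\theta$. The remaining $L^\infty$ control comes from the Sobolev embedding $\dot W^{s,p}\cap\dot W^{s+1,p}\subset L^\infty$ for $p>3$.
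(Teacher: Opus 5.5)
Your proof is correct, but its key step differs from the paper's. The paper uses the Gevrey bound on the solution only to get $\widehat{\vu},\widehat{\theta}\in L^1(\Rt)$, hence $\vu,\theta\in L^\infty$. It then reaches $\dot W^{k+2,p}$ by induction on $|\alpha|$ in the fixed-point form of the equations (Riesz transforms, Leibniz rule), using \eqref{Assumption-W-k-infty} at each step to control $\vf$, $g$ and $\vvg$; this is what produces the cap $s\le k+2$.

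Your first paragraph bypasses all of this. Since $G^1_\rho\subset \dot H^1\cap\dot H^{t}$ for every $t\ge 1$, the function $(-\Delta)^{s/2}\vu$ lies in $\dot H^{\max(1-s,0)}\cap \dot H^{t}$ for all large $t$, hence in $L^p$ for all $6\le p\le\infty$. This already proves item (1) for $\vu,\theta$ for \emph{every} $s\ge 0$, without using \eqref{Assumption-W-k-infty}. Kato--Ponce plus Riesz transforms then give $P_{\vu}$. Your Morrey-inequality step is essentially the paper's use of Lemma \ref{Lem-Holder-Reg} with $L^p\subset \dot M^{1,p}$.

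As a result, the bootstrap in the middle of your proposal is redundant. Your explanation that the restriction $s\le k+2$ ``comes from the data'' is also not accurate, because the data are Gevrey too. Indeed, $\vf,g\in G^{-1}_r$ gives $|\xi|^{\sigma}\widehat{\vf},\,|\xi|^{\sigma}\widehat{g}\in L^1\cap L^2$ for every $\sigma\ge -1$. Likewise, $\vvg\in G^{\frac{1}{2}}_r$ gives $\vvg\in \dot W^{\sigma,p}$ for every $\sigma\ge 0$ and $3\le p\le\infty$, by the same Fourier-side computation.

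So the step you flag as the main obstacle is not needed. That step was interpolating the $L^2$ Gevrey bound against $\dot W^{-1,\infty}\cap\dot W^{k,\infty}$, which is genuinely delicate at $p=\infty$. You should instead use this direct argument for $\vvg$ when treating $\theta\vvg$; it even yields $P_\theta\in \dot W^{s+1,p}$ for all $s\ge 0$.

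What the paper's route offers in exchange is an argument driven by the elliptic structure, which transfers finite regularity of the data to the solution and would still make sense without a Gevrey bound. Your route is shorter, avoids the endpoint spaces, and gives a strictly stronger conclusion.
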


In (\ref{Assumption-W-k-infty}), the parameter $k$ quantifies the initial regularity of the data and determines the maximal gain of regularity for  solutions. More precisely, one can obtain a gain of order $k+2$ in the Sobolev space framework and of order $k+1$ in the H\"older space framework. This (expected) improvement of regularity arises from the regularizing effects of the Laplacian operator appearing in both equations of the system (\ref{Boussinesq}).

\medskip

Now, we turn our attention to the homogeneous case of the Boussinesq system (\ref{Boussinesq}), assuming from now on that $\vf=g=0$. For clarity of exposition, we rewrite the resulting system:
\begin{equation}\label{Boussinesq-Homogeneous}
\begin{cases}\vspace{2mm}
-\Delta \vu + \text{div}(\vu \otimes \vu) + \vec{\nabla} P = \theta \vec{{\bf g}},  \qquad \text{div}(\vu)=0, \\
-\Delta \theta + \text{div}(\theta \vu) = 0.
\end{cases}  
\end{equation}

In complete analogy with Theorem \ref{Th:Gevrey}, we study the analytical smoothing effects of $\dot{H}^1$-solutions to this system. Nevertheless, in contrast to our previous result, note that assuming the energy control (\ref{Energy-Estimate}) would render the analysis trivial, since we would only be considering the null solution $\vu=0$ and $\theta=0$. To overcome this issue, we remove the energy control (\ref{Energy-Estimate}), and we are able to state the following result for the homogeneous case.

\begin{Theoreme}\label{Th:Gevrey-Homogeneous} For $r>0$ assume that $\vvg \in G^{\frac{1}{2}}_{r}(\Rt)$. Then, for any weak solution $(\vu,\theta)\in \dot{H}^1(\Rt)$ of the homogeneous Boussinesq  system (\ref{Boussinesq-Homogeneous}) associated with $\vvg$, there exists a parameter $0<\varrho<\frac{2r}{3}$, which depends on $\| \vvg \|_{G^{\frac{1}{2}}_r}$, $\|\vu \|_{\dot{H}^1}$, $\| \theta \|_{\dot{H}^1}$ and $r$, such that 
		\begin{equation*}
	\vu \in G^{1}_{\varrho}(\Rt), \quad \theta \in G^{1}_{\varrho}(\Rt)  \quad \text{and}  \quad  P \in G^{\frac{1}{2}}_{\varrho}(\Rt)+ G^1_{\varrho} (\Rt).
	\end{equation*}
	\end{Theoreme}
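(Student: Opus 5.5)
We follow the strategy outlined after Theorem \ref{Th:Gevrey}: view the stationary solution as a time-independent solution of the evolution system (\ref{Boussinesq-Evolution}), now with $\vef=0$ and $\eg=0$, and with a time-dependent gravitational term built from $\vvg$. Concretely, for a parameter $\rho>0$ to be fixed, we look for a mild solution $(\vv,\vartheta)$ on a time interval $[0,T]$ with initial data $(\vu,\theta)\in\dot H^1$, in the space
\[ X_T=\Big\{(\vv,\vartheta):\ \sup_{0<t<T}\big\|e^{\rho\sqrt{t}\sqrt{-\Delta}}\vv(t)\big\|_{\dot H^1}+\sup_{0<t<T}\big\|e^{\rho\sqrt{t}\sqrt{-\Delta}}\vartheta(t)\big\|_{\dot H^1}<+\infty\Big\}, \]
possibly augmented with a $\sqrt{t}\,\|\cdot\|_{\dot H^{3/2}}$-type Kato norm so that the bilinear terms close with the small-time argument. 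The gravitational term is taken as $\veg(t)=\vvg$. Since $\rho\sqrt t\le \rho\sqrt T$, and we will choose $\rho\sqrt T\le \frac{2r}{3}$, it lies in $G^{1/2}_{\rho\sqrt t}$ uniformly.

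The key estimates are of the standard Foias--Temam type. First, we use $e^{\rho\sqrt t|\xi|}e^{-t|\xi|^2}\le C$ uniformly, which gives the Gevrey bound for the linear evolution of $\dot H^1$ data. Second, we use the subadditivity $e^{\rho\sqrt t|\xi|}\le e^{\rho\sqrt t|\xi-\eta|}e^{\rho\sqrt t|\eta|}$. This allows us to treat $e^{\rho\sqrt t\sqrt{-\Delta}}$ applied to $\vv\otimes\vv$, $\vartheta\vv$ and $\vartheta\vvg$ by working on the Fourier moduli $|\widehat{e^{\rho\sqrt t\sqrt{-\Delta}}\vv}|$, after which we apply the usual product laws $\dot H^1\cdot\dot H^1\subset \dot H^{1/2}$ and $\dot H^1\cdot \dot H^{1/2}\subset L^2$-type bounds. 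For the Duhamel terms we need $e^{\rho\sqrt t|\xi|}\le e^{\rho\sqrt s|\xi|}e^{\rho(\sqrt t-\sqrt s)|\xi|}$ together with $e^{\rho(\sqrt t-\sqrt s)|\xi|}e^{-(t-s)|\xi|^2/2}\le C$, using $\sqrt t-\sqrt s\le\sqrt{t-s}$. The linear term $\vartheta\vvg$ is controlled by $T^{1/4}\|\vvg\|_{G^{1/2}_r}\|\vartheta\|_{X_T}$, or by a similar power of $T$, and the term $\vartheta\vv$ in the temperature equation is bilinear.

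The main obstacle, and the difference with Theorem \ref{Th:Gevrey}, is the choice of the existence time. Picard iteration in $X_T$ converges if $T$ is small relative to $\|\vu\|_{\dot H^1}$, $\|\theta\|_{\dot H^1}$ and $\|\vvg\|_{G^{1/2}_r}$. In the non-homogeneous case, the energy control (\ref{Energy-Estimate}) replaced the solution norms by data norms; here no such control is available. We therefore keep the solution norms explicitly. Since $\dot H^1$ is critical, mere smallness of $T$ does not automatically make the nonlinear Duhamel term small for arbitrary $\dot H^1$ data. We handle this by using the Kato auxiliary norm $\sup_t t^{1/4}\|e^{\rho\sqrt t\sqrt{-\Delta}}\vv\|_{\dot H^{3/2}}$: its size on the free evolution of $(\vu,\theta)$ tends to $0$ as $T\to0$. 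Because the initial data are fixed and the free evolution is explicit, we can quantify this through the high-frequency tail of $(\vu,\theta)$. To obtain dependence only on norms, we instead use a subcritical norm, for instance exploiting the fact that the stationary solution gains regularity ($\vu\in \dot H^{1}\cap\dot H^{s}$ by bootstrapping the elliptic equations), so that a genuine power $T^\delta$ appears. This bootstrap step, making $T$ depend only on $\|\vu\|_{\dot H^1}$, $\|\theta\|_{\dot H^1}$, $\|\vvg\|_{G^{1/2}_r}$ and $r$, is the step we expect to be most delicate.

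Once the fixed point is obtained, $(\vu,\theta)$ is itself a mild solution of this evolution system with the same data, since it is stationary. Uniqueness of the Fujita--Kato solution in $\mathcal C_t\dot H^1_x$ then gives $(\vv,\vartheta)=(\vu,\theta)$. Evaluating at $t=T$ yields $\vu,\theta\in G^1_{\varrho}$ with $\varrho=\rho\sqrt T<\frac{2r}{3}$, where the factor $\frac23$ comes from the same interpolation constraint as in Theorem \ref{Th:Gevrey}. Finally, we apply (\ref{Pressure}) together with the same product estimates: $P_{\vu}=(-\Delta)^{-1}\diver\diver(\vu\otimes\vu)\in G^{1/2}_\varrho$ and $P_\theta\in G^1_\varrho$, using $\vvg\in G^{1/2}_r\subset G^{1/2}_\varrho$.
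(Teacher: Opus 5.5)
Your architecture matches the paper's. You regard $(\vu,\theta)$ as a time-independent mild solution of (\ref{Boussinesq-Evolution}) with $\vef=\eg=0$ and $\veg=\vvg$, and run Picard in a space weighted by $e^{\rho\sqrt{t}\sqrt{-\Delta}}$. You then identify the two solutions by uniqueness in $\mathcal{C}_t\dot{H}^1_x$, evaluate at $t=T$, and read off the pressure from (\ref{Pressure}). The gap is in the step you single out as the main obstacle. You first note, correctly, that Picard in $X_T$ converges for $T$ small relative to the norms. You then assert that $\dot H^1$ is critical and abandon that route. In its place you use a Kato auxiliary norm, whose smallness as $T\to0$ depends on the \emph{profile} of $(\vu,\theta)$ and not only on its norms, together with an elliptic bootstrap that you describe but do not carry out. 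As written, the proposal therefore never establishes the estimate that determines $\varrho$: a weighted fixed point on $[0,T]$ with $T$ depending only on $\|\vu\|_{\dot H^1}$, $\|\theta\|_{\dot H^1}$, $\|\vvg\|_{G^{\frac12}_r}$ and $r$.

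The premise is false, and the fix is to drop the detour. Under $\vu_\lambda(x)=\lambda\vu(\lambda x)$ one has $\|\vu_\lambda\|_{\dot H^1}=\lambda^{\frac12}\|\vu\|_{\dot H^1}$. The critical space is $\dot H^{\frac12}(\Rt)$, so $\dot H^1(\Rt)$ is subcritical and the Duhamel terms gain a positive power of $T$ uniformly on bounded sets. The ingredients are:
\begin{itemize}
\item your splitting $e^{\rho\sqrt t|\xi|}\le e^{\rho(\sqrt t-\sqrt\tau)|\xi|}e^{\rho\sqrt\tau|\xi|}$ together with $e^{\rho(\sqrt t-\sqrt\tau)|\xi|}e^{-\frac{(t-\tau)|\xi|^2}{2}}\le e^{\frac{\rho^2}{2}}$;
\item the kernel bound $\||\xi|^{\frac32}e^{-\frac{(t-\tau)|\xi|^2}{2}}\|_{L^\infty}\le C(t-\tau)^{-\frac34}$;
\item the product law $\dot H^1\cdot\dot H^1\subset\dot H^{\frac12}$, applied to the inverse Fourier transforms of the weighted moduli.
\end{itemize}
Together these give
\[
\Big\|e^{\rho\sqrt{t}\sqrt{-\Delta}}\int_0^t e^{(t-\tau)\Delta}\P\,\diver(\vv\otimes\vv)\,d\tau\Big\|_{\dot H^1}\le Ce^{\frac{\rho^2}{2}}\int_0^t(t-\tau)^{-\frac34}\big\|e^{\rho\sqrt{\tau}\sqrt{-\Delta}}\vv(\tau)\big\|_{\dot H^1}^2\,d\tau\le C_\rho\,T^{\frac14}\,\|(\vv,\vartheta)\|_{X_T}^2 .
\]
The term $\diver(\vartheta\vv)$ is handled identically. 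The $\vartheta\vvg$ term is bounded by $C_\rho T^{\frac12}\|\vvg\|_{G^{\frac12}_r}\|(\vv,\vartheta)\|_{X_T}$, via $L^6\cdot L^3\subset L^2$, provided $\rho\sqrt T\le r$. The free part is bounded by $e^{\frac{\rho^2}{4}}(\|\vu\|_{\dot H^1}+\|\theta\|_{\dot H^1})$.

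Theorem \ref{Th:Picard} then closes for $T\simeq\min(1,\delta^{-4},\eta^{-2})$, with $\delta\simeq C_\rho(\|\vu\|_{\dot H^1}+\|\theta\|_{\dot H^1})$ and $\eta\simeq\|\vvg\|_{G^{\frac12}_r}$. This is exactly the paper's $T_0$, $T_1$; the fourth power in (\ref{T0}) is the subcritical gain $T^{\frac14}$. No Kato norm or bootstrap is needed.

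Two side remarks. First, your handling of the time weight inside the Duhamel integral is correct, and more careful than the paper's Lemma \ref{Lem-Tech-Bilinear-FK-Gevrey}. That lemma applies $e^{r\sqrt{t}|\xi|}$ to the integrand at time $\tau$ and identifies the result with the $F_T$-norm at time $\tau$. Second, in the homogeneous case the factor $\frac23$ is not an interpolation constraint, since there are no forces. It is only a choice of $\rho$ that ensures $\rho\sqrt T\le r$ and matches the statement.
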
	

In contrast to Theorem \ref{Th:Gevrey}, the obtained radius of analyticity $\rho$ is no longer uniform and depends on the $\dot{H}^1$-norm of the solutions. For this fundamental reason, we have decided to state each theorem separately, even though the strategy of the proof is the same as presented above. Of course, Corollary \ref{Corollary:Holder} also holds for the system (\ref{Boussinesq-Homogeneous}).

\medskip

As already mentioned and to the best of our knowledge, the uniqueness of $\dot{H}^1$-solutions, the so-called Liouville-type problem, remains out of reach for both the coupled system (\ref{Boussinesq-Homogeneous}) and the stationary homogeneous Navier--Stokes equations (\ref{Navier-Stokes}) when $\vf=0$. 

\medskip

Briefly, multiplying the first equation in (\ref{Boussinesq-Homogeneous}) by $\vu$ and the second equation by $\theta$, and using the divergence-free property of $\vu$, an integration by parts \emph{formally} yields
\begin{equation}\label{Identities-Liouville}
\int_{\Rt} \text{div}(\vu \otimes \vu)\cdot \vu \, dx=0, 
\qquad 
\int_{\Rt} \text{div}(\theta \vu)\, \theta \, dx = 0,
\end{equation}
from which
\[
\int_{\Rt} |\vec{\nabla}\otimes \vu|^2\, dx =0 
\quad \text{and} \quad 
\int_{\Rt} |\vec{\nabla}\theta|^2 \, dx = 0,
\]
suggesting that $\vu, \theta \in \dot{H}^1(\Rt)$ must satisfy $\vu=0$ and $\theta = 0$. Nevertheless, the sole assumption that $\theta, \vu \in \dot{H}^1(\Rt)$ does not seem sufficient to rigorously justify the identities in (\ref{Identities-Liouville}),  and this fact makes the Liouville-type problem in the $\dot{H}^1$-space \emph{a difficult open problem}.
 
\medskip

For the particular case of the Navier--Stokes equations (\ref{Navier-Stokes}), there is a vast amount of literature devoted to the study of the Liouville-type problem in very different functional settings; see \emph{e.g.} \cite{Chae,Chamorro-Jarrin-Lemarie,Jarrin1,Jarrin2,Seregin} and the references therein. Most of these works \emph{do not include} the space $\dot{H}^1(\Rt)$ due to the difficulties mentioned above. In contrast, despite the large variety of these functional settings, the main idea behind them is that \emph{a priori decay properties of solutions in the space variable}, commonly characterized by Lebesgue, Lorentz, Morrey, and related spaces, imply the identity $\vu=0$ for smooth $\mathcal{C}^2$-solutions of (\ref{Navier-Stokes}).

\medskip

In this context, our second main contribution is to provide an application of the Gevrey smoothing effect to the study of the Liouville-type problem for the system $(\ref{Boussinesq-Homogeneous})$, and, as a by-product, for the equation $(\ref{Navier-Stokes})$, in the critical space $\dot{H}^1(\Rt)$.

\medskip

In our next result, differing from the previously mentioned works, we show that $\dot{H}^1$-solutions to (\ref{Boussinesq-Homogeneous}) vanish identically, provided that an \emph{a priori control on the frequency variable} holds.

 \begin{Theoreme}\label{Th:Liouville}
	Let $(\vu, \theta)\in \dot{H}^1(\Rt)$ be a weak solution of the homogeneous Boussinesq system (\ref{Boussinesq-Homogeneous}), associated with the gravitational acceleration $\vvg \in G^{\frac{1}{2}}_r(\Rt)$ for some $r>0$.
	
	\medskip
	
	For any $k \in \mathbb{N}$, define the dyadic annulus
	\begin{equation}\label{C_k}
		\mathscr{C}_k:= \left\{ \xi \in \Rt : 2^{-(k+1)}\leq |\xi| \leq 2^{-k} \right\}.
	\end{equation}
	Let $C>0$ be a generic constant independent of $k$. If it holds that
	\begin{equation}\label{Assumption-Liouville}
	\left\| \widehat{\vu} \right\|_{L^\infty(\mathscr{C}_k)}
	+
	\left\| \widehat{\theta} \right\|_{L^\infty(\mathscr{C}_k)}
	\leq C\, 2^k,
	\end{equation}
	then we have $\vu=0$ and $\theta=0$.
\end{Theoreme}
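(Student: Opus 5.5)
By Theorem \ref{Th:Gevrey-Homogeneous}, $\vu,\theta\in G^1_\varrho(\Rt)$ for some $\varrho>0$. The plan is to use this Gevrey regularity to justify the formal energy identities (\ref{Identities-Liouville}). Once they hold, testing the temperature equation against $\theta$ gives $\|\vec\nabla\theta\|_{L^2}^2=0$, so $\theta=0$ (recall $\dot H^1\subset L^6$, so constants are excluded). The velocity equation then reduces to stationary Navier--Stokes with $\vf=0$, and testing against $\vu$ gives $\vu=0$.

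The difficulty is at low frequencies, not high ones. Gevrey regularity makes $\widehat{\vu}$ and $\widehat\theta$ decay exponentially at infinity, so they are smooth and lie in $L^\infty\cap\dot H^s$ for every $s\geq 1$. What remains is to show that the trilinear terms $\int \text{div}(\vu\otimes\vu)\cdot\vu\,dx$ and $\int \text{div}(\theta\vu)\theta\,dx$ are well defined and that the integrations by parts producing zero are legitimate.

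First I convert (\ref{Assumption-Liouville}) into integrability of $\widehat{\vu}$ near the origin. On $\mathscr{C}_k$ we have $|\xi|\sim 2^{-k}$, so the assumption reads $|\widehat{\vu}(\xi)|\lesssim |\xi|^{-1}$ for $|\xi|\leq 1$, and likewise for $\widehat\theta$. Since $|\xi|^{-1}$ is integrable near $0$ in $\Rt$, we get $\widehat{\vu}\in L^1(|\xi|\le1)$. Combined with Gevrey decay at high frequencies, this gives $\widehat{\vu},\widehat\theta\in L^1(\Rt)$, so $\vu,\theta$ are bounded and continuous. More importantly, $|\widehat{\vu}|^2\lesssim|\xi|^{-2}$ is also integrable near $0$ in dimension three, so $\widehat{\vu}\in L^2$ and $\vu,\theta\in L^2(\Rt)$. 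Together with $\dot H^1$ and Gevrey control this yields $\vu,\theta\in H^s(\Rt)$ for all $s\ge0$, and in particular in $L^2\cap L^\infty$ with gradients in $L^2\cap L^\infty$.

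Once $\vu,\theta\in H^1\cap W^{1,\infty}$, the pairings are standard. Take a cutoff $\varphi_R(x)=\varphi(x/R)$ and test the equations with $\varphi_R\vu$ and $\varphi_R\theta$. The nonlinear terms become $-\frac12\int|\vu|^2\,\vu\cdot\vec\nabla\varphi_R\,dx$ and $-\frac12\int\theta^2\,\vu\cdot\vec\nabla\varphi_R\,dx$, and the pressure term becomes $-\int P\,\vu\cdot\vec\nabla\varphi_R\,dx$. Since $\vu\in L^2\cap L^\infty$ we have $|\vu|^3\in L^1$, so these terms vanish as $R\to\infty$. For the pressure, $P_{\vu}\in L^2$ by Calderón--Zygmund since $\vu\otimes\vu\in L^2$, and $P_{\vu}\vu\in L^1$. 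The buoyancy term $\int\theta\vvg\cdot\vu\,dx$ is harmless because $\theta=0$ is obtained first from the decoupled temperature equation. The Laplacian terms give $\int\varphi_R|\vec\nabla\vu|^2\,dx$ plus a correction that vanishes as $R\to\infty$. Letting $R\to\infty$ gives $\|\vec\nabla\theta\|_{L^2}=0$, then $\|\vec\nabla\vu\|_{L^2}=0$, and hence $\vu=\theta=0$ by the $L^6$ embedding.

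The main obstacle is the low-frequency step: turning the annular $L^\infty$ bound into the integrability statements above. It is elementary but it is the only place where the hypothesis enters, and the exponent $2^k$ is exactly borderline for $L^2$ integrability. The $L^2$ step $\sum_k 2^{2k}|\mathscr C_k|\sim\sum_k 2^{2k}2^{-3k}<\infty$ holds with room to spare. The alternative route, controlling the pressure only through $P_{\vu}\in\dot H^{1/2}$, needs more care, so I would rely on $L^2$ membership.
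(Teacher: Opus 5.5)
Your proof is correct, but it extracts a different and stronger consequence of the low-frequency hypothesis than the paper does.

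\textbf{The paper's route.} The paper uses (\ref{Assumption-Liouville}) only through $\bigl\| |\xi|^{-1}(\widehat{\vu},\widehat{\theta})\bigr\|_{L^1(|\xi|\le 1)}<+\infty$. Together with $\widehat{\vu},\widehat{\theta}\in L^1$ (from the Gevrey bound), this places $(\vu,\theta)$ in $\dot{B}^{-1}_{\infty,\infty}$. The improved Sobolev inequality of G\'erard--Meyer--Oru then gives $(\vu,\theta)\in L^4$. From there, $\theta=0$ follows from the energy estimate, and $\vu=0$ is obtained by citing the $L^4$ Liouville theorem of Chamorro--Jarr\'in--Lemari\'e-Rieusset.

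\textbf{Your route.} You instead square the annular bound and get $\widehat{\vu},\widehat{\theta}\in L^2(|\xi|\le 1)$, hence $\vu,\theta\in L^2$ by Plancherel. The energy identities then become a routine cutoff computation. This needs neither the improved Sobolev inequality nor an external Liouville theorem, so it is more elementary and self-contained.

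\textbf{What each buys.} The paper's route is more robust with respect to the hypothesis. Its $\dot{B}^{-1}_{\infty,\infty}$ step needs only $\sum_k 2^{k}\,\|\widehat{\vu}\|_{L^\infty(\mathscr{C}_k)}\,2^{-3k}<+\infty$, so it would tolerate growth up to $2^{(2-\varepsilon)k}$ in (\ref{Assumption-Liouville}). Your $L^2$ step needs $\sum_k \|\widehat{\vu}\|^2_{L^\infty(\mathscr{C}_k)}\,2^{-3k}<+\infty$, i.e.\ growth strictly below $2^{3k/2}$. For the same reason, your remark that $2^k$ is ``exactly borderline'' for $L^2$ integrability is inaccurate: the threshold is $2^{3k/2}$, as your own computation $\sum_k 2^{-k}<+\infty$ shows. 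This does not affect the argument.

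Your route also shows that the Gevrey regularity is essentially dispensable here. Since $L^2\cap\dot{H}^1\subset L^3$, the quantities $|\vu|^3$, $\theta^2|\vu|$ and $P_{\vu}\vu$ (with $P_{\vu}\in L^{3/2}$ by Calder\'on--Zygmund) are already integrable. So the $L^\infty$ and smoothness information you take from Theorem \ref{Th:Gevrey-Homogeneous} only serves to make the test-function manipulations classical rather than requiring an approximation argument.
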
 

We now briefly explain the idea of the proof in connection with the previous result. The hypothesis that $(\vu, \theta)\in \dot{H}^1(\Rt)$ and $\vvg \in G^{\frac{1}{2}}_r(\Rt)$ directly implies that $(\vu, \theta)\in G^1_\rho(\Rt)$ thanks to Theorem \ref{Th:Gevrey-Homogeneous}. This information, together with the \emph{low-frequency control} given in assumption (\ref{Assumption-Liouville}), allows us to derive sharp estimates showing that $(\vu, \theta)\in \dot{B}^{-1}_{\infty,\infty}(\Rt)$.

\medskip

 This homogeneous Besov space plays an important role in the theoretical study of the Navier--Stokes equations (both in the stationary and evolution cases), as it is the largest scale-invariant space for these equations; see \cite{Bourgain,PGLR1} for further references. 

\medskip 
 
Returning to our study, the fact that $(\vu, \theta)\in \dot{H}^1(\Rt)\cap \dot{B}^{-1}_{\infty,\infty}(\Rt)$ now justifies the identities in (\ref{Identities-Liouville}), yielding $\vu=0$ and $\theta=0$.  
 
\medskip

The \emph{frequency control} on $\dot{H}^1$-solutions seems to be a new idea for exploring the Liouville-type problem. To the best of our knowledge, this type of tool has only been used in the recent work \cite{Tan}. Among other interesting results, the Liouville-type problem is solved for $\dot{H}^1$-solutions of the Navier--Stokes equations satisfying 
\begin{equation}\label{Assumption-Liouville-Tan}
\liminf_{k\to -\infty} 2^{-k} \left\| \dot{S}_k \vu \right\|_{L^\infty(\Rt)}<+\infty,
\end{equation}
where, for any $k \in \mathbb{Z}$, the operator $\dot{S}_k$ denotes the standard low-frequency cut-off operator in the homogeneous Littlewood--Paley decomposition. See \cite[Theorem $1.1$ and Corollary $1.1$]{Tan} for details. 

\medskip

In the particular case when $\theta\equiv 0$ and $\vvg \equiv 0$, Theorem \ref{Th:Liouville} directly applies to the Navier--Stokes equations (\ref{Navier-Stokes}) provided that any weak $\dot{H}^1$-solution satisfies (\ref{Assumption-Liouville}). Comparing this assumption with (\ref{Assumption-Liouville-Tan}), the main difference is that in (\ref{Assumption-Liouville}) we consider the $L^\infty$-norm in the Fourier variable, while in (\ref{Assumption-Liouville-Tan}) this norm is taken in the space variable. Moreover, in (\ref{Assumption-Liouville-Tan}), when $k\to -\infty$ and consequently $2^{-k}\to +\infty$, this control essentially imposes a \emph{fast decay} of $\|\dot{S}_k \vu\|_{L^\infty(\Rt)}$. Conversely, in (\ref{Assumption-Liouville}) we allow the quantity $\|\widehat{\vu}\|_{L^\infty(\mathscr{C}_k)}$ to \emph{grow} like $2^k$ as $k\to +\infty$.

\medskip

{\bf Some conclusions and possible future research}.  As already mentioned, when $\theta\equiv 0$ and $\vvg \equiv 0$ all the results stated above hold in the particular framework of the stationary Navier--Stokes equation (\ref{Navier-Stokes}), providing new regularity criteria and a Liouville-type result for this equation.

\medskip

Within the setting of the Boussinesq system (\ref{Boussinesq}), we highlight that the gravitational acceleration vector $\vvg$ plays an important role in the present study through suitable decay and regularity assumptions on this term. It is therefore natural to ask what happens when this term is replaced by the fixed vertical vector $\vec{e}_3:=(0,0,1)$, which is also commonly used in Boussinesq models. To the best of our knowledge, the answer to this question is not trivial in the case of the whole space $\Rt$. 

\medskip

The ideas presented above to study the Gevrey smoothing effect of stationary solutions could be adapted to other relevant physical models in fluid dynamics. Some of them were mentioned in the Setting  section.  From a mathematical point of view, we believe that the visco-elastic second-grade fluid model studied in \cite{Paicu} is of particular interest, since, as highlighted in that work, the structure of this model presents weaker regularizing effects, leading to new difficulties.

\medskip

On the other hand, as pointed out in \cite{Tan}, it is also of interest to study new Liouville-type results under frequency assumptions such as (\ref{Assumption-Liouville}) and (\ref{Assumption-Liouville-Tan}) for the fractional version of the Navier--Stokes equations and related models.  

\medskip

{\bf Organization of the article}.   In Section \ref{Sec:Prelimilnaries} we summarize some well-known useful facts. Section \ref{Sec:Grevrey-non-homogenous} is devoted to the proof of Theorem \ref{Th:Gevrey} and Corollary \ref{Corollary:Holder}, while in  Section \ref{Sec:Homogeneous}  we give a short proof of Theorem \ref{Th:Gevrey-Homogeneous}. Finally, Section \ref{Sec:Liouville} is devoted to the proof of Theorem \ref{Th:Liouville}.

\section{Preliminaries}\label{Sec:Prelimilnaries}

In this section we summarize some well-known results that will be used in the sequel. We begin by stating some heat kernel estimates. For a proof, see \cite[Lemma $7.2$]{PGLR1}.

\begin{Lemme}[Heat kernel estimates]\label{Lem-Tech:Heat-Estimate}  
	The following estimates hold.
	\begin{enumerate}
		\item Let $\varphi \in L^{1}_{loc}([0,+\infty[, \dot{H}^1(\Rt))$. For any $t>0$, we have
		\begin{equation*}
		\left\| \int_{0}^{t} e^{(t-\tau)\Delta} \varphi(\tau,\cdot)d \tau\right\|_{\dot{H}^1} 
		\leq  C \int_{0}^{t}\| \varphi(\tau,\cdot)\|_{\dot{H}^1} d \tau.
		\end{equation*}
		
		\item Let $\varphi \in L^{2}_{loc}([0,+\infty[, L^2(\Rt))$. For any $t>0$, we have
		\begin{equation*}
		\left\| \int_{0}^{t} e^{(t-\tau)\Delta} \varphi(\tau,\cdot)d \tau\right\|_{\dot{H}^1} 
		\leq  C\, \left( \int_{0}^{t}  \| \varphi(\tau,\cdot)\|^{2}_{L^2} d \tau  \right)^{\frac{1}{2}}.
		\end{equation*}
	\end{enumerate}
\end{Lemme}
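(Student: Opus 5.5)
The plan is to work entirely on the Fourier side, since both operators involved are Fourier multipliers. Writing $\mathcal{F}$ for the Fourier transform and using Plancherel, the $\dot H^1$-norm is $\| |\xi|\,\widehat{\varphi}\|_{L^2}$ up to a constant. The heat semigroup acts as multiplication by $e^{-(t-\tau)|\xi|^2}$. Setting $\Phi(t,\xi):=\int_0^t e^{-(t-\tau)|\xi|^2}\,\widehat{\varphi}(\tau,\xi)\,d\tau$, the Duhamel integral $\int_0^t e^{(t-\tau)\Delta}\varphi(\tau,\cdot)\,d\tau$ has Fourier transform $\Phi(t,\xi)$, so both estimates reduce to weighted $L^2_\xi$ bounds on $|\xi|\,\Phi(t,\xi)$.

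For item (1), I would use Minkowski's integral inequality: the $\dot H^1$-norm of a Bochner integral is at most the integral of the norms. Since $|e^{-(t-\tau)|\xi|^2}|\le 1$, the operator $e^{(t-\tau)\Delta}$ is a contraction on $\dot H^1$. This gives the bound with $C=1$. The hypothesis $\varphi\in L^1_{loc}([0,+\infty[,\dot H^1)$ is exactly what makes the Bochner integral well defined.

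For item (2), which is the genuine smoothing estimate, I would fix $\xi$ and apply Cauchy--Schwarz in $\tau$:
\[
|\xi|\,|\Phi(t,\xi)|\le |\xi|\Big(\int_0^t e^{-2(t-\tau)|\xi|^2}d\tau\Big)^{\frac12}\Big(\int_0^t|\widehat{\varphi}(\tau,\xi)|^2d\tau\Big)^{\frac12}.
\]
The first factor satisfies
\[
|\xi|^2\int_0^t e^{-2(t-\tau)|\xi|^2}\,d\tau=\frac{1-e^{-2t|\xi|^2}}{2}\le\frac12,
\]
uniformly in $t$ and $\xi$, so the gain of one derivative exactly compensates the loss. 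Squaring, integrating in $\xi$, and using Fubini together with Plancherel then gives
\[
\Big\|\int_0^t e^{(t-\tau)\Delta}\varphi\,d\tau\Big\|^2_{\dot H^1}\le \tfrac12\int_0^t\|\varphi(\tau)\|_{L^2}^2\,d\tau .
\]

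I expect no real obstacle. The only care needed is to justify measurability and the use of Fubini for $\widehat{\varphi}(\tau,\xi)$ on $[0,t]\times\Rt$. This holds because $\varphi\in L^2_{loc}L^2$, so $\widehat{\varphi}\in L^2([0,t]\times\Rt)$, and the same identification of the Duhamel integral with $\Phi$ applies. The key point to highlight is the time-integrated bound $|\xi|^2\int_0^t e^{-2s|\xi|^2}ds\le \frac12$, which is the maximal-regularity mechanism behind (2).
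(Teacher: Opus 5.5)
Your argument is correct and complete. Item (1) follows from the contraction property of $e^{(t-\tau)\Delta}$ on $\dot{H}^1$ and Minkowski's inequality, with $C=1$. Item (2) follows from the pointwise Cauchy--Schwarz bound in $\tau$, the identity $|\xi|^2\int_0^t e^{-2(t-\tau)|\xi|^2}\,d\tau=\frac{1-e^{-2t|\xi|^2}}{2}\le\frac12$, and Tonelli and Plancherel, with $C=1/\sqrt{2}$.

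The paper gives no proof of this lemma and only refers to \cite[Lemma 7.2]{PGLR1}. I cannot confirm the argument used in that reference, but your Fourier-multiplier computation is the standard one for such estimates and fully establishes the statement.
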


Next, we state the following fractional version of the Leibniz rule, known as the Kato--Ponce inequality. A proof can be found in \cite{Grafakos,Naibo}.

\begin{Lemme}[Fractional Leibniz rule]\label{Leibniz-rule} 
	Let $s>0$, $1<p<+\infty$ and $1<p_0, p_1, q_0, q_1 \leq +\infty$. Then there exists a constant $C>0$ such that
	\begin{equation*}
	\left\| (-\Delta)^{\frac{s}{2}}(\varphi_1\, \varphi_2) \right\|_{L^p}
	\leq 
	C\, \left\| (-\Delta)^{\frac{s}{2}} \varphi_1 \right\|_{L^{p_1}}\, \| \varphi_2 \|_{L^{p_2}}
	+
	C\, \| \varphi_1 \|_{L^{q_1}}\, \left\| (-\Delta)^{\frac{s}{2}} \varphi_2 \right\|_{L^{q_2}},
	\end{equation*}
	where $\frac{1}{p}=\frac{1}{p_1}+\frac{1}{p_2}=\frac{1}{q_1}+\frac{1}{q_2}$.
\end{Lemme}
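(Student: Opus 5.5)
The plan is the standard paraproduct proof of the Kato--Ponce inequality. Throughout we read the exponents as $p_1,p_2,q_1,q_2\in\,]1,+\infty]$ with the stated H\"older relations. Fix a homogeneous Littlewood--Paley decomposition $\varphi=\sum_{j\in\mathbb Z}\dot\Delta_j\varphi$, with $\dot S_j=\sum_{k\le j-2}\dot\Delta_k$. Write Bony's decomposition
\[
\varphi_1\varphi_2=T_{\varphi_2}\varphi_1+T_{\varphi_1}\varphi_2+R(\varphi_1,\varphi_2),
\]
where
\[
T_{\varphi_2}\varphi_1=\sum_j \dot S_{j}\varphi_2\,\dot\Delta_j\varphi_1,\qquad R(\varphi_1,\varphi_2)=\sum_{|j-j'|\le1}\dot\Delta_j\varphi_1\,\dot\Delta_{j'}\varphi_2 .
\]
Each term will be bounded in $L^p$ after applying $(-\Delta)^{s/2}$. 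The two paraproducts give the two terms on the right-hand side, and the remainder can be assigned to either one.

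For the paraproduct $T_{\varphi_2}\varphi_1$, the $j$-th piece has Fourier support in an annulus of size $2^j$. Hence $(-\Delta)^{s/2}$ acts on it like $2^{js}$ and can be transferred onto $\dot\Delta_j\varphi_1$: we write $(-\Delta)^{s/2}(\dot S_j\varphi_2\,\dot\Delta_j\varphi_1)$ as a bilinear expression in $\dot S_j\varphi_2$ and $\widetilde{\Delta}_j(-\Delta)^{s/2}\varphi_1$, where $\widetilde\Delta_j$ is a modified frequency localization. This is a bilinear Fourier multiplier of Coifman--Meyer type, with symbol smooth away from the origin and satisfying the Mikhlin-type bounds. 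The Coifman--Meyer theorem, or directly the square-function and maximal-function estimate
\[
\Big\|\Big(\sum_j |\dot S_j\varphi_2|^2|\widetilde\Delta_j(-\Delta)^{s/2}\varphi_1|^2\Big)^{1/2}\Big\|_{L^p}\le \|M\varphi_2\|_{L^{p_2}}\Big\|\Big(\sum_j|\widetilde\Delta_j(-\Delta)^{s/2}\varphi_1|^2\Big)^{1/2}\Big\|_{L^{p_1}},
\]
then gives $\|(-\Delta)^{s/2}\varphi_1\|_{L^{p_1}}\|\varphi_2\|_{L^{p_2}}$. Here we use Littlewood--Paley theory for $1<p_1<\infty$ and the Hardy--Littlewood maximal function for $1<p_2\le\infty$. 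The symmetric paraproduct is handled identically and gives the $q$-term.

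The main obstacle is the remainder $R$ and the endpoint exponents equal to $+\infty$. In $R$ the pieces $\dot\Delta_j\varphi_1\dot\Delta_{j'}\varphi_2$ have Fourier support only in balls of radius about $2^j$, not annuli, so $(-\Delta)^{s/2}$ cannot be moved freely onto one factor. The plan is to decompose $\dot\Delta_k R$ and use
\[
\dot\Delta_k R=\sum_{j\ge k-3}\dot\Delta_k(\dot\Delta_j\varphi_1\,\widetilde\Delta_j\varphi_2),
\]
which yields the factor $2^{ks}=2^{(k-j)s}2^{js}$. The condition $s>0$ is exactly what makes $\sum_{j\ge k}2^{(k-j)s}$ summable, so we obtain a bound in terms of the vector-valued estimate of $(2^{js}|\dot\Delta_j\varphi_1|)_j$ times $M\varphi_2$, via the Fefferman--Stein inequality. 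The endpoint case $p_1=\infty$ (or $q_2=\infty$) cannot use the square function in $L^\infty$. There I would instead rely on the arrangement that the $L^\infty$-norm always falls on the undifferentiated factor, controlled by the maximal function. In the case where the derivative falls in $L^\infty$, I would cite the Grafakos--Oh / Bourgain--Li endpoint argument from \cite{Grafakos,Naibo} rather than reprove it, since that is the genuinely delicate part.
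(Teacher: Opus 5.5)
Your sketch is correct and matches what the paper does: the paper gives no proof of this lemma and simply cites \cite{Grafakos,Naibo}, whose arguments follow this same Bony-paraproduct scheme (Coifman--Meyer or square/maximal-function bounds for the two paraproducts, and $s>0$ plus Fefferman--Stein for the resonant term), so deferring the endpoint cases to those references is consistent with the paper. One small remark: in the present range $1<p<+\infty$, the case where the derivative sits in $L^\infty$ is not actually the delicate one, since Coifman--Meyer already gives $L^\infty\times L^{p}\to L^{p}$ for the paraproduct, and in the remainder the pointwise bound $2^{js}|\dot\Delta_j\varphi_1|\lesssim\|(-\Delta)^{s/2}\varphi_1\|_{L^\infty}$ reduces everything to Fefferman--Stein applied to $\varphi_2$; also, the Bourgain--Li endpoint concerns the target space $L^\infty$, which the lemma excludes.
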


Finally, we shall use the following result linking Morrey spaces and the Hölder regularity of functions. For a proof see \cite[Proposition $3.4$]{GigaMiyakawa}. Recall that for $1\leq p < +\infty$ the homogeneous Morrey space $\dot{M}^{1,p}(\Rt)$ is defined as the space of locally finite Borel measures $d\mu$ such that
\begin{equation}\label{Morrey}
\sup_{x_0 \in \Rt,\, R>0} 
R^{\frac{3}{p}} 
\left( 
\frac{1}{| B(x_0, R)|} 
\int_{B(x_0,R)} d| \mu | (x) 
\right)
<+\infty.
\end{equation}

\begin{Lemme}[H\"older regularity]\label{Lem-Holder-Reg} 
	Let $\varphi \in \mathcal{S}'(\Rt)$ such that $\vec{\nabla} \varphi \in \dot{M}^{1,p}(\Rt)$, with $p>3$. There exists a constant $C>0$ such that for all $x,y\in \Rt$ we have
	\[
	|\varphi(x)-\varphi(y)|
	\leq 
	C\, \| \vec{\nabla} \varphi \|_{\dot{M}^{1,p}} 
	|x-y|^{1-3/p}.
	\]
\end{Lemme}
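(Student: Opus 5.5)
The plan is to prove the lemma by the classical Morrey--Campanato argument: a potential estimate for $\varphi$ in terms of $|\vec{\nabla}\varphi|$, followed by a dyadic decomposition that uses the Morrey bound. Write $d\mu=\vec{\nabla}\varphi$ and $M=\|\vec{\nabla}\varphi\|_{\dot{M}^{1,p}}$. Definition (\ref{Morrey}) gives, for every ball,
\[ |\mu|(B(x_0,R))\leq C\, M\, R^{3-\frac{3}{p}} . \]

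\emph{Step 1 (reduction to smooth $\varphi$).} Let $\eta_\varepsilon$ be a standard mollifier and set $\varphi_\varepsilon=\varphi*\eta_\varepsilon$, so that $\vec{\nabla}\varphi_\varepsilon=\mu*\eta_\varepsilon$. Since $|\mu*\eta_\varepsilon|(B(x_0,R))\leq \sup_{|z|\leq\varepsilon}|\mu|(B(x_0-z,R))$, the Morrey norm does not increase: $\|\vec{\nabla}\varphi_\varepsilon\|_{\dot{M}^{1,p}}\leq M$. If the Hölder estimate holds for each smooth $\varphi_\varepsilon$ with constant $CM$, then the family $\varphi_\varepsilon-\varphi_\varepsilon(0)$ is locally equicontinuous and vanishes at $0$. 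By Arzelà--Ascoli it has a subsequence converging locally uniformly to a continuous $\psi$ satisfying the same estimate. Since $\varphi_\varepsilon\to\varphi$ in $\mathcal{S}'$, the constants $\varphi_\varepsilon(0)$ must converge, and hence $\varphi$ equals $\psi$ plus a constant. The statement is therefore read for this continuous representative.

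\emph{Step 2 (potential estimate).} For smooth $\varphi$ and a ball $B=B(x_0,R)$ containing $x$, the standard estimate obtained by integrating along rays from $x$ is
\[ |\varphi(x)-\varphi_B|\leq C\int_{B}\frac{|\vec{\nabla}\varphi(z)|}{|x-z|^{2}}\,dz , \]
where $\varphi_B$ denotes the mean of $\varphi$ over $B$. Since $B\subset B(x,2R)$, we split $B(x,2R)$ into the dyadic shells $A_j=\{2^{-j}R\leq |x-z|<2^{-j+1}R\}$, $j\geq 0$. On $A_j$ the weight $|x-z|^{-2}$ is at most $(2^{-j}R)^{-2}$, and the mass is at most $|\mu|(B(x,2^{-j+1}R))\leq CM(2^{-j}R)^{3-\frac{3}{p}}$. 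Summing over $j$ gives
\[ \int_{B(x,2R)}\frac{d|\mu|(z)}{|x-z|^{2}}\leq CM\,R^{1-\frac{3}{p}}\sum_{j\geq0}2^{-j(1-\frac{3}{p})}\leq C M R^{1-\frac{3}{p}} . \]
The geometric series converges exactly because $p>3$.

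\emph{Step 3 (conclusion).} Given $x\neq y$, take $x_0=\frac{x+y}{2}$ and $R=|x-y|$, so that both points lie in $B$. Then
\[ |\varphi(x)-\varphi(y)|\leq|\varphi(x)-\varphi_B|+|\varphi_B-\varphi(y)|\leq CM|x-y|^{1-\frac{3}{p}} . \]
The only real obstacle is making the potential estimate rigorous when $\vec{\nabla}\varphi$ is merely a measure, and Step 1 handles this. The key quantitative point is that the Morrey exponent $3-\frac{3}{p}$ exceeds $2$, which is precisely what makes the dyadic sum in Step 2 converge.
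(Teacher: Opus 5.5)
Your proof is correct, and there is nothing in the paper to compare it against step by step: the paper gives no argument for this lemma and simply refers to \cite[Proposition 3.4]{GigaMiyakawa}. What you wrote is the classical Morrey-lemma argument. You use the ball-average potential estimate $|\varphi(x)-\varphi_B|\le C\int_B |x-z|^{-2}|\vec{\nabla}\varphi(z)|\,dz$, with $C$ independent of the radius. You then decompose dyadically, and the Morrey growth $\rho^{3-3/p}$ of $|\vec{\nabla}\varphi|(B(x,\rho))$ beats the weight $\rho^{-2}$ precisely because $p>3$.

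The citation buys brevity. Your route is self-contained and makes explicit two things the statement leaves implicit. First, it shows why a merely measure-valued gradient is admissible: mollification does not increase the $\dot{M}^{1,p}$-norm, since $|\mu*\eta_\varepsilon|(B(x_0,R))\le\sup_{|w|\le\varepsilon}|\mu|(B(x_0-w,R))$. Second, it explains in what sense $\varphi\in\mathcal{S}'$ has pointwise values, because the Arzel\`a--Ascoli step produces the continuous representative. Your argument is also purely local, using only balls of radius comparable to $|x-y|$. A global alternative through the kernel $(x-z)/|x-z|^3$ of $\vec{\nabla}(-\Delta)^{-1}$ would only work in difference form. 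The Morrey bound alone does not make $\int_{|z|>1}|z|^{-2}\,d|\vec{\nabla}\varphi|(z)$ finite; take $\varphi=|x|^{1-3/p}$.

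Two small points should be tightened. In Step 1, the convergence of the constants $\varphi_\varepsilon(0)$ deserves one line. The uniform bound $|\varphi_\varepsilon(x)-\varphi_\varepsilon(0)|\le CM|x|^{1-3/p}$, together with dominated convergence, upgrades the locally uniform convergence of $\varphi_\varepsilon-\varphi_\varepsilon(0)$ to convergence in $\mathcal{S}'$. Testing against some $\chi\in\mathcal{S}$ with $\int\chi=1$ then gives $\varphi_\varepsilon(0)\to c$ and $\varphi=\psi+c$. In Step 2, the integral should be taken against $|\vec{\nabla}\varphi_\varepsilon|\,dz$ rather than $d|\mu|$, since the potential estimate is only justified for the smooth approximants. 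By Step 1 this does not change the resulting bound.
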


\section{Existence of finite-energy weak solutions: proof of Proposition \ref{Prop:Existence-Solutions}}\label{Sec:Existece}  We consider the following approximate system. Let $\phi \in \mathcal{C}^{\infty}_{0}(\Rt)$ be a cut-off function satisfying $\phi(x)=1$ when $|x|\leq 1$, $\phi(x)=0$ when $|x|\geq 2$, and $0\leq\phi(x)\leq 1$ for all $x \in \Rt$. For $R>1$, we define $\phi_R(x)= \phi \left( \frac{x}{R}\right)$. 

Then, for $R>1$ consider the system:

\begin{equation}\label{BLLocalized}
\begin{cases}\vspace{2mm}
-\Delta \vu + \mathbb P\big((\phi_R\vu) \cdot\vec{\nabla} (\phi_R\vu)\big) = \mathbb P(\phi_R\theta \, \vec{{\bf g}})+ \vf,  \qquad \text{div}(\vu)=0, \\
-\Delta \theta + \phi_R\vu\cdot \vec{\nabla} (\phi_R\theta)  =  g.
\end{cases}  
\end{equation}

Observe that we have inserted \(\phi_R\) as a multiplicative factor for both \(\vu\) and \(\theta\). In addition, for the sake of simplicity, we have substituted the expressions \(\diver(\vu\otimes\vu)\) and \(\diver(\theta\vu)\) with \((\vu\cdot\vec{\nabla})\vu,\) and \(\vu\cdot \vec{\nabla}\theta\).

\medskip

Note that, as $R \to +\infty$, solutions of this system formally converge to a solution of the original Boussinesq system (\ref{Boussinesq}). In this framework, with $R>1$ fixed, we begin by constructing solutions to the system (\ref{BLLocalized}).

\medskip

We construct these solutions by means of Schaefer's fixed-point theorem, stated below. For a proof, see \cite[Theorem 16.1]{PGLR1}.

\begin{Theoreme}[Schaefer's fixed point]\label{Schaefer}
	Let $E$ be a Banach space and let $T: E \to E$ be an operator satisfying:
	\begin{enumerate}
		\item $T$ is continuous and compact.
		\item There exists a constant $M>0$ such that, for any parameter $0\leq \lambda \leq 1$, if $e=\lambda T(e)$, then $\| e \|_{E}\leq M$. 
	\end{enumerate}	
	Then, the fixed-point problem $e=T(e)$ admits at least one solution $e \in E$. 
\end{Theoreme}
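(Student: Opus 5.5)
The plan is to reduce the statement to Schauder's fixed-point theorem, in the version for a continuous map $S:K\to K$ on a nonempty closed convex subset $K$ of a Banach space with $S(K)$ relatively compact. The tool is a radial truncation of $T$. Fix $R:=M+1$, where $M$ is the a priori bound in hypothesis (2), and let $K:=\{e\in E:\ \|e\|_E\leq R\}$, which is closed, convex and bounded. Let $\pi_R:E\to K$ be the radial retraction: $\pi_R(x)=x$ if $\|x\|_E\leq R$, and $\pi_R(x)=R\,x/\|x\|_E$ otherwise. Define $S:=\pi_R\circ T$ restricted to $K$.

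\textbf{Step 1: $S$ satisfies Schauder's hypotheses.} Clearly $S(K)\subset K$. The retraction $\pi_R$ is continuous on $E$; it is in fact $2$-Lipschitz. Hence $S$ is continuous, because $T$ is. Since $T$ is compact and $K$ is bounded, $\overline{T(K)}$ is compact. Then $\pi_R(\overline{T(K)})$ is compact as a continuous image of a compact set, and it contains $S(K)$, so $S(K)$ is relatively compact. Schauder's theorem therefore yields $e\in K$ with $e=S(e)$.

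\textbf{Step 2: remove the truncation.} There are two cases.
\begin{itemize}
\item If $\|T(e)\|_E\leq R$, then $S(e)=T(e)$, so $e=T(e)$ and we are done.
\item Otherwise $\|T(e)\|_E>R$. Then $e=\lambda T(e)$ with $\lambda:=R/\|T(e)\|_E\in\,]0,1[$, and $\|e\|_E=R=M+1$. Hypothesis (2) forces $\|e\|_E\leq M$, a contradiction.
\end{itemize}
Hence only the first case occurs, and $e$ is a fixed point of $T$.

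The main obstacle is really just whether Schauder's theorem may be quoted. If it must be proved as well, I would do it by finite-dimensional approximation. For $\varepsilon>0$, cover the compact set $\overline{S(K)}$ by finitely many balls $B(y_i,\varepsilon)$. Build the Schauder projection
\[
P_\varepsilon(x)=\frac{\sum_i \max(\varepsilon-\|x-y_i\|_E,0)\,y_i}{\sum_i \max(\varepsilon-\|x-y_i\|_E,0)},
\]
which is continuous and satisfies $\|P_\varepsilon x-x\|_E<\varepsilon$ on $\overline{S(K)}$. Apply Brouwer's theorem to $P_\varepsilon\circ S$ on the convex hull of the $y_i$, which is a compact convex subset of a finite-dimensional space contained in $K$. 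This gives points $e_\varepsilon$ with $\|S(e_\varepsilon)-e_\varepsilon\|_E<\varepsilon$. Relative compactness of $S(K)$ then lets one extract a subsequence with $S(e_{\varepsilon})\to e$. It follows that $e_\varepsilon\to e$, and continuity gives $S(e)=e$.
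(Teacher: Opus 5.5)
Your argument is correct. Truncating $T$ by the radial retraction onto the closed ball of radius $M+1$, applying Schauder's theorem to $\pi_R\circ T$ on that ball, and then using the a priori bound to exclude the case $\|T(e)\|_E>M+1$ is the classical proof of this result. Your Brouwer-based sketch of Schauder's theorem is also sound, provided the centers $y_i$ are taken in $\overline{S(K)}\subset K$ so that their convex hull lies in $K$. The paper gives no proof of this statement and only cites \cite[Theorem 16.1]{PGLR1}, so there is no argument in the paper to compare yours against.
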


In the setting of this theorem, we define the Banach space
\[
E= \left\{ (\vu,\theta) \in \dot{H}^1(\Rt) : \,  \text{div}(\vu)=0  \right\},
\]
equipped with its usual norm. 

\medskip

On the other hand, we rewrite the system (\ref{BLLocalized}) as the following fixed-point problem:
\begin{equation*}
\begin{cases}\vspace{2mm}
\vu= - (-\Delta)^{-1} \mathbb P\Big( ((\phi_R\vu) \cdot\vec{\nabla}) \phi_R\vu\Big)  + (-\Delta)^{-1} \mathbb P\big(\phi_R\theta \, \vec{{\bf g}}\big)+(-\Delta)^{-1}(\vf),  \\
\theta= -(-\Delta)^{-1}\big( \phi_R\vu\cdot \vec{\nabla} (\phi_R\theta)\big)+ (-\Delta)^{-1}(g),
\end{cases} 
\end{equation*}
where, from the right-hand side, we define the operator 
\begin{equation*}
 T_{R, \vvg} \left( \begin{array}{c}
\vu \\ 
 \theta
 \end{array}  \right):= \left(  \begin{array}{c}\vspace{2mm}
- (-\Delta)^{-1} \mathbb P \Big( ((\phi_R\vu) \cdot\vec{\nabla}) \phi_R\vu\Big)   + (-\Delta)^{-1} \mathbb P\big(\phi_R\theta  \,\vec{{\bf g}}\big)+(-\Delta)^{-1}(\vf)\\ 
-(-\Delta)^{-1}\big( \phi_R\vu\cdot \vec{\nabla} (\phi_R\theta)\big)+ (-\Delta)^{-1}(g)
\end{array} \right).
\end{equation*}

In the next technical lemmas, we verify that the operator $T_{R,\vvg}(\cdot)$ fulfills all the hypotheses stated in Theorem \ref{Schaefer}.

\begin{Lemme}\label{Lem-Tech-Continuity-Compactness} Let $R>1$ and $\vvg \in L^{\frac{3}{2}}(\Rt)\cap \dot{H}^{\frac{1}{2}}(\Rt)$. Then the operator $T_{R,\vvg}(\cdot):E \to E$ is continuous and compact. 
\end{Lemme}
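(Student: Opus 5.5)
We split $T_{R,\vvg}$ into its three pieces and treat each separately. The source terms $(-\Delta)^{-1}\vf$ and $(-\Delta)^{-1}g$ are constant in $E$ and belong to $\dot H^1$, since $\vf,g\in\dot H^{-1}$. The two nonlinear terms are
\[
B_R(\vu,\vv):=-(-\Delta)^{-1}\P\big((\phi_R\vu\cdot\vec\nabla)(\phi_R\vv)\big),\qquad
B_R(\vu,\theta):=-(-\Delta)^{-1}\big(\phi_R\vu\cdot\vec\nabla(\phi_R\theta)\big),
\]
and the linear coupling term is
\[
L_R(\theta):=(-\Delta)^{-1}\P(\phi_R\theta\,\vvg).
\]
Writing $(\phi_R\vu\cdot\vec\nabla)(\phi_R\vv)=\diver(\phi_R\vv\otimes\phi_R\vu)-(\phi_R\vv)(\vu\cdot\vec\nabla\phi_R)$, and using $\diver(\vu)=0$ together with $\|\phi_R\vu\|_{L^6}\le C\|\vu\|_{\dot H^1}$ (Hardy's inequality handles the gradient of $\phi_R$), we obtain the bilinear bound
\[
\|B_R(\vu,\vv)\|_{\dot H^1}\le C\|\phi_R\vu\otimes\phi_R\vv\|_{L^2}+C_R\|\cdot\|_{\dot H^{-1}}\le C_R\|\vu\|_{\dot H^1}\|\vv\|_{\dot H^1}.
\]
Here $\phi_R\vu\,\phi_R\vv\in L^3(B_{2R})\subset L^2(B_{2R})$, which is also the bound used for the temperature term. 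For $L_R$ we use $\phi_R\theta\,\vvg\in L^{6/5}$ by Hölder, because $\theta\in L^6$ and $\vvg\in L^{3/2}$; hence $L_R$ is bounded $\dot H^1\to\dot H^1$ via $L^{6/5}\subset\dot H^{-1}$. Continuity of $T_{R,\vvg}$ then follows immediately from these bilinear and linear bounds.

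The main work is compactness. Let $(\vu_n,\theta_n)$ be bounded in $E$. Up to a subsequence we have $\vu_n\rightharpoonup\vu$ and $\theta_n\rightharpoonup\theta$ weakly in $\dot H^1$. By Rellich's theorem on the fixed ball $B_{2R}$, we also get $\phi_R\vu_n\to\phi_R\vu$ and $\phi_R\theta_n\to\phi_R\theta$ strongly in $L^p(B_{2R})$ for every $p<6$, and in particular in $L^3$ and in $L^4$. The localization by $\phi_R$ is exactly what makes this available.

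For the quadratic terms we write $\diver(\phi_R\vv_n\otimes\phi_R\vu_n)$ plus lower-order terms. Strong $L^4(B_{2R})$ convergence gives $\phi_R\vu_n\otimes\phi_R\vv_n\to\phi_R\vu\otimes\phi_R\vv$ in $L^2$. Applying the operator $(-\Delta)^{-1}\P\,\diver$, which is bounded $L^2\to\dot H^1$, yields strong convergence in $\dot H^1$. The lower-order piece $(\phi_R\vv_n)(\vu_n\cdot\vec\nabla\phi_R)$ is compactly supported and converges in $L^2\cap L^1$, hence in $L^{6/5}\subset\dot H^{-1}$, and after $(-\Delta)^{-1}$ this is enough. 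The temperature nonlinearity is handled in the same way.

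The linear term $L_R$ is the main obstacle, since strong convergence of $\phi_R\theta_n$ must be paired with $\vvg$, which is only in $L^{3/2}\cap\dot H^{1/2}$. We therefore need $\phi_R\theta_n\vvg\to\phi_R\theta\vvg$ in $\dot H^{-1}$, or in $L^{6/5}$. Since $\vvg\in\dot H^{1/2}\subset L^3$, Hölder gives
\[
\|\phi_R(\theta_n-\theta)\vvg\|_{L^{6/5}}\le\|\phi_R(\theta_n-\theta)\|_{L^2(B_{2R})}\|\vvg\|_{L^3},
\]
and the right-hand side tends to $0$ by Rellich. This is precisely where the assumption $\vvg\in\dot H^{1/2}$ enters, in line with the paper's remark that $L^3$ suffices. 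Hence $L_R(\theta_n)\to L_R(\theta)$ in $\dot H^1$. Combining the three pieces shows that $T_{R,\vvg}$ maps bounded sets to relatively compact sets, so it is compact. Finally, $T_{R,\vvg}$ maps into $E$ because of the Leray projector $\P$ and the assumption $\diver(\vf)=0$.
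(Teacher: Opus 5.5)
Your proof is correct. For the genuinely new coupling term $(-\Delta)^{-1}\P(\phi_R\theta\,\vvg)$ it is exactly the paper's argument: H\"older with $\vvg\in L^{\frac{3}{2}}$ and $L^{\frac{6}{5}}\subset\dot H^{-1}$ give continuity, and Rellich on a ball together with $\vvg\in\dot H^{\frac{1}{2}}\subset L^3$ gives compactness.

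The only difference is that you prove continuity and compactness of the localized quadratic terms directly, by writing them as $\diver(\phi_R\vv\otimes\phi_R\vu)$ plus a compactly supported remainder, whereas the paper simply invokes \cite[Theorem $16.1$]{PGLR1}. For the remainder $(\phi_R\vv_n)(\vu_n\cdot\vec\nabla\phi_R)$, justify its strong convergence by applying Rellich to $\vu_n$ itself on $B_{2R}$, not only to $\phi_R\vu_n$.
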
	
\begin{proof} We split the operator $T_{R,\vvg}(\cdot)$ as 
	\begin{equation*}
	T_{R,\vvg}\left( \begin{array}{c}
	\vu \\ 
	\theta
	\end{array}  \right)= \left(  \begin{array}{c}\vspace{2mm}
	- (-\Delta)^{-1} \mathbb P \Big( ((\phi_R\vu) \cdot\vec{\nabla}) \phi_R\vu\Big)   +(-\Delta)^{-1}(\vf)\\ 
	-(-\Delta)^{-1}\big( \phi_R\vu\cdot \vec{\nabla} (\phi_R\theta)\big)+ (-\Delta)^{-1}(g)
	\end{array} \right)+\left(  \begin{array}{c}\vspace{2mm}
	(-\Delta)^{-1} \mathbb P\big(\phi_R\theta  \,\vec{{\bf g}}\big)\\ 
	0
	\end{array} \right).
	\end{equation*}
	
	By \cite[Theorem $16.1$]{PGLR1} we know that the operator $- (-\Delta)^{-1} \mathbb P \Big( ((\phi_R\vu) \cdot\vec{\nabla}) \phi_R\vu\Big) +(-\Delta)^{-1}(\vf)$ is continuous and compact in the Banach space $\{ \vu \in \dot{H}^1(\Rt): \, \text{div}(\vu)=0 \}$. 
	
	\medskip
	
	Since the operator $-(-\Delta)^{-1}\big( \phi_R\vu\cdot \vec{\nabla} (\phi_R\theta)\big)+ (-\Delta)^{-1}(g)$ is structurally equal to the previous one ($\theta$ and $g$ have the same hypotheses as $\vu$ and $\vf$), we also have that this operator is continuous and compact in the space $E$ defined above. 
	
	\medskip
	
	Consequently, it is sufficient to prove that the operator $ (-\Delta)^{-1} \mathbb P\big(\phi_R\theta  \,\vec{{\bf g}}\big)$ is continuous and compact in $\dot{H}^1(\Rt)$. In the sequel, we will use a generic constant $C>0$, which depends on $R$ and may change from one line to the next. 
	
	\medskip 
	
	\emph{Continuity}. For any $\theta \in \dot{H}^1(\Rt)$, using the continuous embeddings $L^{\frac{6}{5}}(\Rt)\subset \dot{H}^{-1}(\Rt)$, $\dot{H}^1(\Rt)\subset L^6(\Rt)$ and H\"older inequalities (with $\frac{5}{6}=\frac{1}{6}+\frac{2}{3}$), we directly obtain
	\begin{equation*}
	\begin{split}
	\left\| (-\Delta)^{-1} \mathbb P\big(\phi_R\theta  \,\vec{{\bf g}}\big) \right\|_{\dot{H}^{1}} \leq &\, C \| \phi_R\theta  \,\vec{{\bf g}} \|_{\dot{H}^{-1}} \leq C\, \| \phi_R\theta  \,\vec{{\bf g}} \|_{L^{\frac{6}{5}}} \leq C\| \phi_R \|_{L^\infty}\, \| \theta  \,\vec{{\bf g}} \|_{L^{\frac{6}{5}}} \\
	\leq &\, C \| \theta \|_{L^6}\, \| \vvg \|_{L^{\frac{3}{2}}} \leq  C \| \theta \|_{\dot{H}^1}\, \| \vvg \|_{L^{\frac{3}{2}}}. 
	\end{split}
	\end{equation*}
	
	\emph{Compactness}. Let $(\theta_n)_{n\in \mathbb{N}}$ be a sequence in $\dot{H}^1(\Rt)$ which, for any $n \in \mathbb{N}$, satisfies
	\begin{equation}\label{Sequence-Bound}
	\| \theta_n \|_{\dot{H}^1}\leq K,
	\end{equation} 
	for a constant $K>0$. We will prove that there exists a subsequence $(\theta_{n_k})_{k \in \mathbb{N}}$ such that $ (-\Delta)^{-1} \mathbb P\big(\phi_R\theta_{n_k}  \,\vec{{\bf g}}\big)$ converges in the strong topology of $\dot{H}^{1}(\Rt)$. 
	
	\medskip
	
	First, we prove that the sequence $(\phi_R \theta_n)_{n\in \mathbb{N}}$ is also uniformly bounded in $\dot{H}^{1}(\Rt)$. Using H\"older inequalities (with $\frac{1}{2}=\frac{1}{3}+\frac{1}{6}$) and (\ref{Sequence-Bound}), for any $n \in \mathbb{N}$ we write
	\begin{equation}\label{Sequence-Bound-R}
	\begin{split}
	\| \phi_R \theta_n \|_{\dot{H}^1}\leq  &\, C \| \vec{\nabla}(\phi_R \theta_n)\|_{L^2}  \leq C \| (\vec{\nabla} \phi_R) \theta_n \|_{L^2}+ \| \phi_R \vec{\nabla} \theta_n\|_{L^2}\\
	\leq &\, C\, \| \vec{\nabla} \phi_R \|_{L^3}\|\theta_n\|_{L^6}+ C\, \|\phi_R \|_{L^\infty}\| \vec{\nabla} \theta_n \|_{L^2} \leq C\, \| \theta_n \|_{\dot{H}^1}\leq CK. 
	\end{split}
	\end{equation} 
	
	On the other hand, let $B_{8R}:= \{ x \in \Rt : \, |x|<8R \}$. We prove that there exists a subsequence such that $(\phi_R \theta_{n_k})_{k\in \mathbb{N}}$ converges in the strong topology of $L^p(B_{8R})$, for any $1\leq p < 6$.
	
	\medskip
	
	Indeed, by definition of the cut-off function $\phi_R$, for any $n \in \mathbb{N}$ we have $\text{supp}(\phi_R \theta_n)\subset B_{8R}$. Then, the desired convergence follows from the uniform bound proved in (\ref{Sequence-Bound-R}) and the Rellich–Kondrashov theorem (see \cite[Theorem $IX.16$]{Brezis}) in the Sobolev space $\dot{H}^{1}(B_{8R})$.  
	
	\medskip
	
	With this convergence property at hand, the convergence of $ (-\Delta)^{-1} \mathbb P\big(\phi_R\theta_{n_k}  \,\vec{{\bf g}}\big)$ follows from the following inequality, where we also use H\"older inequalities (with $\frac{5}{6}=\frac{1}{2}+\frac{1}{3}$) and the continuous embedding $\dot{H}^{\frac{1}{2}}(\Rt)\subset L^3(\Rt)$ :
	\begin{equation*}
	\left\| (-\Delta)^{-1} \mathbb P\big(\phi_R\theta_{n_k}  \,\vec{{\bf g}}\big) \right\|_{\dot{H}^1} \leq C\, \| \phi_R\theta_{n_k}\, \vvg \|_{\dot{H}^{-1}}\leq C\, \| \phi_R\theta_{n_k}\, \vvg\|_{L^{\frac{6}{5}}} \leq C\, \| \phi_R \theta_{n_k} \|_{L^2}\, \| \vvg \|_{L^3} \leq C\, \| \phi_R \theta_{n_k} \|_{L^2}\, \| \vvg \|_{\dot{H}^{\frac{1}{2}}}. 
	\end{equation*}
	
	Thus, we conclude the compactness of the operator $ (-\Delta)^{-1} \mathbb P\big(\phi_R\theta  \,\vec{{\bf g}}\big)$, and Lemma \ref{Lem-Tech-Continuity-Compactness} is now proven. 
\end{proof}

\begin{Lemme}\label{Lem-Tech-A-Priori} For any parameter $0<\lambda\leq 1$, let $(\vu, \theta) \in E$ be such that 
	\begin{equation*}
	\left( \begin{array}{c}
	\vu \\ 
	\theta
	\end{array}  \right) = 	\lambda T_{R, \vvg} \left( \begin{array}{c}
	\vu \\ 
	\theta
	\end{array}  \right).
	\end{equation*}
	Then, for a numerical constant $C>0$, it holds that
	\begin{equation}\label{Estim-Energy-A-Priori}
	\| \vu \|_{\dot{H}^1} \leq C\left( \| g \|_{\dot{H}^{-1}}\|\vvg \|_{L^{\frac{3}{2}}} + \| \vf \|_{\dot{H}^{-1}}  \right) \quad \text{and} \quad   \| \theta \|_{\dot{H}^1}\leq C \| g \|_{\dot{H}^1}.
	\end{equation}
\end{Lemme}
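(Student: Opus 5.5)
The plan is a standard energy estimate on the fixed-point equation, treating the temperature equation first and then feeding the resulting bound into the velocity equation. Note that the bound on $\theta$ in (\ref{Estim-Energy-A-Priori}) should read $\|g\|_{\dot{H}^{-1}}$, consistently with (\ref{Energy-Estimate}); this is what we prove. Writing $(\vu,\theta)=\lambda T_{R,\vvg}(\vu,\theta)$ and applying $-\Delta$, the pair solves
\[
-\Delta \theta + \lambda\, \phi_R\vu\cdot\vec{\nabla}(\phi_R\theta) = \lambda g, \qquad -\Delta \vu + \lambda\,\P\big((\phi_R\vu\cdot\vec{\nabla})(\phi_R\vu)\big) = \lambda\,\P(\phi_R\theta\,\vvg)+\lambda \vf
\]
in the sense of distributions, and in the dual pairing $\dot{H}^{-1}\times\dot{H}^1$.

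\emph{Step 1: the nonlinear terms vanish.} Since $\vu,\theta\in \dot{H}^1\subset L^6$ and $\phi_R$ has compact support, the products $\phi_R\vu\,\phi_R\theta$ and $\phi_R\vu\otimes\phi_R\vu$ are well behaved. Indeed, $(\phi_R\vu)\cdot\vec{\nabla}(\phi_R\theta)\in L^{3/2}$ with compact support, so it lies in $L^{6/5}\subset \dot{H}^{-1}$. Hence testing against $\theta$, respectively $\vu$, is legitimate.

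The key observation is the following. Since $\diver(\vu)=0$, we have $\diver(\phi_R\vu)=\vec{\nabla}\phi_R\cdot\vu$, which is not zero. So I would test the $\theta$-equation against $\phi_R^2\theta$ rather than $\theta$? That does not give the clean energy identity either.

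The simpler route is to note that
\[
\int \phi_R\vu\cdot\vec{\nabla}(\phi_R\theta)\,\theta\,dx=\int \vu\cdot\vec{\nabla}(\phi_R\theta)\,(\phi_R\theta)\,dx=\tfrac12\int \vu\cdot\vec{\nabla}(\phi_R\theta)^2\,dx=0,
\]
using $\diver(\vu)=0$ and the integrability $\vu\in L^6$, $(\phi_R\theta)^2\in W^{1,1}$ with compact support. The same trick kills the convective term in the velocity equation: after removing $\P$ (because $\diver(\vu)=0$), we get
\[
\int (\phi_R\vu\cdot\vec{\nabla})(\phi_R\vu)\cdot\vu\,dx=\tfrac12\int \vu\cdot\vec{\nabla}|\phi_R\vu|^2\,dx=0.
\]
Justifying these integrations by parts, by density of smooth compactly supported functions in $\dot{H}^1$ and using the localization by $\phi_R$, is the main technical point. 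I expect it to be routine precisely because the cut-off makes every product compactly supported and locally integrable with room to spare.

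\emph{Step 2: the estimates.} For the temperature, Step 1 yields $\|\theta\|_{\dot{H}^1}^2=\lambda\langle g,\theta\rangle\le \|g\|_{\dot{H}^{-1}}\|\theta\|_{\dot{H}^1}$, hence $\|\theta\|_{\dot{H}^1}\le \|g\|_{\dot{H}^{-1}}$ since $\lambda\le 1$.

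For the velocity, Step 1 yields
\[
\|\vu\|_{\dot{H}^1}^2=\lambda\int \phi_R\theta\,\vvg\cdot\vu\,dx+\lambda\langle \vf,\vu\rangle .
\]
The first term is bounded by Hölder with $1=\frac16+\frac23+\frac16$ and Sobolev embedding:
\[
\|\theta\|_{L^6}\|\vvg\|_{L^{3/2}}\|\vu\|_{L^6}\le C\|\theta\|_{\dot{H}^1}\|\vvg\|_{L^{3/2}}\|\vu\|_{\dot{H}^1}.
\]
The forcing term is bounded by $\|\vf\|_{\dot{H}^{-1}}\|\vu\|_{\dot{H}^1}$. Dividing by $\|\vu\|_{\dot{H}^1}$ and inserting the bound on $\theta$ gives the first inequality in (\ref{Estim-Energy-A-Priori}), with a numerical constant independent of $R$ and $\lambda$.
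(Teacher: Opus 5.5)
Your proposal is correct and follows the paper's proof. It uses the same energy identities for $\theta$ and $\vu$, in which the localized convective terms cancel. You justify that cancellation more explicitly than the paper does, by regrouping $\phi_R$ onto $\theta$ (resp.\ $\vu$) and using $\diver(\vu)=0$. One small correction: pairing with $\vu\in L^6$ needs $\vec{\nabla}(\phi_R\theta)^2\in L^{6/5}$, which holds because it lies in $L^{3/2}$ with compact support; being in $W^{1,1}$ alone would not suffice. The rest matches the paper: duality for $\theta$, and H\"older with $1=\frac16+\frac23+\frac16$ for $\vu$.

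You are also right that the bound on $\theta$ should read $\|g\|_{\dot{H}^{-1}}$. That is what the paper's $\dot{H}^{-1}$--$\dot{H}^1$ duality argument actually yields, so the $\dot{H}^1$ in the statement is a typo.
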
	
\begin{proof} From the identity above, it follows that $(\vu, \theta)$ satisfies the system 
	\begin{equation*}
	\begin{cases}\vspace{2mm}
	-\Delta \vu = -\lambda \mathbb P\big((\phi_R\vu) \cdot\vec{\nabla} (\phi_R\vu)\big) + \lambda \mathbb P(\phi_R\theta \, \vec{{\bf g}})+ \lambda \vf,  \qquad \text{div}(\vu)=0, \\
	-\Delta \theta =-\lambda \phi_R\vu\cdot \vec{\nabla} (\phi_R\theta)  + \lambda  g.
	\end{cases}  
	\end{equation*}	
	As $(\vu, \theta)\in \dot{H}^{1}(\Rt)$, a standard energy estimate yields
	\begin{equation}\label{Energy1}
	\| \vu \|^2_{\dot{H}^2} = \lambda \int_{\Rt} \phi_R \theta \vvg \cdot \vu \, dx + \lambda \langle \vf, \vu \rangle_{\dot{H}^{-1}\times \dot{H}^1},
	\end{equation}
	and 
	\begin{equation}\label{Energy2}
	\| \theta \|^2_{\dot{H}^2}= \lambda \langle  g , \theta \rangle_{\dot{H}^{-1}\times \dot{H}^1},
	\end{equation}
	where, from the divergence-free property of $\vu$, we have used the identities 
	\[ \int_{\Rt} \mathbb P\big((\phi_R\vu) \cdot\vec{\nabla} (\phi_R\vu)\big)\cdot \vu \, dx =0 \quad \text{and}\quad \int_{\Rt}\phi_R\vu\cdot \vec{\nabla} (\phi_R\theta)  \theta \, dx =0.  \]
	
	From identity (\ref{Energy2}), as $0<\lambda\leq 1$ and by the $\dot{H}^{-1}-\dot{H}^1$ duality, we directly obtain 
	\[ \| \theta \|_{\dot{H}^1}\leq C\| g \|_{\dot{H}^1}. \]
	Thereafter, from identity (\ref{Energy1}), using the $\dot{H}^{-1}-\dot{H}^1$ duality together with H\"older inequalities (with $1=\frac{1}{6}+\frac{5}{6}$ and $\frac{5}{6}=\frac{1}{6}+\frac{2}{3}$), we write
	\begin{equation*}
	\begin{split}
	\| \vu \|^2_{\dot{H}^1} \leq &\,  \lambda \| \phi_R \theta \vvg \|_{L^{\frac{6}{5}}}\, \| \vu \|_{L^6}+C \| \vf \|_{\dot{H}^{-1}}\, \| \vu \|_{\dot{H}^1}   \leq  \lambda \| \phi_R \|_{L^\infty} \| \theta\|_{L^6}\| \vvg \|_{L^{\frac{3}{2}}}\, \| \vu \|_{L^6}+ \| \vf \|_{\dot{H}^{-1}}\, \| \vu \|_{\dot{H}^1} \\
	\leq &\, C\, \|\theta \|_{\dot{H}^1}\| \vvg \|_{L^{\frac{3}{2}}}\, \| \vu \|_{\dot{H}^1}+  C \| \vf \|_{\dot{H}^{-1}}\, \| \vu \|_{\dot{H}^1} \leq C \| g \|_{\dot{H}^{-1}}\|\vvg \|_{L^{\frac{3}{2}}}\| \vu \|_{\dot{H}^1}+ C \| \vf \|_{\dot{H}^{-1}}\, \| \vu \|_{\dot{H}^1},
	\end{split}
	\end{equation*}
	hence, we obtain
	\[ \| \vu \|_{\dot{H}^1} \leq C\left( \| g \|_{\dot{H}^{-1}}\|\vvg \|_{L^{\frac{3}{2}}} + \| \vf \|_{\dot{H}^{-1}}  \right). \]
\end{proof}

 End of the proof of Proposition \ref{Prop:Existence-Solutions}.  With Lemmas \ref{Lem-Tech-Continuity-Compactness} and \ref{Lem-Tech-A-Priori} at hand, a direct application of Theorem \ref{Schaefer} yields the existence of a solution $(\vu,\theta) \in E$ to the equation $	\left( \begin{array}{c}
\vu \\ 
\theta
\end{array}  \right) = 	T_{R, \vvg} \left( \begin{array}{c}
\vu \\ 
\theta
\end{array}  \right)$. 

\medskip

Therefore, for fixed $R>1$, it follows that the system (\ref{BLLocalized}) admits a solution $(\vu_{R}, \theta_R) \in \dot{H}^1(\Rt)$. Moreover, from (\ref{Estim-Energy-A-Priori}), the family of solutions $(\vu_R, \theta_R)_{R>1}$ is uniformly bounded in $\dot{H}^1(\Rt)$.

\medskip

Using this uniform bound and applying a standard passing-to-the-limit argument as $R\to +\infty$ (see, for instance, \cite[Theorem $4$]{Cortez-Jarrin} and \cite[Theorem $16.2$]{PGLR1}), we obtain a limit $(\vu,\theta)\in \dot{H}^1(\Rt)$. Moreover, using well-known properties of the Leray projector (see \cite[Lemma $6.3$]{PGLR1}), there exists $P \in \mathcal{D}'(\Rt)$ such that $(\vu,\theta,P)$ verifies the Boussinesq system (\ref{Boussinesq}) in the sense of distributions.   

\medskip

Finally,  we study the pressure $P$ given in (\ref{Pressure}).  As $\vu \in \dot{H}^1(\Rt)$, by product laws in homogeneous Sobolev spaces we have that $\vu \otimes \vu \in \dot{H}^{\frac{1}{2}}$; therefore, we obtain $ \| P_{\vu} \|_{\dot{H}^{\frac{1}{2}}} \leq C \| \vu \|^{2}_{\dot{H}^1}$. On the other hand, as $\theta \in \dot{H}^{1}(\Rt)\subset L^6(\Rt)$ and $\vvg \in L^3(\Rt)$, from H\"older inequalities we have $\theta \vvg \in L^{2}(\Rt)$; hence, $ \|P_{\theta}  \|_{\dot{H}^1} \leq C \| \theta\|_{\dot{H}^1}\| \vvg \|_{L^3} \leq C \, \| \theta\|_{\dot{H}^1}\| \vvg \|_{\dot{H}^{\frac{1}{2}}}$. 

\medskip 

Proposition \ref{Prop:Existence-Solutions} is now proven.

\section{Analyticity of weak $\dot{H}^1$-solutions in the non-homogeneous case}\label{Sec:Grevrey-non-homogenous}
\subsection{Proof of Theorem \ref{Th:Gevrey}}
The proof is divided into the following steps. 

\medskip

{\bf Step 1}. \emph{Fujita-Kato mild solutions for the evolution problem}.  We consider the evolution problem for the Boussinesq system

\medskip

Within this framework, we begin by constructing local-in-time solutions in the space $\mathcal{C}_{t}\dot{H}^{1}_x$. The proof of this result is standard and essentially follows the same arguments as in the case of the classical Navier–Stokes equation (when $\vartheta\equiv 0$). See \cite[Theorem 7.1]{PGLR1}.  Nevertheless, for the reader's convenience, we provide a sketch of the proof, including the estimates on $\P (\vartheta\, \veg)$.

\begin{Proposition}\label{Prop-Tech:Fujita-Kato} Let $\vv_0, \vartheta_0 \in \dot{H}^1(\Rt)$, where $\text{div}(\vv)=0$, $  \vef, \eg \in \mathcal{C}\big( [0,1] \dot{H}^{1}(\Rt)$ and $\veg \in \mathcal{C}\big([0,1], \dot{H}^{\frac{1}{2}}(\Rt)\big)$. Define the quantities
	\begin{equation}\label{delta}
	\delta_0:= C \left( \| \vv_0\|_{\dot{H}^1}+\| \vartheta_0\|_{\dot{H}^1_x} + \| \vef \|_{L^\infty_t\dot{H}^1_x}+ \| \eg \|_{L^{\infty}_t\dot{H}^1_x}\right) \quad \text{and}\quad \eta_0:= C \| \veg \|_{L^\infty_t \dot{H}^{\frac{1}{2}}_{x}},
	\end{equation}
	where $C>0$ is a numerical constant. Then, there exists a time 
	\begin{equation}\label{T0}
	T_0=\frac{1}{2}\min\left(1, \frac{1}{ ( 9\delta_0)^4}, \frac{1}{(3\eta_0)^2}\right)>0,  
	\end{equation}
	and a pair $(\vv, \vartheta)\in \mathcal{C}([0,T_0], \dot{H}^1(\Rt))$ which is the unique solution of the system (\ref{Boussinesq-Evolution}).	
\end{Proposition}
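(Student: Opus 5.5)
We will prove Proposition \ref{Prop-Tech:Fujita-Kato} by a Banach fixed-point argument in $\mathcal{C}([0,T_0],\dot{H}^1(\Rt))$, following \cite[Theorem 7.1]{PGLR1}. Via Duhamel's formula, we write (\ref{Boussinesq-Evolution}) as the mild system
\begin{equation*}
\begin{split}
\vv(t)&= e^{t\Delta}\vv_0 - \int_0^t e^{(t-\tau)\Delta}\P\,\text{div}(\vv\otimes\vv)\,d\tau + \int_0^t e^{(t-\tau)\Delta}\P(\vartheta\,\veg)\,d\tau + \int_0^t e^{(t-\tau)\Delta}\P\vef\,d\tau,\\
\vartheta(t)&= e^{t\Delta}\vartheta_0 - \int_0^t e^{(t-\tau)\Delta}\text{div}(\vartheta\,\vv)\,d\tau + \int_0^t e^{(t-\tau)\Delta}\eg\,d\tau .
\end{split}
\end{equation*}
We denote the right-hand side by $\Phi(\vv,\vartheta)$ and work in the space $X_T=\mathcal{C}([0,T],\dot{H}^1)^2$ with norm $\|(\vv,\vartheta)\|_{X_T}=\sup_{t\in[0,T]}(\|\vv\|_{\dot{H}^1}+\|\vartheta\|_{\dot{H}^1})$.

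The linear terms are straightforward. Since $e^{t\Delta}$ is a contraction on $\dot{H}^1$, we have $\|e^{t\Delta}\vv_0\|_{\dot{H}^1}\le\|\vv_0\|_{\dot{H}^1}$, and the same holds for $\vartheta_0$. By Lemma \ref{Lem-Tech:Heat-Estimate}(1), together with $T\le 1$, the source terms are controlled by $C T\,(\|\vef\|_{L^\infty_t\dot{H}^1}+\|\eg\|_{L^\infty_t\dot{H}^1})$. Altogether, the linear part has norm at most $\delta_0$.

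For the quadratic terms we use Lemma \ref{Lem-Tech:Heat-Estimate}(2). It suffices to bound $\|(\vv\cdot\vec{\nabla})\vv\|_{L^2}$, and we use the divergence-free form $\text{div}(\vv\otimes\vv)=(\vv\cdot\vec{\nabla})\vv$. By Hölder, $\|(\vv\cdot\vec{\nabla})\vv\|_{L^2}\le\|\vv\|_{L^\infty}\|\vec{\nabla}\vv\|_{L^2}$, but $\dot{H}^1\not\subset L^\infty$. The standard remedy is to write $L^2$ as $L^3\cdot L^6$ and accept a $\dot{H}^{3/2}$ norm: $\|\vv\cdot\vec{\nabla}\vv\|_{L^2}\le C\|\vv\|_{L^6}\|\vec{\nabla}\vv\|_{L^3}\le C\|\vv\|_{\dot{H}^1}\|\vv\|_{\dot{H}^{3/2}}$. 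This requires the auxiliary norm $\sup_t t^{1/4}\|\vv\|_{\dot{H}^{3/2}}$, or equivalently $\|\vv\|_{L^4_t\dot{H}^{3/2}}$, which the heat semigroup provides for $\dot{H}^1$ data.

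The cleanest route is therefore to run the fixed point in $X_T\cap L^4_T\dot{H}^{3/2}$. The estimate $\|e^{t\Delta}\vv_0\|_{L^4_T\dot{H}^{3/2}}\le C\|\vv_0\|_{\dot{H}^1}$ holds; more precisely, by dominated convergence it tends to zero with $T$, but the paper's explicit $T_0$ in (\ref{T0}) suggests the cruder bound $T^{1/4}$ times a $\dot H^{1}$-type control. The bilinear estimate then gives
\[
\Big\|\int_0^t e^{(t-\tau)\Delta}\P\,\text{div}(\vv\otimes\vv)\Big\|_{\dot{H}^1}\le C\,T^{1/4}\,\|\vv\|_{X_T}^2 .
\]
The same bound holds for $\text{div}(\vartheta\vv)=\vv\cdot\vec{\nabla}\vartheta$, with $\|\vv\cdot\vec{\nabla}\vartheta\|_{L^2}\le\|\vv\|_{L^6}\|\vec{\nabla}\vartheta\|_{L^3}$, or with roles swapped. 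This accounts for the factor $(9\delta_0)^4$ in $T_0$, since the condition $T^{1/4}\cdot 9\delta_0\le 1$ is what closes the ball of radius $3\delta_0$ (or $2\delta_0$).

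For the gravitational coupling, Hölder with $\frac12=\frac16+\frac13$ and the embedding $\dot{H}^{1/2}\subset L^3$ give $\|\vartheta\,\veg\|_{L^2}\le C\|\vartheta\|_{\dot{H}^1}\|\veg\|_{\dot{H}^{1/2}}$. Then Lemma \ref{Lem-Tech:Heat-Estimate}(2) yields
\[
\Big\|\int_0^t e^{(t-\tau)\Delta}\P(\vartheta\veg)\Big\|_{\dot{H}^1}\le C\,T^{1/2}\eta_0\|\vartheta\|_{X_T}.
\]
This term is linear in the unknown, and the condition $T^{1/2}\cdot 3\eta_0\le\frac12$ makes it a contraction. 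This is exactly the factor $(3\eta_0)^2$ in $T_0$.

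Combining these estimates, $\Phi$ maps the ball of radius $2\delta_0$ into itself. The difference estimates $\Phi(a)-\Phi(b)$ are identical, since all terms are bilinear or linear, and they show that $\Phi$ is a strict contraction for $T=T_0$. Banach's theorem then gives existence and uniqueness in that ball. Continuity in time comes from the standard strong continuity of the heat semigroup and of Duhamel integrals. Unconditional uniqueness in $\mathcal{C}([0,T_0],\dot{H}^1)$, which is needed later to identify the stationary solution, follows from a Gronwall argument on the difference of two solutions using the same bilinear bounds on short intervals.

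The main obstacle is the nonlinear $\dot{H}^1$ estimate, because $\dot{H}^1\not\subset L^\infty$. I would first try the auxiliary $L^4_t\dot{H}^{3/2}$ (Kato) norm as above and adjust constants to match (\ref{T0}).
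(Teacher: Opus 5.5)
The existence part of your plan can be made to work, but only in the intersection space $\mathcal{C}([0,T],\dot H^1)\cap L^4_T\dot H^{3/2}$, and two points need fixing.
\begin{itemize}
\item Your H\"older bound gives $C\,T^{1/4}\|\vv\|_{L^\infty_T\dot H^1}\|\vv\|_{L^4_T\dot H^{3/2}}$. It does not give $C\,T^{1/4}\|\vv\|_{X_T}^2$ with $X_T$ the pure $\mathcal{C}_t\dot H^1$ norm, as you wrote.
\item To control the $L^4_T\dot H^{3/2}$ part of every Duhamel term you also need the maximal-regularity bound $\|\int_0^t e^{(t-\tau)\Delta}F\,d\tau\|_{L^2_T\dot H^2}\le C\|F\|_{L^2_TL^2_x}$, which is not in Lemma \ref{Lem-Tech:Heat-Estimate}, plus interpolation.
\end{itemize}
Incidentally, the factor $T^{1/4}$ in (\ref{T0}) comes from the bilinear constant. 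The quantity $\|e^{t\Delta}\vv_0\|_{L^4_T\dot H^{3/2}}$ is not $O(T^{1/4})\|\vv_0\|_{\dot H^1}$ uniformly in $\vv_0$; high-frequency data violate this.

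\emph{The genuine gap is uniqueness.} You claim that uniqueness in all of $\mathcal{C}([0,T_0],\dot H^1)$ follows from a Gronwall argument ``using the same bilinear bounds''. Those bounds, however, need $\dot H^{3/2}$ control in time of the factors. Let $\vec w$ be the difference between your solution and an arbitrary solution $\vec v_2\in\mathcal{C}_t\dot H^1$. Your $L^6\cdot L^3$ split applied to the term $(\vec v_2\cdot\vec\nabla)\vec w$ requires $\vec w\in L^2_T\dot H^{3/2}$, and this is not known a priori. This unconditional uniqueness is exactly what Step 3 of the proof of Theorem \ref{Th:Gevrey} uses, because there the stationary solution is only assumed to lie in $\dot H^1$.

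The missing idea is that no auxiliary norm is needed. $\dot H^1$ is subcritical, and the obstacle $\dot H^1\not\subset L^\infty$ appears only because you insist on placing the nonlinearity in $L^2_x$. The paper proceeds as follows.
\begin{itemize}
\item It measures $\vv\otimes\vv$ in $\dot H^{1/2}$ via the product law $\dot H^1\cdot\dot H^1\subset\dot H^{1/2}$. Equivalently, $(\vv\cdot\vec\nabla)\vv\in L^{3/2}\subset\dot H^{-1/2}$ by H\"older with $L^6\cdot L^2$.
\item It lets the heat kernel supply the missing derivatives: $\||\xi|^{3/2}e^{-(t-\tau)|\xi|^2}\|_{L^\infty}\le C(t-\tau)^{-3/4}$, which is integrable in $\tau$.
\item This gives $\|B(e,e)\|_{E_T}\le C\,T^{1/4}\|e\|_{E_T}^2$ directly in $E_T=\mathcal{C}([0,T],\dot H^1)^2$.
\item Picard's theorem with $C_B=CT^{1/4}$ and $C_L=CT^{1/2}\|\veg\|_{L^\infty_t\dot H^{1/2}_x}$ then yields (\ref{T0}).
\end{itemize}
This bilinear estimate, like the linear one for $\vartheta\,\veg$, involves only $\dot H^1$ norms. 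Applied to the difference of two arbitrary solutions in $\mathcal{C}([0,T_0],\dot H^1)$, it gives $\vec w=0$ on an interval whose length depends only on their sup norms and on $\|\veg\|_{L^\infty_t\dot H^{1/2}_x}$. Iterating covers $[0,T_0]$, which is why the paper can defer uniqueness to the standard argument of \cite{PGLR1}.

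To rescue your route instead, you would have to show that every $\mathcal{C}_t\dot H^1$ mild solution automatically lies in $L^4_T\dot H^{3/2}$, for instance via $L^p$ maximal regularity. That is an additional argument you have not supplied.
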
	

\begin{proof}
	The system (\ref{Boussinesq-Evolution}) can be rewritten in the following mild formulation: 
	\begin{equation}\label{Boussinesq-Evolution-Mild}
	\begin{cases}\vspace{2mm}
	\vv(t,\cdot)= & \ds{e^{t\Delta} \vv_0 + \int_{0}^{t}e^{(t-\tau)\Delta} \P(\vef)(\tau,\cdot) d \tau-\int_{0}^{t}e^{(t-\tau)\Delta} \P\, \text{div}(\vv \otimes \vv)(\tau,\cdot)  d \tau} \\ \vspace{2mm}
		&+\ds{ \int_{0}^{t}e^{(t-\tau)\Delta} \P (\vartheta\, \veg(\tau,\cdot))d \tau}, \\ 
	\vartheta(t,\cdot)= & \ds{e^{t\Delta}\vartheta_0 + \int_{0}^{t}e^{(t-\tau)\Delta} \eg(\tau,\cdot) d \tau - \int_{0}^{t}e^{(t-\tau)\Delta} \text{div}(\vartheta\, \vv)(\tau,\cdot) d \tau},
	\end{cases}
	\end{equation}	
	where $e^{t \Delta}\varphi := h_t(\cdot)\ast \varphi$, with $h_t(\cdot)$ denoting the well-known heat kernel. 
	
	\medskip
	
	In order to construct a solution $(\vartheta, \vv)$ of this system, we use the following version of the Picard fixed-point scheme. For a proof, see \cite[Theorem $3.2$]{Chamorro-Yangari}.  
	\begin{Theoreme}[Picard's fixed-point]\label{Th:Picard}  	Let \((E,\|\cdot\|_E)\) be a Banach space and $e_0 \in E$. Let $\| e_0 \|_{E}\leq \delta$. Moreover, let $B: E \times E \to E$ be a bilinear form and $L: E \to E$ be a linear form such that,  for any $e,f \in E$, they satisfy. 
		\begin{equation*}
		\| B(e,f) \|_{E} \leq C_B \| e \|_{E}\, \| f \|_{E} \quad \text{and} \| L(e)\|_{E} \leq C_L \| e \|_{E}. 
		\end{equation*}
		If the constants $C_B>0$ and $C_L>0$ satisfy:
		\begin{equation}\label{Conditions-Picard}
		0<C_L<\frac{1}{3}, \quad 0<9\delta C_B <1 \quad \text{and}\quad C_L+6\delta C_B<1,
		\end{equation}	
		then the equation 
		\begin{equation*}
		e=e_0+B(e,e)+L(e),
		\end{equation*}
		admits a solution $e \in E$, which is uniquely determined by $\| e \|_{E}\leq 3\delta$. 
	\end{Theoreme}	

Within the framework of this theorem, we define the expressions:
	\begin{equation}\label{def-e0}
	e:= \left( \begin{array}{c}
	\vv\\ 
	\vartheta
	\end{array}  \right), \qquad  e_0:= \left( \begin{array}{c}\vspace{2mm}
	\ds{e^{t\Delta} \vv_0+ \int_{0}^{t}e^{(t-\tau)\Delta} \P(\vef)(\tau,\cdot) d \tau}\\ 
	\ds{e^{t\Delta}\vartheta_0 + \int_{0}^{t}e^{(t-\tau)\Delta} \eg(\tau,\cdot) d \tau}
	\end{array}  \right),
	\end{equation}
	and 
	\begin{equation}\label{def-B-L}
	B(e,e):= \left( \begin{array}{c}\vspace{3mm}
	\ds{-\int_{0}^{t}e^{(t-\tau)\Delta} \P\, \text{div}(\vv \otimes \vv)(\tau,\cdot)  d \tau}\\ 
	\ds{- \int_{0}^{t}e^{(t-\tau)\Delta} \text{div}(\vartheta\, \vv)(\tau,\cdot) d \tau}
	\end{array}  \right), \quad  L(e):=  \left( \begin{array}{c}\vspace{3mm}
	\ds{ \int_{0}^{t}e^{(t-\tau)\Delta} \P (\vartheta \, \veg(\tau,\cdot))d \tau}\\ 
	0
	\end{array}  \right).
	\end{equation}
	Thereafter, for a time $0<T\leq 1$, which we will later fix as in (\ref{T0}), we consider the Banach space 
	\[ E_T := \left(  \mathcal{C}\big([0,T], \dot{H}^1(\Rt)\big) \times \mathcal{C}\big([0,T], \dot{H}^1(\Rt)\big) ,  \, \| e \|_{E_T}:= \| \vv \|_{L^\infty_t \dot{H}^1_x} +  \| \vartheta \|_{L^\infty_t \dot{H}^1_x} \right). \]
	In the following technical lemmas, we estimate each term defined above in the norm $\|\cdot \|_{E_T}$. 
	
	\begin{Lemme}
		Let $e_0$ be defined as in (\ref{def-e0}) and let $\delta>0$ be the quantity defined in (\ref{delta}). For $0<T\leq 1$ the following estimate holds:
		\begin{equation*}
		\| e_0 \|_{E_T} \leq \delta.
		\end{equation*}
	\end{Lemme}
	The proof of this estimate directly follows from well-known properties of the heat kernel and the first part of Lemma \ref{Lem-Tech:Heat-Estimate} to deal with the external forces terms. 
	
	\begin{Lemme}\label{Lem-Tech-Bilinear-FK}
		Let $B(\cdot , \cdot)$ be the bilinear form defined in (\ref{def-B-L}). Then, it holds that:
		\begin{equation*}
		\| B(e,e)\|_{E_T}\leq  C\, T^{\frac{1}{4}}\| e \|_{E_T}\, \| e \|_{E_T},
		\end{equation*}
		with $C>0$ a numerical constant.
	\end{Lemme}
	
	\begin{proof}
		Note that both components of $B(\cdot,\cdot)$ are completely similar in their structure. Consequently, it is enough to focus on the first one. For $0<t\leq T$ fixed, using well-known properties of the heat kernel and the Leray projector, we write
		\begin{equation}\label{Estim01}
		\begin{split}
		&\, \left\| \int_{0}^{t}e^{(t-\tau)\Delta} \P \text{div}(\vv \otimes \vv)(\tau,\cdot)d\tau \right\|_{\dot{H}^1} \leq \, C\,  \int_{0}^{t} \left\| |\xi| e^{-(t-\tau)|\xi|^2} |\xi|(\widehat{\vv} \ast \widehat{\vv})(\tau,\cdot) \right\|_{L^2}\, d \tau\\
		\leq &\, C\, \int_{0}^{t} \left\| |\xi|^{\frac{3}{2}} e^{-(t-\tau)|\xi|^2}\right\|_{L^\infty}\, \left\| |\xi|^{\frac{1}{2}}  (\widehat{\vv} \ast \widehat{\vv})(\tau,\cdot) \right\|_{L^2}\, d\tau\leq C\, \int_{0}^{t} (t-\tau)^{-\frac{3}{4}}\, \left\| \vv \otimes \vv (\tau,\cdot)\right\|_{\dot{H}^{\frac{1}{2}}}\, d \tau. 
		\end{split}
		\end{equation}	
		Then, by the product laws in homogeneous Sobolev spaces, we obtain
		\begin{equation}\label{Estim02}
		\begin{split}
		&\, C\, \int_{0}^{t} (t-\tau)^{-\frac{3}{4}}\, \left\| \vv \otimes \vv (\tau,\cdot)\right\|_{\dot{H}^{\frac{1}{2}}}\, d \tau \leq C\, \int_{0}^{t} (t-\tau)^{-\frac{3}{4}}\, \| \vv(\tau,\cdot)\|_{\dot{H}^1}\, \| \vv(\tau,\cdot)\|_{\dot{H}^1}\, d\tau \\
		\leq &\, C \left( \int_{0}^{t}(t-\tau)^{-\frac{3}{4}} d \tau\right)\, \left( \sup_{0\leq \tau\leq T} \| \vv(\tau,\cdot)\|_{\dot{H}^1} \right)^2 \leq C\,  T^{\frac{1}{4}} \| e \|^2_{E_T},
		\end{split}
		\end{equation}
		from which the desired estimate follows.  
	\end{proof}
	
	\begin{Lemme}\label{Lem-Tech-Lin-FK}
		Let $L(\cdot)$ be the linear form defined in (\ref{def-B-L}), where $\veg \in \mathcal{C}_t \dot{H}^{\frac{1}{2}}_x$. Then, we have
		\begin{equation*}
		\| L(e)\|_{E_T} \leq C\,  T^{\frac{1}{2}}  \| \veg \|_{L^\infty_t \dot{H}^{\frac{1}{2}}_{x}}\, \| e \|_{E_T}.
		\end{equation*}
	\end{Lemme}
	
	\begin{proof}
		For $0<t \leq T$ fixed, using the second part of Lemma \ref{Lem-Tech:Heat-Estimate}, Hölder inequalities (with $\frac{1}{2}= \frac{1}{6}+\frac{1}{3}$) together with the continuous Sobolev embedding $\dot{H}^{1}(\Rt)\subset L^6(\Rt)$ and $\dot{H}^{\frac{1}{2}}(\Rt)\subset L^3(\Rt)$, we write
		\begin{equation*}
		\begin{split}
		&\, \left\| \int_{0}^{t}e^{(t-\tau)\Delta} \P (\vartheta(\tau,\cdot)\, \veg)d \tau \right\|_{\dot{H}^1} \leq  C\, \| \vartheta\, \veg \|_{L^2_t L^2_x} \leq C\, t^{\frac{1}{2}}\, \left( \sup_{0\leq \tau\leq t}\| \vartheta(\tau,\cdot)\, \veg \|_{L^2}\right) \\
		\leq &\, C\, t^{\frac{1}{2}} \left( \sup_{0\leq \tau \leq t} \| \vartheta(\tau,\cdot)\|_{L^6} \| \veg \|_{ L^3}  \right) \leq C\, T^{\frac{1}{2}}\| \veg \|_{L^\infty_t \dot{H}^{\frac{1}{2}}_{x}}\, \| e \|_{E_T}. 
		\end{split}
		\end{equation*}
	\end{proof}
	
	Returning to the framework of Theorem \ref{Th:Picard}, from Lemmas \ref{Lem-Tech-Bilinear-FK} and \ref{Lem-Tech-Lin-FK} we define $C_B:= C\, T^{\frac{1}{4}}$ and $C_L:= C\,  T^{\frac{1}{2}} \| \veg \|_{L^\infty_t \dot{H}^{\frac{1}{2}}_{x}}$, respectively. Then, setting the time $T$ as in $T_0$, we satisfy all the conditions stated in (\ref{Conditions-Picard}), yielding the existence of a solution $(\vartheta, \vv)\in \mathcal{C}([0,T_0], \dot{H}^1(\Rt))$ to the system (\ref{Boussinesq-Evolution-Mild}).
	
	\medskip
	
	The uniqueness of this solution follows from known arguments as in \cite[Proposition 7.1]{PGLR1}. Proposition \ref{Prop-Tech:Fujita-Kato} is thus proven.
\end{proof}

{\bf Step 2}. \emph{Gevrey estimates for Fujita-Kato mild solutions}.  In the following proposition, we prove that the solution obtained in the proposition above belongs to a certain Gevrey class, provided that the external forces $\vef, \eg$ and the gravitational acceleration $\veg$ satisfy additional Gevrey regularity. To this end, for the parameter $r>0$, recall that we define the operator $e^{r \sqrt{-t \Delta}}(\cdot)$ by the symbol $e^{r \sqrt{t}|\xi|}$. 

\begin{Proposition}\label{Prop-Tech:Fujita-Kato-Gevrey}
	Under the same hypotheses as in Proposition \ref{Prop-Tech:Fujita-Kato}, let $r>0$ be a parameter and assume in addition that 
	\begin{equation}\label{Assumption-Data-Gevrey}
	e^{r \sqrt{-t \Delta}}\left( \vef, \eg\right) \in \mathcal{C}\big( [0,1], \dot{H}^1(\Rt)\big) 
	\quad \text{and}\quad  
	e^{r\sqrt{-t\Delta}}\,\veg \in \mathcal{C}\big([0,1], \dot{H}^{\frac{1}{2}}(\Rt)\big).
	\end{equation}
	Define the quantities 
	\begin{equation}\label{delta1}
	\delta_1:= C ( e^{r^2}+1)\left( \| \vv_0\|_{\dot{H}^1}
	+ \| \vartheta_0\|_{\dot{H}^1} 
	+ \left\| e^{r\sqrt{-t\Delta}}\, \vef \right\|_{L^\infty_t \dot{H}^1_x}
	+ \left\| e^{r\sqrt{-t\Delta}}\, \eg \right\|_{L^\infty_t \dot{H}^1_x}\right), 
	\end{equation}
	and 
	\begin{equation}\label{eta1}
	\eta_1:= C \left\| e^{r\sqrt{-t\Delta}} \, \veg\right\|_{L^\infty_t \dot{H}^{\frac{1}{2}}_x}, 
	\end{equation}
	where $C>0$ is a numerical constant. Then, for the time $0<T_0\leq 1$ given in (\ref{T0}), there exists a time
	\begin{equation}\label{T1}
	T_1= \frac{1}{2} \min\left(1, \frac{1}{(9\delta_1)^4}, \frac{1}{(3\eta_1)^2} \right)<T_0,
	\end{equation}
	such that the solution 
	$(\vv, \vartheta)\in \mathcal{C}([0,T_0], \dot{H}^1(\Rt))$ of the system (\ref{Boussinesq-Evolution}), constructed in Proposition \ref{Prop-Tech:Fujita-Kato}, satisfies
	\begin{equation}\label{Gevrey-Mild-Solutions}
	e^{r \sqrt{-t \Delta}} \big(\vv, \vartheta\big) \in \mathcal{C}\big(]0,T_1], \dot{H}^1 (\Rt)\big).  
	\end{equation} 
\end{Proposition}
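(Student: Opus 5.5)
We run the Picard scheme of Theorem \ref{Th:Picard} for the weighted unknown instead of $(\vv,\vartheta)$. Set $\vec{V}(t):=e^{r\sqrt{-t\Delta}}\vv(t)$ and $\Theta(t):=e^{r\sqrt{-t\Delta}}\vartheta(t)$, and apply $e^{r\sqrt{-t\Delta}}$ to the mild formulation (\ref{Boussinesq-Evolution-Mild}). This rewrites the system as a fixed-point equation $E=E_0+\widetilde B(E,E)+\widetilde L(E)$ in $E_{T}$. Here $\widetilde B(E,E)$ has first component
\[
-\int_0^t e^{r\sqrt{-t\Delta}}e^{(t-\tau)\Delta}\P\,\text{div}\big(e^{-r\sqrt{-\tau\Delta}}\vec V\otimes e^{-r\sqrt{-\tau\Delta}}\vec V\big)\,d\tau ,
\]
and $\widetilde L$ and $E_0$ are defined analogously from (\ref{def-e0})--(\ref{def-B-L}). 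The claim then follows once we prove the three estimates
\[
\|E_0\|_{E_T}\le\delta_1,\qquad \|\widetilde B(E,E)\|_{E_T}\le C\,T^{1/4}\|E\|_{E_T}^2,\qquad \|\widetilde L(E)\|_{E_T}\le C\,T^{1/2}\eta_1\|E\|_{E_T}.
\]
With these, choosing $T=T_1$ as in (\ref{T1}) verifies (\ref{Conditions-Picard}) and yields a unique weighted solution with $\|E\|_{E_{T_1}}\le 3\delta_1$. Since $\delta_1\ge\delta_0$ and $\eta_1\ge\eta_0$, we have $T_1\le T_0$. The function $e^{-r\sqrt{-t\Delta}}E$ solves (\ref{Boussinesq-Evolution-Mild}) and has norm at most $3\delta_1$ in $E_{T_1}$, and the symbol $e^{-r\sqrt t|\xi|}\le 1$ shows that $\|e^{-r\sqrt{-t\Delta}}E\|_{E_{T_1}}\le\|E\|_{E_{T_1}}$. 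By the uniqueness in Proposition \ref{Prop-Tech:Fujita-Kato} it therefore coincides with $(\vv,\vartheta)$ on $[0,T_1]$. This gives (\ref{Gevrey-Mild-Solutions}), with continuity for $t>0$ inherited from the Picard space.

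The linear terms use the elementary symbol bounds. For the free evolution,
\[
e^{r\sqrt t|\xi|-t|\xi|^2}\le e^{r^2/4}\le e^{r^2}+1,
\]
since $\max_x(rx-x^2)=r^2/4$. This bounds $e^{r\sqrt{-t\Delta}}e^{t\Delta}\vv_0$ in $\dot H^1$ by $(e^{r^2}+1)\|\vv_0\|_{\dot H^1}$, which explains the factor in (\ref{delta1}). For the forcing terms we use the subadditivity
\[
\sqrt t\le\sqrt{t-\tau}+\sqrt\tau,
\]
which gives
\[
e^{r\sqrt t|\xi|}e^{-(t-\tau)|\xi|^2}\le e^{r\sqrt{t-\tau}|\xi|-(t-\tau)|\xi|^2}\,e^{r\sqrt\tau|\xi|}.
\]
So the weight is split between the heat kernel, costing a factor $e^{r^2/4}$, and the datum at time $\tau$, which is controlled by hypothesis (\ref{Assumption-Data-Gevrey}). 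Lemma \ref{Lem-Tech:Heat-Estimate} then bounds the force terms by $\delta_1$.

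The main obstacle is the nonlinear terms, where we need the weight to behave well on products, namely
\[
\big\|e^{r\sqrt{-\tau\Delta}}(\varphi_1\varphi_2)\big\|_{\dot H^{1/2}}\le C\,\big\|e^{r\sqrt{-\tau\Delta}}\varphi_1\big\|_{\dot H^1}\big\|e^{r\sqrt{-\tau\Delta}}\varphi_2\big\|_{\dot H^1}.
\]
We would prove this in Fourier variables. The triangle inequality $|\xi|\le|\xi-\eta|+|\eta|$ gives
\[
e^{r\sqrt\tau|\xi|}\big|\widehat{\varphi_1}\ast\widehat{\varphi_2}\big|(\xi)\le\Big(\big|\widehat{e^{r\sqrt{-\tau\Delta}}\varphi_1}\big|\ast\big|\widehat{e^{r\sqrt{-\tau\Delta}}\varphi_2}\big|\Big)(\xi).
\]
We then apply the usual $\dot H^1\times\dot H^1\to\dot H^{1/2}$ product law to the functions whose Fourier transforms are these absolute values; their $\dot H^s$ norms are unchanged. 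Two further facts make the remaining estimates go through as in Lemmas \ref{Lem-Tech-Bilinear-FK} and \ref{Lem-Tech-Lin-FK}. First, the subadditivity above gives $e^{r\sqrt t|\xi|}\le e^{r\sqrt{t-\tau}|\xi|}e^{r\sqrt\tau|\xi|}$, and the extra heat-kernel factor is absorbed via
\[
\big\||\xi|^{3/2}e^{r\sqrt{t-\tau}|\xi|-(t-\tau)|\xi|^2}\big\|_{L^\infty}\le C_r\,(t-\tau)^{-3/4},
\]
with the $r$-dependence put into the numerical constant of (\ref{delta1}) or into $T$. Second, for the coupling term $\vartheta\,\veg$ the same Fourier-positivity argument gives
\[
\big\|e^{r\sqrt{-\tau\Delta}}(\vartheta\veg)\big\|_{L^2}\le C\,\|\Theta\|_{\dot H^1}\,\big\|e^{r\sqrt{-\tau\Delta}}\veg\big\|_{\dot H^{1/2}},
\]
using the Sobolev embeddings into $L^6$ and $L^3$. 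The term $\text{div}(\vartheta\vv)$ is handled exactly like $\text{div}(\vv\otimes\vv)$.
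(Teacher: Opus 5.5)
Your proposal is correct and follows the paper's route. You run Picard's scheme (Theorem \ref{Th:Picard}) in a Gevrey-weighted space; working with $E=e^{r\sqrt{-t\Delta}}(\vv,\vartheta)$ in $E_T$ is the same as the paper's scheme in $F_T$. You use the same three estimates, the same pointwise bound $e^{\lambda|\xi|}\le e^{\lambda|\xi-\eta|}e^{\lambda|\eta|}$ with Fourier positivity for the products, and the same identification with $(\vv,\vartheta)$ by uniqueness.

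The one real difference is your splitting $e^{r\sqrt t|\xi|}\le e^{r\sqrt{t-\tau}|\xi|}e^{r\sqrt{\tau}|\xi|}$, with the first factor absorbed by the heat kernel. The paper does not do this. In Lemmas \ref{Lem-Tech-Bilinear-FK-Gevrey} and \ref{Lem-Tech-Linear-FK-Gegrey} (and, for time-dependent data, Lemma \ref{Lem-Tech-Data-FK-Gevrey}) it applies the weight $e^{r\sqrt t|\xi|}$ at the final time directly to $\tilde v(\tau)$, $\tilde\vartheta(\tau)$, and then bounds the result by the $F_T$-norm. That norm only controls $e^{r\sqrt{\tau}|\xi|}$ applied to the time-$\tau$ functions, so since $\tau\le t$ the step is not justified as written. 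Your subadditivity argument is what closes it.

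The price is an $r$-dependent constant: $\sup_{y>0}y^{a}e^{ry-y^2}\le C_a e^{r^2/2}$, using $ry\le\frac{r^2}{2}+\frac{y^2}{2}$. Where it goes matters:
\begin{itemize}
\item \emph{Bilinear term.} It can be absorbed into the factor $e^{r^2}+1$ of $\delta_1$, because (\ref{Conditions-Picard}) only involves the product $\delta C_B$. For this, take $\delta$ in Theorem \ref{Th:Picard} to be the actual bound $\lesssim e^{r^2/4}(\cdots)$ on $E_0$, not $\delta_1$ itself.
\item \emph{Coupling term $\vartheta\,\veg$.} It cannot go into $\delta_1$, since the requirement $C_L<\frac13$ does not involve $\delta$. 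You must put it into $\eta_1$, i.e.\ $\eta_1=Ce^{r^2/2}\|e^{r\sqrt{-t\Delta}}\veg\|_{L^\infty_t\dot H^{1/2}_x}$, or equivalently shrink $T_1$.
\end{itemize}
So (\ref{T1}) holds with this modified $\eta_1$ rather than with a purely numerical constant in (\ref{eta1}). This is harmless downstream, because Step 3 redefines $\eta_1$ with an $r$-dependent constant anyway.

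You are also right to claim only $T_1\le T_0$: the strict inequality in the statement fails when both minima equal $\frac12$.
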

\begin{proof}  We begin by explaining the general strategy of the proof. 
	Using the formalism of the fixed-point problem 
	\[ e = e_0 + B(u,u) + L(e), \]
	defined in (\ref{def-e0}) and (\ref{def-B-L}), for a time $0<T \leq T_0 \leq 1$, which we will later fix as $T_1$, we shall construct a solution $e:=(\tilde{v}, \tilde{\vartheta})$ via Picard's fixed-point argument (see again Theorem \ref{Th:Picard}) in the Banach space 
	\[
	F_T :=  \left\{ \varphi \in \mathcal{C}([0,T], \dot{H}^1(\Rt)) : \, e^{r \sqrt{-t \Delta}}\, \varphi \in \mathcal{C}(]0,T], \dot{H}^1(\Rt)) \ \text{and} \ \| \varphi \|_{F_T} := \left\| e^{r \sqrt{-t \Delta}}\, \varphi \right\|_{L^\infty_t \dot{H}^1_x} < +\infty \right\}.
	\]
	Thereafter, since we have the continuous embedding $F_T \subset \mathcal{C}\big([0,T_1], \dot{H}^1(\Rt)\big)$, the uniqueness of solutions in this larger space yields the identity  $(\tilde{v}, \tilde{\vartheta})=(\vv, \vartheta)$ in the interval of time $[0,T_1]$. From this identity, we obtain  the desired conclusion~(\ref{Gevrey-Mild-Solutions}).

\begin{Lemme}\label{Lem-Tech-Data-FK-Gevrey}
	Let $\vv_0, \vartheta_0 \in \dot{H}^1(\Rt)$, $\vef, \eg \in \mathcal{C}\big( [0,1],\dot{H}^{1}(\Rt)\big)$ be the same initial data as in Proposition \ref{Prop-Tech:Fujita-Kato}, and let $e_0$ defined in (\ref{def-e0}).  Assume (\ref{Assumption-Data-Gevrey}) and define the quantity $\delta_1$ as in (\ref{delta1}).   Then, for $0<T\leq 1$ it holds that
	\[
	\| e_0 \|_{F_T} \leq \delta_1.
	\]
\end{Lemme}
\begin{proof}
For $\vv_0 \in \dot{H}^1(\Rt)$, using well-known properties of the heat kernel, for $0<t\leq T$ we write
\begin{equation*}
\begin{split}
\left\| e^{r\sqrt{-t \Delta}} e^{t \Delta} \vv_0 \right\|_{\dot{H}^1} =&\, \left\|\, |\xi| e^{r \sqrt{t}|\xi|}e^{-t|\xi|^2} \widehat{\vv}_0 \right\|_{L^2}\\
=&  \left\|\, |\xi| e^{r |\sqrt{t}\xi|-|\sqrt{t}\xi|^2} \widehat{\vv}_0 \right\|_{L^2\left(|\xi|<\frac{r}{\sqrt{t}}\right)}+\left\|\, |\xi| e^{r |\sqrt{t}\xi|-|\sqrt{t}\xi|^2} \widehat{\vv}_0 \right\|_{L^2\left(|\xi|\geq \frac{r}{\sqrt{t}}\right)}.
\end{split}
\end{equation*}	
For the first term, since $|\xi| \leq \frac{r}{\sqrt{t}}$, we have $e^{r|\sqrt{t}\xi|-|\sqrt{t}\xi|^2}\leq e^{r|\sqrt{t}\xi|}\leq e^{r^2}$. For the second term, since $|\xi| > \frac{r}{\sqrt{t}}$, we obtain $r |\sqrt{t}\xi|-|\sqrt{t}\xi|^2\leq 0$, hence $e^{r |\sqrt{t}\xi|-|\sqrt{t}\xi|^2}  \leq 1$.  

\medskip

The expression $\left\| e^{r\sqrt{-t \Delta}} e^{t \Delta} \vartheta_0 \right\|_{\dot{H}^1}$ follows from the same arguments. Therefore, we can write 
\begin{equation*}
\left\| e^{r\sqrt{-t \Delta}} e^{t \Delta} \vv_0 \right\|_{L^\infty_t \dot{H}^1} + \left\| e^{r\sqrt{-t \Delta}} e^{t \Delta}  \vartheta_0\right\|_{L^\infty_t \dot{H}^1} \leq (e^{r^2}+1)\left( \| \vv_0 \|_{\dot{H^1}}+ \| \vartheta_0 \|_{\dot{H}^1} \right). 
\end{equation*}

On the other hand, by assumption (\ref{Assumption-Data-Gevrey}),  the first part of Lemma \ref{Lem-Tech:Heat-Estimate} and as $0<T\leq 1$, we directly obtain 
\begin{equation*}
\begin{split}
&\, \left\| e^{r\sqrt{-t \Delta}}  \left( \int_{0}^{t}e^{(t-\tau)\Delta} \P(\vef)(\tau,\cdot) d \tau\right) \right\|_{L^\infty_t \dot{H}^1_x} + \left\| e^{r\sqrt{-t \Delta}}  \left( \int_{0}^{t}e^{(t-\tau)\Delta} \eg(\tau,\cdot) d \tau\right) \right\|_{L^\infty_t \dot{H}^1_x} \\
\leq&\, C\left( \left\| e^{r\sqrt{-t\Delta}}\, \vef \right\|_{L^\infty_t \dot{H^1}_x}+ \left\| e^{r\sqrt{-t\Delta}}\, \eg \right\|_{L^\infty_t \dot{H}^1_x} \right).
\end{split}
\end{equation*}
\end{proof}	

\begin{Lemme}\label{Lem-Tech-Bilinear-FK-Gevrey}
	Let $B(\cdot, \cdot)$ be the bilinear form defined in (\ref{def-B-L}). Then it holds that 
	\[
	\| B(e,e) \|_{F_T} \leq C T^{\frac{1}{4}}\| e \|_{F_T}\, \| e\|_{F_T}.
	\]
\end{Lemme}

\begin{proof}
	The proof follows the same arguments as in the proof of Lemma \ref{Lem-Tech-Bilinear-FK}. From the estimates given in (\ref{Estim01}) and well-known properties of the heat kernel, for any $\tilde{v} \in F_T$ we write 
	\begin{equation*}
	\left\| e^{r\sqrt{-t \Delta}}   \left(  \int_{0}^{t}e^{(t-\tau)\Delta} \P \text{div}(\vv \otimes \vv)(\tau,\cdot)d\tau \right) \right\|_{\dot{H}^1} \leq C \int_{0}^{t}(t-\tau)^{-\frac{3}{3}}\, \left\|\, |\xi|^{\frac{1}{2}}\,  e^{r \sqrt{t}|\xi|} (\widehat{\tilde{v}} \ast \widehat{\tilde{v}} )(\tau,\cdot) \right\|_{L^2}\, d\tau. 
	\end{equation*}	
	To control the last expression, observe that for any $\xi, \eta \in \Rt$ we have $e^{r \sqrt{t}|\xi|}\leq e^{r\sqrt{t}|\xi-\eta|} e^{r\sqrt{t}|\eta|}$; hence we obtain the pointwise inequality
	\begin{equation}\label{Pointwise-Fourier-u}
		\left| e^{r \sqrt{t}|\xi|} (\widehat{\tilde{v}} \ast \widehat{\tilde{v}} )(\tau,\xi) \right| \leq \left(  \left( e^{r \sqrt{t}|\xi|}  | \widehat{\tilde{v}}| \right) \ast \left( e^{r \sqrt{t}|\xi|}  | \widehat{\tilde{v}}| \right)  \right)(\tau, \xi). 
	\end{equation}
	Using this inequality and following the same arguments as in (\ref{Estim02}), we have
	\begin{equation*}
	\begin{split}
&\, 	C \int_{0}^{t}(t-\tau)^{-\frac{3}{4}}\, \left\|\, |\xi|^{\frac{1}{2}}\,  e^{r \sqrt{t}|\xi|} (\widehat{\tilde{v}} \ast \widehat{\tilde{v}} )(\tau,\cdot) \right\|_{L^2}\, d\tau \\
\leq &\, C \int_{0}^{t}(t-\tau)^{-\frac{3}{4}}\, \left\|\, |\xi|^{\frac{1}{2}}\,  \left(  \left( e^{r \sqrt{t}|\xi|}  | \widehat{\tilde{v}}| \right) \ast \left( e^{r \sqrt{t}|\xi|}  | \widehat{\tilde{v}}| \right)  \right) (\tau,\cdot) \right\|_{L^2}\, d\tau\\
\leq &\, 	C \int_{0}^{t}(t-\tau)^{-\frac{3}{4}}\, \left\|  \left( e^{r \sqrt{t}|\xi|}  | \widehat{\tilde{v}}| \right)^{\vee}\, \left( e^{r \sqrt{t}|\xi|}  | \widehat{\tilde{v}}| \right)^{\vee} (\tau,\cdot) \right\|_{\dot{H}^{\frac{1}{2}}}\, d\tau \\
\leq &\, C \int_{0}^{t}(t-\tau)^{-\frac{3}{4}}\, \left\|  \left( e^{r \sqrt{t}|\xi|}  | \widehat{\tilde{v}}| \right)^{\vee}(\tau,\cdot) \right\|_{\dot{H}^1}\, \left\|  \left( e^{r \sqrt{t}|\xi|}  | \widehat{\tilde{v}}| \right)^{\vee} (\tau,\cdot) \right\|_{\dot{H}^{1}}\, d\tau\\
=&\,  C \int_{0}^{t}(t-\tau)^{-\frac{3}{4}}\, \left\| e^{t\sqrt{-t \Delta}} \tilde{v} (\tau,\cdot) \right\|_{\dot{H}^1}\, \left\|   e^{t\sqrt{-t \Delta}} \tilde{v} (\tau,\cdot)  \right\|_{\dot{H}^{1}}\, d\tau\\
\leq &\, C\, T^{\frac{1}{4}}\, \| e \|^2_{F_T}.
	\end{split}
	\end{equation*}
	
The other term $\ds{ \int_{0}^{t}e^{(t-\tau)\Delta} \text{div}(\tilde{\vartheta}\, \tilde{v})(\tau,\cdot) d \tau}$ is treated in the same manner.  
\end{proof}

\begin{Lemme}\label{Lem-Tech-Linear-FK-Gegrey} Let $L(\cdot)$ be the linear form defined in (\ref{def-B-L}), where   $\veg \in \mathcal{C}_t \dot{H}^{\frac{1}{2}}_x$.  Assume (\ref{Assumption-Data-Gevrey}) and define $\eta_1$ as in (\ref{eta1}). Then, it holds that 
	\begin{equation*}
	\| L(e)\|_{F_T} \leq \eta_1\, T^{\frac{1}{2}} \,  \| e \|_{F_T}. 
	\end{equation*} 
\end{Lemme}	
\begin{proof}  Applying the second part of Lemma \ref{Lem-Tech:Heat-Estimate} and the  pointwise inequality
	\begin{equation}\label{Pointwise-Fourier-theta}
	\left| e^{r \sqrt{t}|\xi|} (\widehat{\tilde{\vartheta}} \ast \widehat{\veg} )(\tau,\xi) \right| \leq \left(  \left( e^{r \sqrt{t}|\xi|}  | \widehat{\tilde{\vartheta}}| \right) \ast \left( e^{r \sqrt{t}|\xi|}  | \widehat{\veg}| \right)  \right)(\tau, \xi),  
	\end{equation}
we write
\begin{equation*}
\begin{split}
\left\| e^{r \sqrt{-t\Delta}} \left( \int_{0}^{t}e^{(t-\tau)\Delta} \P (\vartheta(\tau,\cdot)\, \veg)d \tau  \right)\right\|_{\dot{H}^1} \leq &\,  C\, \left\| e^{r\sqrt{-\Delta}} \left( \tilde{\vartheta} \, \tilde{v} \right) \right\|_{L^{2}_t L^2_x} \\
\leq &\, C  \left\|   \left( e^{r \sqrt{t}|\xi|}  | \widehat{\tilde{\vartheta}}| \right)^{\vee}\,   \left( e^{r \sqrt{t}|\xi|}  | \widehat{\veg}| \right)^{\vee} \right\|_{L^2_t L^2_x}. 
\end{split}
\end{equation*}		
From  H\"older's inequalities (with $\frac{1}{2}=\frac{1}{6}+\frac{1}{3}$), together with the continuous Sobolev embeddings $\dot{H}^{1}(\Rt)\subset L^6(\Rt)$ and $\dot{H}^{\frac{1}{2}}(\Rt)\subset L^3(\Rt)$, we obtain
\begin{equation*}
\begin{split}
C  \left\|   \left( e^{r \sqrt{t}|\xi|}  | \widehat{\tilde{\vartheta}}| \right)^{\vee}\,   \left( e^{r \sqrt{t}|\xi|}  | \widehat{\veg}| \right)^{\vee} \right\|_{L^2_t L^2_x} \leq &\, C \, t^{\frac{1}{2}} \left\| \left( e^{r \sqrt{t}|\xi|}  | \widehat{\tilde{\vartheta}}| \right)^{\vee}\right\|_{L^\infty_t L^6_x}\,  \left\| \left( e^{r \sqrt{t}|\xi|}  | \widehat{\veg}| \right)^{\vee} \right\|_{L^\infty_t L^3_x} \\
\leq &\, C\, T^{\frac{1}{2}} \left\| \left( e^{r \sqrt{t}|\xi|}  | \widehat{\veg}| \right)^{\vee} \right\|_{L^\infty_t \dot{H}^{\frac{1}{2}}_x}\, \left\| \left( e^{r \sqrt{t}|\xi|}  | \widehat{\tilde{\vartheta}}| \right)^{\vee}\right\|_{L^\infty_t \dot{H}^1_x} \\
=&\,  C\, T^{\frac{1}{2}} \left\| e^{r\sqrt{-t\Delta}} \veg  \right\|_{L^\infty_t \dot{H}^{\frac{1}{2}}_x}\, \left\| e^{r\sqrt{-t\Delta}} \tilde{\vartheta} \right\|_{L^\infty_t \dot{H}^1_x} \\
\leq  &\, \eta_1 \, T^{\frac{1}{2}}\, \| e \|_{F_T}. 
\end{split}
\end{equation*}	
\end{proof}	

With Lemmas \ref{Lem-Tech-Data-FK-Gevrey}, \ref{Lem-Tech-Bilinear-FK-Gevrey}, and \ref{Lem-Tech-Linear-FK-Gegrey} at hand, we set the time $T$ to be $T_1$ given in (\ref{T1}). Proposition \ref{Prop-Tech:Fujita-Kato-Gevrey} now follows from Theorem \ref{Th:Picard}.  
\end{proof}	

{\bf Step 3}. \emph{Gevrey regularity for $\dot{H}^1$-stationary solutions}. 
Let $(\vu, \theta)\in \dot{H}^1(\Rt)$ be a solution of the stationary Boussinesq system (\ref{Boussinesq}) associated with the external forces $\vf, g \in \dot{H}^{-1}(\Rt)$ and the gravitational acceleration $\vvg \in L^{\frac{3}{2}}(\Rt)\cap \dot{H}^{\frac{1}{2}}(\Rt)$.

\begin{Lemme} 
	Assume that $\vf, g$ and $\vvg$ satisfy (\ref{Assumption-Forces-Gevrey}) for a parameter $r>0$. For any $0<t\leq 1$, define
	\[ 	\vef(t,\cdot):= e^{-\frac{2r}{3}\sqrt{-t\Delta}} \left( e^{\frac{2r}{3}\sqrt{-t\Delta}} \vf\right), 
	\quad 
	\eg(t,\cdot):= e^{-\frac{2r}{3}\sqrt{-t\Delta}} \left( e^{\frac{2r}{3}\sqrt{-t\Delta}} g\right), \]
	\[ \veg(t,\cdot):=e^{-\frac{2r}{3}\sqrt{-t\Delta}} \left( e^{\frac{2r}{3}\sqrt{-t\Delta}} \vvg\right). \]
Then it holds that $\vef, \eg \in \mathcal{C}\big([0,1],\dot{H}^{1}(\Rt)\big)$, $\veg \in \mathcal{C}\big([0,1],\dot{H}^{\frac{1}{2}}(\Rt)\big)$, and they satisfy (\ref{Assumption-Data-Gevrey}) with the parameter $\frac{2r}{3}$. More precisely, there exists a constant $C_r>0$, depending only on $r$, such that 
	\begin{equation}\label{Control-Gevrey}
	\left\| e^{\frac{2r}{3}\sqrt{-t \Delta}} \left( \vef, \eg \right)  \right\|_{L^\infty_t \dot{H}^{1}_x} 
	\leq C_r \left\| \left( \vf, g \right)\right\|_{G^{-1}_r} 
	\quad \text{and}\quad 
	\left\| e^{\frac{2r}{3}\sqrt{-t \Delta}} \,  \veg \right\|_{L^\infty_t \dot{H}^{\frac{1}{2}}_x} 
	\leq \| \vvg \|_{G^{\frac{1}{2}}_{r}}. 
	\end{equation}
\end{Lemme}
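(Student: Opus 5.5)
The plan is to first observe that the definitions collapse. Since $e^{-\frac{2r}{3}\sqrt{-t\Delta}}$ and $e^{\frac{2r}{3}\sqrt{-t\Delta}}$ are Fourier multipliers with inverse symbols $e^{\mp\frac{2r}{3}\sqrt{t}|\xi|}$, we have $\vef(t,\cdot)=\vf$, $\eg(t,\cdot)=g$ and $\veg(t,\cdot)=\vvg$ for every $0\le t\le 1$. These sources are therefore constant in time. The whole statement then reduces to two things: a frequency-wise comparison of weights showing that Gevrey regularity of radius $r$ controls the quantities weighted by $e^{\frac{2r}{3}\sqrt{t}|\xi|}$, and a dominated convergence argument for continuity in time.

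\emph{Step 1: the forces $\vf,g$.} For $0\le t\le 1$ I write, on the Fourier side,
\[
\left\| e^{\frac{2r}{3}\sqrt{-t\Delta}}\vf\right\|_{\dot H^1}=\left\| |\xi|^{2}e^{(\frac{2r}{3}\sqrt t-r)|\xi|}\,\Big(|\xi|^{-1}e^{r|\xi|}\widehat{\vf}\Big)\right\|_{L^2}\le \Big(\sup_{\xi}|\xi|^2e^{-\frac r3|\xi|}\Big)\|\vf\|_{G^{-1}_r}.
\]
Here I use $\frac{2r}{3}\sqrt t-r\le -\frac r3$. The supremum equals $C_r=(6/r)^2e^{-2}$. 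This is the key point: the gain of two derivatives, needed to pass from $\dot H^{-1}$ to $\dot H^{1}$, is absorbed at high frequencies by the leftover exponential decay $e^{-\frac r3|\xi|}$. It is harmless at low frequencies, where $|\xi|^2$ is small. The same computation applies to $g$, and taking $t=0$ shows in particular that $\vf,g\in\dot H^1(\Rt)$.

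\emph{Step 2: the gravitational term.} Here the Sobolev index is unchanged, so I will only use the pointwise bound $e^{\frac{2r}{3}\sqrt t|\xi|}\le e^{r|\xi|}$. This gives $\|e^{\frac{2r}{3}\sqrt{-t\Delta}}\veg\|_{\dot H^{1/2}}\le\|\vvg\|_{G^{1/2}_r}$ uniformly in $t$, with constant $1$ as claimed in (\ref{Control-Gevrey}).

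\emph{Step 3: continuity in time.} Since $\vef,\eg,\veg$ are constant in $t$ with values in $\dot H^1$ (respectively $\dot H^{1/2}$), they trivially belong to $\mathcal C([0,1],\dot H^1)$ (respectively $\mathcal C([0,1],\dot H^{1/2})$). For the weighted maps $t\mapsto e^{\frac{2r}{3}\sqrt{-t\Delta}}\vf$, I take $t_n\to t$ and apply dominated convergence to
\[
|\xi|^2\,\big|e^{\frac{2r}{3}\sqrt{t_n}|\xi|}-e^{\frac{2r}{3}\sqrt t|\xi|}\big|^2\,|\widehat{\vf}|^2 .
\]
The integrand tends to zero pointwise and is dominated by $4C_r^2|\xi|^{-2}e^{2r|\xi|}|\widehat{\vf}|^2\in L^1$. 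The same argument handles $g$, and $\veg$ with the dominating function $|\xi|e^{2r|\xi|}|\widehat{\vvg}|^2$. This verifies (\ref{Assumption-Data-Gevrey}) with the parameter $\frac{2r}{3}$.

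I expect no serious obstacle. The only delicate point is Step 1, namely checking that the loss of two derivatives is compensated exactly through the strict inequality $\frac{2r}{3}<r$. This is precisely why the radius is reduced by the factor $\frac23$, and why the constant $C_r$ blows up as $r\to0$.
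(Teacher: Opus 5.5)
Your proposal is correct and follows essentially the same route as the paper. The sources are time-independent. The two extra derivatives needed to pass from $G^{-1}_r$ to the $\dot{H}^1$-norm weighted by $e^{\frac{2r}{3}\sqrt{t}|\xi|}$ are absorbed by the surplus exponential decay; the paper does this via $x^4e^{2x}\leq C e^{3x}$ with $x=\frac{2r}{3}|\xi|$, which yields the same $C_r=(6/r)^2e^{-2}$. Finally, $\veg$ is handled by the pointwise bound $e^{\frac{2r}{3}\sqrt{t}|\xi|}\leq e^{r|\xi|}$.

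Your dominated-convergence argument for the continuity of $t\mapsto e^{\frac{2r}{3}\sqrt{-t\Delta}}(\vef,\eg,\veg)$ is a useful addition, since the paper only asserts this from time-independence.
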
	

\begin{proof} 
	Since $\vef, \eg$ and $\vvg$ are time-independent functions, and since we have the continuous embeddings 
	$G^{1}_{\frac{2r}{3}\sqrt{t}}(\Rt)\subset \dot{H}^{1}(\Rt)$ 
	and 
	$G^{\frac{1}{2}}_{\frac{2r}{3}\sqrt{t}}(\Rt)\subset \dot{H}^{\frac{1}{2}}(\Rt)$, 
	it is sufficient to verify that $\vef, \eg$, and $\veg$ satisfy (\ref{Assumption-Data-Gevrey}) with the parameter $\frac{2r}{3}$. 
	
	\medskip
	
	For $\vef(t,\cdot)$ and $\eg(t,\cdot)$, for any $0<t\leq 1$ we write 
	\begin{equation*}
	\begin{split}
	\left\| \left( \vef(t,\cdot), \eg(t,\cdot) \right) \right\|^2_{G^{1}_{\frac{2r}{3}\sqrt{t}}}
	=&\, \int_{\Rt} |\xi|^2 e^{2 \left(\frac{2r}{3} \sqrt{t}\right) |\xi|}\, 
	\left( | \widehat{\vef}(t,\xi)|^2+| \widehat{\eg}(t,\xi)|^2 \right)d \xi \\
	=&\, \int_{\Rt}  |\xi|^2 e^{2 \left(\frac{2r}{3}\sqrt{t}\right) |\xi|}\, 
	\left( | \widehat{\vf}(\xi)|^2+| \widehat{g}(\xi)|^2 \right)d \xi \\
	\leq &\,  \int_{\Rt}  |\xi|^2 e^{2 \left|\frac{2r}{3}\xi\right|}\, 
	\left( | \widehat{\vf}(\xi)|^2+| \widehat{g}(\xi)|^2 \right)d \xi\\
	=&\,  \int_{\Rt} |\xi|^{-2} |\xi|^4 e^{2 \left|\frac{2r}{3}\xi\right|}\, 
	\left( | \widehat{\vf}(\xi)|^2+| \widehat{g}(\xi)|^2 \right)d \xi\\
	=&\, \left(\frac{3}{2r}\right)^4 \int_{\Rt} |\xi|^{-2} 
	\left|\frac{2r}{3}\xi\right|^4 e^{2 \left|\frac{2r}{3}\xi\right|}\, 
	\left( | \widehat{\vf}(\xi)|^2+| \widehat{g}(\xi)|^2 \right)d \xi\\
	\leq &\, \left(\frac{3}{2r}\right)^4
	\int_{\Rt} |\xi|^{-1}  e^{3 \left|\frac{2r}{3}\xi\right|}\, 
	\left( | \widehat{\vf}(\xi)|^2+| \widehat{g}(\xi)|^2 \right)d \xi\\
	\leq &\, C^2_r\, 
	\left\| \left( \vf, g\right)\right\|^2_{G^{-1}_r}.
	\end{split}
	\end{equation*}	
	
	For  $\veg(t,\cdot)$, as $0<t \leq 1$ we have $\sqrt{t}\leq \frac{3}{2}$. We thus  write
		\begin{equation*}
	\left\| \veg(t,\cdot) \right\|^2_{G^{\frac{1}{2}}_{\frac{2r}{3}\sqrt{t}}}
	=\, \int_{\Rt} |\xi| e^{2 \left(\frac{2r}{3} \sqrt{t}\right) |\xi|}\,\left| \widehat{\veg}(t,\xi)\right|^2 d \xi \leq \,  \int_{\Rt} |\xi|  e^{2 \left|\frac{2r}{3}\xi \frac{3}{2}\right|}\, \left|\widehat{\vvg}(\xi)\right|^2 d \xi  \leq \| \vvg \|^2_{G^{\frac{1}{2}}_r}. 
	\end{equation*}	
	 
\end{proof}

Since we have $\vef, \eg \in \mathcal{C}\big([0,1],\dot{H}^{1}(\Rt)\big)$ and $\veg \in \mathcal{C}\big([0,1],\dot{H}^{\frac{1}{2}}(\Rt)\big)$, within the framework of Proposition \ref{Prop-Tech:Fujita-Kato} we set the initial data $(\vv_0, \vartheta_0)=(\vu, \theta)$, which yields a time $0<T_0\leq 1$ and a unique solution $(\vv,\vartheta)\in \mathcal{C}\big( [0,T_0], \dot{H}^{1}(\Rt)\big)$ for the evolution Boussinesq system (\ref{Boussinesq-Evolution}).

\medskip

Additionally, since $\vef, \eg$, and $\veg$ satisfy (\ref{Assumption-Data-Gevrey}) with the parameter $\frac{2r}{3}$, Proposition \ref{Prop-Tech:Fujita-Kato-Gevrey} ensures the existence of a time $0<T_1<T_0$ such that 
\[
e^{\frac{2r}{3}\sqrt{-t \Delta}} \left( \vv, \vartheta \right) 
\in \mathcal{C}\big( [0,T_1], \dot{H}^1(\Rt)\big).
\]

\medskip

On the other hand, observe that the stationary solution $(\vu, \theta) \in \dot{H}^1(\Rt)$ satisfies $(\vu, \theta) \in \mathcal{C}([0,T_0], \dot{H}^1(\Rt))$ and also solves the evolution Boussinesq system (\ref{Boussinesq-Evolution}) with initial data $(\vu, \theta)$. Therefore, by uniqueness we have $(\vv, \vartheta)=(\vu,\theta)$, and hence we can write 
\[
e^{\frac{2r}{3}\sqrt{-t \Delta}} \left( \vu, \theta \right) 
\in \mathcal{C}\big( [0,T_1], \dot{H}^1(\Rt)\big).
\]

At this point, recall that the time $T_1$, defined in (\ref{T1}), depends on the quantity $\delta_1$ given in (\ref{delta1}), which ultimately depends on the $\dot{H}^1$-norms of the initial data in (\ref{Boussinesq-Evolution}). In the present case, the quantity $\delta_1$ depends on $\| \vu \|_{\dot{H}^1}$ and $\| \theta \|_{\dot{H}^1}$.

\medskip

In order to make the time $T_1$ independent of the solution $(\vu,\theta)$, we proceed as follows. Assuming the energy estimate (\ref{Energy-Estimate}) and using the first control given in (\ref{Control-Gevrey}), we redefine the quantity $\delta_1$ as 
\[
\delta_1:= C ( e^{r^2}+1)\left( 
\|g\|_{\dot H^{-1}}\,\|\vvg\|_{L^{\frac32}}
+\|\vf\|_{\dot {H^{-1}}}
+\| g \|_{\dot{H}^{-1}} 
+ C_r \left\| \left( \vf, g \right)\right\|_{G^{-1}_r} 
\right).
\]
Similarly, for the quantity $\eta_1$ given in (\ref{eta1}), using the second control in (\ref{Control-Gevrey}), we redefine
\[
\eta_1=  C_r \| \vvg \|_{G^{\frac{1}{2}}_{r}}.
\]
Thereafter, the time $T_1$ given in (\ref{T1}) no longer depends on $(\vu,\theta)$.

\medskip

In this setting, since $(\vu, \theta)$ are time-independent functions, by fixing $t=T_1$ and setting $\rho:= \frac{2r}{3}T_1$, we conclude that 
\[
(\vu, \theta) \in G^1_{\rho}(\Rt).
\]

Finally, for the pressure term $P$ characterized in (\ref{Pressure}), from the pointwise inequality (\ref{Pointwise-Fourier-u}) and the product laws in homogeneous Sobolev spaces, we have 
\[
\| P_{\vu} \|_{G^{\frac{1}{2}}_{\rho}} \leq C\| \vu \|^2_{G^{1}_{\rho}}.
\]
Similarly, from the pointwise inequality (\ref{Pointwise-Fourier-theta}) and the fact that $\rho<r$, we obtain
\[
\| P_{\theta}\|_{G^1_\rho} 
\leq C\, \| \theta \|_{G^1_{\rho}}\, \| \vvg \|_{G^{\frac{1}{2}}_{\rho}} 
\leq C\, \| \theta \|_{G^1_{\rho}}\, \| \vvg \|_{G^{\frac{1}{2}}_{r}}.
\]

Theorem \ref{Th:Gevrey} is now proven.

\subsection{Proof of Corollary \ref{Corollary:Holder}}
\subsubsection{First part} For clarity, we divide the proof into the following steps, which are established in the next technical propositions.

\medskip 

{\bf Step 1}. \emph{Global boundedness of $\vu$ and $\theta$}.  
From Theorem \ref{Th:Gevrey} we have that $(\vu, \theta) \in G^1_{\rho}(\Rt)$. Using this fact, we obtain the following result.

\begin{Proposition}\label{Prop-Tech-L-infty} 
	It holds that $\left(\widehat{\vu}, \widehat{\theta}\right) \in L^1(\Rt)$ and consequently $(\vu, \theta)\in L^\infty(\Rt)$. 
\end{Proposition}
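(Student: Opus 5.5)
We split $\R^3$ in the frequency variable into the unit ball $\{|\xi|\leq 1\}$ and its complement, and use Cauchy--Schwarz on each piece. The low-frequency piece is controlled by the $\dot{H}^1$ norm. The high-frequency piece is controlled by the Gevrey norm from Theorem \ref{Th:Gevrey}, which gives $(\vu,\theta)\in G^1_\rho(\Rt)$. We argue for $\vu$; the argument for $\theta$ is identical.

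\emph{Low frequencies.} On $\{|\xi|\leq 1\}$ we write $|\widehat{\vu}(\xi)| = |\xi|^{-1}\,|\xi||\widehat{\vu}(\xi)|$. Cauchy--Schwarz then gives
\[
\int_{|\xi|\leq 1}|\widehat{\vu}|\,d\xi \leq \Big(\int_{|\xi|\leq 1}|\xi|^{-2}d\xi\Big)^{\frac12}\,\|\vu\|_{\dot{H}^1}.
\]
The first factor is finite because $|\xi|^{-2}$ is integrable near the origin in dimension three: $\int_{|\xi|\le 1}|\xi|^{-2}d\xi = 4\pi$.

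\emph{High frequencies.} On $\{|\xi|>1\}$ we write $|\widehat{\vu}(\xi)| = |\xi|^{-1}e^{-\rho|\xi|}\cdot |\xi|e^{\rho|\xi|}|\widehat{\vu}(\xi)|$. Cauchy--Schwarz gives
\[
\int_{|\xi|> 1}|\widehat{\vu}|\,d\xi \leq \Big(\int_{|\xi|>1}|\xi|^{-2}e^{-2\rho|\xi|}d\xi\Big)^{\frac12}\,\|e^{\rho\sqrt{-\Delta}}\vu\|_{\dot{H}^1}.
\]
The first factor is finite: in polar coordinates it equals $4\pi\int_1^\infty e^{-2\rho s}\,ds = \frac{2\pi}{\rho}e^{-2\rho}$.

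\emph{Conclusion.} Adding the two pieces,
\[
\|\widehat{\vu}\|_{L^1}\leq C\|\vu\|_{\dot{H}^1}+C_\rho\|\vu\|_{G^1_\rho},
\]
and likewise for $\theta$. Since $\widehat{\vu}\in L^1$, the Fourier inversion formula defines $\vu$ as a bounded continuous function, with $\|\vu\|_{L^\infty}\leq (2\pi)^{-3}\|\widehat{\vu}\|_{L^1}$.

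There is one point to check here. An element of $\dot{H}^1$ is determined only modulo constants in some conventions. We take the standard realization: $\widehat{\vu}\in L^1_{loc}$ is a genuine function, which the low-frequency bound above confirms. With that realization, the distribution $\vu$ coincides with the inverse Fourier transform of $\widehat{\vu}$, so no additive constant appears.

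The only real content is the high-frequency integrability, and it is exactly where the analyticity from Theorem \ref{Th:Gevrey} is used: membership in $\dot{H}^1$ alone is not enough, since $|\xi|^{-2}$ is not integrable at infinity. Everything else is routine.
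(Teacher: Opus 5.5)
Your proof is correct and is essentially the paper's argument: the paper applies a single Cauchy--Schwarz inequality on all of $\Rt$ with the weight $|\xi|^{-1}e^{-\rho|\xi|}$, using that $\int_{\Rt}|\xi|^{-2}e^{-2\rho|\xi|}\,d\xi=\frac{2\pi}{\rho}<+\infty$. Your low/high-frequency split is therefore unnecessary, since this weight is already integrable near the origin and $\|\vu\|_{\dot{H}^1}\leq\|\vu\|_{G^1_\rho}$, but it is harmless.
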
	
\begin{proof} 
	Since $\widehat{\vu}$ and $\widehat{\theta}$ satisfy the same hypothesis, it is enough to focus on the second function. Applying the Cauchy-Schwarz inequality, we write
	\begin{equation*}
	\int_{\Rt} | \widehat{\theta}(\xi)| \, d \xi 
	= 
	\int_{\Rt} |\xi|^{-1} e^{- \rho |\xi|}\, |\xi| e^{\rho |\xi|}|\widehat{\theta}(\xi)| \, d \xi  
	\leq 
	\left( \int_{\Rt} |\xi|^{-2} e^{-2\rho |\xi|} \, d \xi \right)^{\frac{1}{2}}
	\left(\int_{\Rt} |\xi|^2 e^{2\rho |\xi|} |\widehat{\theta}(\xi)|^2\, d \xi\right)^{\frac{1}{2}}
	<+\infty. 
	\end{equation*}	
\end{proof}

{\bf Step 2}. \emph{Higher-order derivative estimates}. Since $(\vu, \theta)\in \dot{H}^{1}(\Rt)\subset L^6(\Rt)$ and $(\vu, \theta )\in L^\infty(\Rt)$, we have $(\vu,\theta)\in L^p(\Rt)$ for any $6\leq p <+\infty$. Then we will prove that $(\vu,\theta)\in \dot{W}^{k+2,p}(\Rt)$, which yields the desired result by standard interpolation in homogeneous Sobolev spaces.

\begin{Proposition}\label{Prop:W-k+2-p} For $k\in \mathbb{N}$ assume (\ref{Assumption-W-k-infty}). Then, for any $6\leq p <+\infty$, we have $(\vu, \theta)\in \dot{W}^{k+2,p}(\Rt)$. 
\end{Proposition}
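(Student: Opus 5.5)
We plan an induction on the order of derivatives, using the elliptic regularity of $(-\Delta)^{-1}$ on $L^p$ (via Calderón--Zygmund bounds for the Riesz transforms and the Leray projector $\P$, which are bounded on $L^p$ for $1<p<\infty$). We rewrite the stationary system (\ref{Boussinesq}) as
\[
\vu = (-\Delta)^{-1}\P\big(-\text{div}(\vu\otimes\vu)+\theta\vvg+\vf\big), \qquad \theta=(-\Delta)^{-1}\big(-\text{div}(\theta\vu)+g\big),
\]
so that, for $m\geq 0$,
\[
\|(-\Delta)^{\frac{m+2}{2}}\vu\|_{L^p}\leq C\|(-\Delta)^{\frac{m+1}{2}}(\vu\otimes\vu)\|_{L^p}+C\|(-\Delta)^{\frac{m}{2}}(\theta\vvg)\|_{L^p}+C\|(-\Delta)^{\frac m2}\vf\|_{L^p},
\]
and analogously for $\theta$. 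The starting information is $(\vu,\theta)\in L^p\cap L^\infty$ for $6\leq p<\infty$ (Step 1, Proposition \ref{Prop-Tech-L-infty}), together with the Gevrey regularity of Theorem \ref{Th:Gevrey}, which guarantees that all derivatives are well defined and belong to $L^2$. This lets us manipulate the equations classically and avoid circularity when bounding derivative norms.

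The inductive step uses the Kato--Ponce inequality (Lemma \ref{Leibniz-rule}) with $p_1=q_2=p$, $p_2=q_1=\infty$:
\[
\|(-\Delta)^{\frac s2}(\vu\otimes\vu)\|_{L^p}\leq C\|(-\Delta)^{\frac s2}\vu\|_{L^p}\|\vu\|_{L^\infty},
\]
and similarly for $\theta\vu$. For the buoyancy term we use
\[
\|(-\Delta)^{\frac s2}(\theta\vvg)\|_{L^p}\leq C\|(-\Delta)^{\frac s2}\theta\|_{L^p}\|\vvg\|_{L^\infty}+C\|\theta\|_{L^p}\|(-\Delta)^{\frac s2}\vvg\|_{L^\infty}.
\]
The quantities $\|\vvg\|_{L^\infty}$ and $\|(-\Delta)^{s/2}\vvg\|_{L^\infty}$ for $0\leq s\leq k$ are controlled from $\vvg\in\dot W^{-1,\infty}\cap\dot W^{k,\infty}$ by interpolation. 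The forcing terms $(-\Delta)^{m/2}\vf$ and $(-\Delta)^{m/2}g$ must be in $L^p$. For these we combine $\dot W^{-1,\infty}\cap\dot W^{k,\infty}$ with the Gevrey assumption $\vf,g\in G^{-1}_r$, interpolating $L^\infty$-bounds with $L^2$-based bounds to land in $L^p$.

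Each step of the bootstrap gains one derivative. We first obtain $\vu,\theta\in\dot W^{1,p}$ from the $L^p\cap L^\infty$ bounds, by writing $(-\Delta)^{1/2}(-\Delta)^{-1}\text{div}$ as a zero-order operator acting on $\vu\otimes\vu\in L^p$. Then $\dot W^{2,p}$ follows with $m=0$. Iterating up to $m=k$ gives $\dot W^{k+2,p}$: the forcing contributes up to order $k$, which is exactly the order at which the data regularity runs out.

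The main obstacle will be making the forcing terms $(-\Delta)^{m/2}\vf$ and $(-\Delta)^{m/2}(\theta\vvg)$ genuinely belong to $L^p$ rather than only to $L^\infty$. The assumption (\ref{Assumption-W-k-infty}) is purely $L^\infty$-based, and the low frequencies are delicate in homogeneous spaces. We will try first to split into low and high frequencies. At low frequencies we use the Gevrey/$\dot H^{-1}$ information together with Bernstein inequalities to get $L^p$ with $p\geq 6>2$. At high frequencies we interpolate between the $L^\infty$-bound and the $L^2$-type bound coming from $G^{-1}_r$. For the product term $\theta\vvg$, the factor $\theta\in L^p$ already absorbs the integrability, so only $L^\infty$ control of $\vvg$ is needed there.
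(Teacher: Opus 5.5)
Your bootstrap is essentially the paper's proof: the fixed-point form of the system, Riesz/Leray bounds on $L^p$, product estimates using $(\vu,\theta)\in L^\infty$, and forcing terms handled by combining the $L^2$ information from $G^{-1}_r$ with the $L^\infty$ hypothesis. The paper uses integer derivatives $\partial^\alpha$, the classical Leibniz rule and H\"older instead of Kato--Ponce, which makes no difference.

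There is one step that does not go through as you wrote it: the first one, $\vu\in\dot{W}^{1,p}$. Besides the term $(-\Delta)^{-\frac12}\mathbb{P}\,\text{div}(\vu\otimes\vu)$, you also need $(-\Delta)^{-\frac12}\mathbb{P}(\theta\vvg)\in L^p$, and your proposal does not treat it. The mechanism you give for the buoyancy term (``$\theta\in L^p$ absorbs the integrability, so only $L^\infty$ control of $\vvg$ is needed'') fails at this negative order. The operator $(-\Delta)^{-\frac12}$ is not bounded on $L^p$. With only $\vvg\in L^\infty$ and $\theta\in L^6\cap L^\infty$, you get $\theta\vvg\in L^q$ only for $q\geq 6$, whereas Hardy--Littlewood--Sobolev needs $q<3$. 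The missing ingredient is integrability of $\vvg$. Since $\vvg\in G^{\frac12}_r\subset\dot{H}^{\frac12}\subset L^3$, Hardy--Littlewood--Sobolev and H\"older give $\|(-\Delta)^{-\frac12}\mathbb{P}(\theta\vvg)\|_{L^p}\leq C\|\theta\vvg\|_{L^{\frac{3p}{p+3}}}\leq C\|\theta\|_{L^p}\|\vvg\|_{L^3}$. This is exactly how the paper closes this step.

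Alternatively, you can skip this step, and in fact the whole bootstrap. You already noted that the Gevrey bound puts all derivatives of order at least one in $L^2$. The Cauchy--Schwarz argument of Proposition \ref{Prop-Tech-L-infty} also puts them in $L^\infty$: for every $m\geq 1$,
\[
\left\||\xi|^m\widehat{\vu}\right\|_{L^1}\leq\left\||\xi|^{m-1}e^{-\rho|\xi|}\right\|_{L^2}\|\vu\|_{G^1_\rho}<+\infty .
\]
Hence $(-\Delta)^{\frac m2}\vu\in L^2\cap L^\infty\subset L^p$ for all $2\leq p\leq+\infty$, and the same holds for $\theta$. So the statement follows directly from Theorem \ref{Th:Gevrey}, and hypothesis (\ref{Assumption-W-k-infty}) is not actually needed for it.
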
 
\begin{proof} 
	Observe that $(\vu, \theta)$ can be rewritten as the fixed-point equations:
	\begin{equation}\label{Fixed-Point-System}
	\begin{cases}\vspace{2mm}
	\theta= -(-\Delta)^{-1} \text{div}(\theta\, \vu)+ (-\Delta)^{-1}(g), \\
	\vu= - (-\Delta)^{-1} \mathbb P \text{div}(\vu \otimes \vu)  +(-\Delta)^{-1}(\vf)+(-\Delta)^{-1} \mathbb P\big(\theta \, \vec{{\bf g}}\big).
	\end{cases} 
	\end{equation}
	
	We will use these equations to prove that, for any multi-index $|\alpha| \leq k+2$, we have $(\partial^\alpha\vu,  \partial^\alpha\theta)\in L^p(\Rt)$. To this end, in the following technical lemmas we perform an iteration process with respect to $|\alpha|$. 

\begin{Lemme}[The initial case for $k=0$]\label{Lem-Tech-Reg-Initial} 
Assuming (\ref{Assumption-W-k-infty}) and since $(\vu, \theta)\in \dot{H}^1(\Rt)\cap L^\infty(\Rt)$, for any $|\alpha|\leq 2$ and $6\leq p <+\infty$, we have $(\partial^\alpha\vu, \partial^\alpha \theta) \in L^p(\Rt)$.
\end{Lemme}	 
\begin{proof}  For clarity, we consider the cases $|\alpha|=1$ and $|\alpha|=2$ separately. 
	\begin{itemize}
		\item Let $|\alpha|=1$. From the first equation in (\ref{Fixed-Point-System}), we write
		\begin{equation*}
		\partial^\alpha \theta= -(-\Delta)^{-1} \partial^\alpha \text{div}(\theta\, \vu)+ (-\Delta)^{-1} \partial^\alpha(g), 
		\end{equation*}
		where we must verify that each term on the right-hand side belongs to $L^p(\Rt)$. 
		
		\medskip
		
	For the first term, since $(\vu, \theta )\in L^p(\Rt) \cap L^\infty(\Rt)$ we have $\theta \vu \in L^p(\Rt)$, and since the operator $ -(-\Delta)^{-1} \partial^\alpha \text{div}(\cdot)$ can be written as a linear combination of the Riesz transforms $\mathcal{R}_i \mathcal{R}_j$ (where $\mathcal{R}_i:= (-\Delta)^{-\frac{1}{2}}\partial_i$), we obtain that $-(-\Delta)^{-1} \partial^\alpha \text{div}(\theta\, \vu) \in L^p(\Rt)$.
		
		\medskip
		
For the second term, since $g\in G^{-1}_r(\Rt)$ also satisfies (\ref{Assumption-W-k-infty}), we have 
\[
g\in G^{-1}_{r}(\Rt)\cap \dot{W}^{-1,\infty}(\Rt)\subset \dot{H}^{-1}(\Rt)\cap \dot{W}^{-1,\infty}(\Rt)\subset \dot{W}^{-1,p}(\Rt),
\]
which implies that $(-\Delta)^{-1} \partial^\alpha(g) \in L^p(\Rt)$. Collecting these facts, we obtain that $\theta \in \dot{W}^{1,p}(\Rt)$.

\medskip
		
	Similarly, from the second equation in (\ref{Fixed-Point-System}) we write
	\begin{equation*}
	\partial^\alpha \vu= - (-\Delta)^{-1} \mathbb P 	\partial^\alpha \text{div}(\vu \otimes \vu)  +(-\Delta)^{-1}	\partial^\alpha(\vf)+(-\Delta)^{-1} \mathbb{P}  	\partial^\alpha\big(\theta \, \vec{{\bf g}}\big).
	\end{equation*}
	The first and the second terms follow from the same arguments as above. For the third term, observe first that we have $\vvg \in G^{\frac{1}{2}}_r (\Rt)\subset \dot{H}^{\frac{1}{2}}(\Rt)\subset L^3(\Rt)$. Then, by well-known properties of the Riesz transform, the Hardy--Littlewood--Sobolev inequalities and H\"older's inequality, and since $\theta \in L^p(\Rt)$ for any $6\leq p <+\infty$, we obtain
	\begin{equation*}
	\| (-\Delta)^{-1}\P \partial^\alpha (\theta\, \vvg)\|_{L^p} \leq C \| (-\Delta)^{-\frac{1}{2}} (\theta\, \vvg)\|_{L^p} \leq C \| \theta\, \vvg \|_{L^{\frac{3p}{3+p}}} \leq C \| \theta \|_{L^p}\, \| \vvg \|_{L^3}. 
	\end{equation*}
	We thus obtain that $\vu \in \dot{W}^{1,p}(\Rt)$.
	
	\medskip
		
	\item Let $|\alpha|=2$. We split $\alpha=\alpha_1 + \alpha_2$, where $|\alpha_1|=|\alpha_2|=1$. Then, since $(\vu, \theta)\in \dot{W}^{1,p}(\Rt)\cap L^\infty(\Rt)$, we can write
	\begin{equation*} 
	\begin{split}
	\| (-\Delta)^{-1} \partial^\alpha \text{div}(\theta\, \vu)\|_{L^p}=&\,  	\| (-\Delta)^{-1} \partial^{\alpha_1} \text{div}\,  \partial^{\alpha_2}(\theta\, \vu)\|_{L^p} \leq C\, \| \partial^{\alpha_2}(\theta\, \vu) \|_{L^p}\\
	\leq&\,  C\,\left( \| \vu \|_{\dot{W}^{1,p}}\| \theta \|_{L^\infty}+ \| \vu\|_{L^\infty}\, \| \theta \|_{\dot{W}^{1,p}}\right).
	\end{split} 
	\end{equation*}
	Similarly, we have 
	\begin{equation*}
	\| (-\Delta)^{-1} \P \partial^\alpha \text{div}(\vu\otimes \vu)\|_{L^p}\leq C \| \vu \|_{\dot{W}^{1,p}}\, \| \vu \|_{L^\infty}. 
	\end{equation*}
	
	For the remaining terms in the system (\ref{Fixed-Point-System}), observe that the operators $ (-\Delta)^{-1}\P \partial^\alpha (\cdot)$ and $ (-\Delta)^{-1} \partial^\alpha (\cdot)$ can be written as linear combinations of Riesz transforms. Consequently, the desired result follows from the already known properties of $\theta, \vf, g$ and $\vvg$.
	\end{itemize}	
\end{proof}

\begin{Lemme}[The iterative process for $k\geq 1$]\label{Reg-Tech-Reg-Iteration}  
	Assume (\ref{Assumption-W-k-infty}). In addition, for $1\leq m \leq k$ and for any $|\alpha|\leq m$, assume that $(\partial^\alpha \vu, \partial^\alpha \theta)\in L^p(\Rt)$ for any $6\leq p <+\infty$. Then the result also holds for $|\alpha|\leq k+2$. 
\end{Lemme}	
\begin{proof} The proof follows very similar arguments to those used in the proof of the previous lemma. For the reader's convenience, we only sketch the main estimates for $|\alpha|\leq k+1$. 

\medskip

	In the system (\ref{Fixed-Point-System}), we first consider the bilinear terms involving $\vu$ and $\theta$, then the linear term involving $\theta$ and $\vvg$, and finally the data terms involving $\vf$ and $g$.
	
\medskip	 
	
By splitting $\alpha=\alpha_1+\alpha_2$, where $|\alpha_1|=1$ and $|\alpha_2|=k$, and using the Leibniz rule, we write
\begin{equation*}
\begin{split}
\left\| (-\Delta)^{-1} \partial^\alpha \, \text{div}(\theta\, \vu) \right\|_{L^p} = &\,  \left\|  (-\Delta)^{-1} \partial^{\alpha_1} \, \text{div} \, \partial^{\alpha_2}(\theta\, \vu)\right\|_{L^p}  \leq C\,\| \partial^{\alpha_2}(\theta\, \vu)\|_{L^p}\\
\leq &\, C\, \sum_{|\beta|\leq k} C_{\alpha_2,\beta} \left\| \partial^\beta \theta\, \partial^{\alpha_2-\beta}\vu \right\|_{L^p},
\end{split}
\end{equation*}
where $C_{\alpha_2,\beta}>0$ is a numerical constant depending on the multi-indices $\alpha_2$ and $\beta$. From our assumption on $\vu$ and $\theta$, we have $\partial^\beta \theta \in L^{2p}(\Rt)$ and $\partial^{\alpha_2-\beta}\vu \in L^{2p}(\Rt)$ (just write $2p$ instead of $p$). Therefore, by H\"older's inequality, we obtain 
\begin{equation*}
C\, \sum_{|\beta|\leq k} C_{\alpha_2,\beta} \left\| \partial^\beta \theta\, \partial^{\alpha_2-\beta}\vu \right\|_{L^p} \leq C\, \sum_{|\beta|\leq k} C_{\alpha_2,\beta} \left\| \partial^\beta \theta \|_{L^{2p}}\, \| \partial^{\alpha_2-\beta}\vu \right\|_{L^{2p}}<+\infty.
\end{equation*}
As before, the term $(-\Delta)^{-1} \mathbb P \text{div}(\vu \otimes \vu)$ follows the same estimates. 

\medskip 

For the term $(-\Delta)^{-1} \mathbb{P}  	\partial^\alpha\big(\theta \, \vec{{\bf g}}\big)$, we split $\alpha=\alpha_1+\alpha_2$, where $|\alpha_1|=2$ and $|\alpha_2|=k-1$. Then, using the same arguments as above we write
\begin{equation*}
\left\| (-\Delta)^{-1} \mathbb{P}  	\partial^\alpha\big(\theta \, \vec{{\bf g}}\big)\right\|_{L^P} \leq C\, \| \partial^{\alpha_2}(\theta\, \vvg)\|_{L^p} \leq C\, \sum_{|\beta|\leq k-1} C_{\alpha_2,\beta}\| \partial^\beta \theta\|_{L^{2p}}\, \| \partial^{\alpha_2-\beta} \vvg \|_{L^{2p}}.
\end{equation*}
Here, the term $ \| \partial^{\alpha_2-\beta} \vvg \|_{L^{2p}}$ is well-controlled since we can  verify that 
\begin{equation}\label{Reg-Gravitational-Acceletation}
\vvg \in \dot{W}^{m,p}(\Rt), \qquad \text{for any $1\leq m\leq k$ and $6\leq p <+\infty$}.
\end{equation}
Indeed, since $\vvg \in G^{\frac{1}{2}}_r(\Rt)$, we first obtain that $\vvg \in \dot{W}^{m,2}(\Rt)$ by writing
\begin{equation*}
\begin{split}
\int_{\Rt} |\xi|^{2m} |\widehat{\vvg}(\xi)|^2 d \xi = &\,  \int_{\Rt} |\xi|^{2m-1} e^{-2 r |\xi|}\, |\xi|e^{2r|\xi|} |\widehat{\vvg}(\xi)|^2 d \xi \\
\leq &\, \left(  \sup_{\xi \in \Rt} |\xi|^{2m-1} e^{-2 r |\xi|}\right) \, \int_{\Rt}|\xi|e^{2r|\xi|} |\widehat{\vvg}(\xi)|^2 d\xi <+\infty.
\end{split} 
\end{equation*}
Thereafter, from assumption (\ref{Assumption-W-k-infty}) we also have that $\vvg \in \dot{W}^{m,\infty}(\Rt)$, which yields $\vvg \in \dot{W}^{m,p}(\Rt)$ for any $6\leq p <+\infty$. 

\medskip

For the term $(-\Delta)^{-1}\partial^{\alpha}(\vf, g)$, recall that we have $(\vf, g)\in G^{-1}_r(\Rt)$ and, by assumption (\ref{Assumption-W-k-infty}), we also have $(\vf, g)\in \dot{W}^{k-1,\infty}(\Rt)$. Therefore, following the arguments above, we obtain $(\vf, g)\in \dot{W}^{k-1,p}(\Rt)$. Thereafter, we split $\alpha= \alpha_1+\alpha_2$, where $|\alpha_1|=2$ and $|\alpha_2|=k-1$, to get
\begin{equation*}
\| (-\Delta)^{-1}\partial^{\alpha}(\vf, g) \|_{L^p} \leq C\,  \|  \partial^{\alpha_2}(\vf, g) \|_{L^p} \leq C\| (\vf, g) \|_{\dot{W}^{k-1,p}}. 
\end{equation*}

We thus obtain that $(\vu, \theta)\in \dot{W}^{k+1,p}(\Rt)$ and, by repeating this process, we reach the desired conclusion $(\vu, \theta)\in \dot{W}^{k+2,p}(\Rt)$.
\end{proof}	
From Lemmas \ref{Lem-Tech-Reg-Initial} and \ref{Reg-Tech-Reg-Iteration}, we conclude the proof of Proposition \ref{Prop:W-k+2-p}.  \end{proof}

To complete the proof of the first part of Corollary \ref{Corollary:Holder}, we study the regularity of the pressure $P$.		

\begin{Lemme} Let $P$ be the pressure term characterized in (\ref{Pressure}) through $\vu, \theta$ and $\vvg$. Since $(\vu, \theta)\in L^p(\Rt) \cap L^\infty(\Rt) \cap \dot{W}^{k+2,p}(\Rt)$ and $\vvg$ satisfies (\ref{Assumption-W-k-infty}), it holds that $P\in L^p(\Rt)\cap \dot{W}^{k+2,p}(\Rt)+L^p(\Rt)\cap \dot{W}^{k-1,p}(\Rt)$.
\end{Lemme}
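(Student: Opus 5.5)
We treat the two pieces of the pressure decomposition (\ref{Pressure}) separately, showing $P_{\vu}\in L^p(\Rt)\cap\dot{W}^{k+2,p}(\Rt)$ and $P_\theta\in L^p(\Rt)\cap \dot{W}^{k-1,p}(\Rt)$ for every $6\leq p<+\infty$. The tool in both cases is that the symbols of $(-\Delta)^{-1}\text{div}\,\text{div}$ and of $(-\Delta)^{-1}\partial^{\alpha_1}\text{div}$ with $|\alpha_1|=1$ are homogeneous of degree $0$. These operators are therefore finite combinations of products $\mathcal{R}_i\mathcal{R}_j$ (or $\mathcal{R}_i\mathcal{R}_j\mathcal{R}_l$) of Riesz transforms, and are bounded on $L^p$ for $1<p<+\infty$.

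For $P_{\vu}=(-\Delta)^{-1}\text{div}(\text{div}(\vu\otimes\vu))=\sum_{i,j}\mathcal{R}_i\mathcal{R}_j(u_iu_j)$ we argue as follows. Since $\vu\in L^p\cap L^\infty$, we have $u_iu_j\in L^p$, and hence $P_{\vu}\in L^p$. For a multi-index $|\alpha|\leq k+2$, the Leibniz rule gives
\[\partial^\alpha(u_iu_j)=\sum_{\beta\leq\alpha}C_{\alpha,\beta}\,\partial^\beta u_i\,\partial^{\alpha-\beta}u_j .\]
Proposition \ref{Prop:W-k+2-p} applies with $2p$ in place of $p$, and interpolation with $L^{2p}$ gives $\partial^\beta u_i,\partial^{\alpha-\beta}u_j\in L^{2p}$. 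By H\"older's inequality each product lies in $L^p$, and the boundedness of the Riesz transforms then gives $\partial^\alpha P_{\vu}\in L^p$, that is, $P_{\vu}\in\dot{W}^{k+2,p}$. For the terms where one factor carries no derivative, we may alternatively use the $L^\infty$ bound on that factor directly.

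For $P_\theta=-(-\Delta)^{-1}\text{div}(\theta\vvg)$ we first get the $L^p$ bound. By Hardy--Littlewood--Sobolev and H\"older, exactly as in the proof of Lemma \ref{Lem-Tech-Reg-Initial},
\[\|(-\Delta)^{-1}\text{div}(\theta\vvg)\|_{L^p}\leq C\|(-\Delta)^{-\frac12}(\theta\vvg)\|_{L^p}\leq C\|\theta\vvg\|_{L^{\frac{3p}{3+p}}}\leq C\|\theta\|_{L^p}\|\vvg\|_{L^3},\]
where $\vvg\in G^{\frac12}_r\subset\dot{H}^{\frac12}\subset L^3$. For derivatives with $1\leq|\alpha|\leq k-1$ (only relevant when $k\geq2$), write $\alpha=\alpha_1+\alpha_2$ with $|\alpha_1|=1$. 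Then $(-\Delta)^{-1}\partial^{\alpha_1}\text{div}$ is a Riesz combination, so
\[\|\partial^\alpha P_\theta\|_{L^p}\leq C\|\partial^{\alpha_2}(\theta\vvg)\|_{L^p}\leq C\sum_{\beta\leq\alpha_2}\|\partial^\beta\theta\|_{L^{2p}}\,\|\partial^{\alpha_2-\beta}\vvg\|_{L^{2p}} .\]
The factors $\partial^\beta\theta$ are controlled by Proposition \ref{Prop:W-k+2-p}. The factors $\partial^{\alpha_2-\beta}\vvg$ have order at most $k-2$ and are controlled by (\ref{Reg-Gravitational-Acceletation}) for positive orders. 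For order zero we use $\vvg\in L^3\cap L^\infty$; boundedness of $\vvg$ follows, for instance, from $\widehat{\vvg}\in L^1$, which is obtained as in Proposition \ref{Prop-Tech-L-infty} via Cauchy--Schwarz against $|\xi|^{-1/2}e^{-r|\xi|}$. Alternatively, for the term with $\beta=\alpha_2$ one may instead place $\theta$-derivatives in $L^p$ and keep $\vvg$ in $L^\infty$.

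The main obstacle is the bookkeeping for $P_\theta$. The regularity of $P_\theta$ is limited by the $\dot{W}^{m,p}$ regularity of $\vvg$ available from (\ref{Reg-Gravitational-Acceletation}), and the index $k-1$ in the statement is what is safely reachable. In fact one derivative is gained through $(-\Delta)^{-1}\text{div}$, so $k$ or even $k+1$ would probably be attainable, but I will prove only the stated $k-1$. A further point to check is the low-integrability end: we need $\vvg\in L^{2p}$ with $2p\geq12$, which requires the $L^\infty$ bound on $\vvg$ together with $L^3$. Once the two memberships are established, interpolation in homogeneous Sobolev spaces gives the intermediate indices used in Corollary \ref{Corollary:Holder}.
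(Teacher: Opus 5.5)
Your argument is correct and follows the paper's overall route. You split $P=P_{\vu}+P_\theta$, use $L^p$-boundedness of Riesz transforms, estimate the products, and control $\vvg$ by $\vvg\in L^\infty$ (Cauchy--Schwarz as in Proposition~\ref{Prop-Tech-L-infty}) together with (\ref{Reg-Gravitational-Acceletation}). Where the paper applies the Kato--Ponce inequality with an $L^p\times L^\infty$ split, you use the integer Leibniz rule with $L^{2p}\times L^{2p}$ H\"older. For integer orders this is an equivalent choice, since the hypotheses on $(\vu,\theta)$ hold for every $6\leq p<+\infty$.

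The real difference is in $P_\theta$. The paper proves $\theta\vvg\in L^p\cap\dot{W}^{k,p}$ and uses the derivative gained by $(-\Delta)^{-1}\text{div}$ to get $P_\theta\in\dot{W}^{1,p}\cap\dot{W}^{k+1,p}$. It never shows $P_\theta\in L^p$, although the statement asserts it, and your Hardy--Littlewood--Sobolev step supplies exactly that missing bound.

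Conversely, stopping at order $k-1$ proves the lemma as written, but your last sentence overreaches. The class $L^p\cap\dot{W}^{k-1,p}$ does not produce the index $\min(s+1,k+1)$ appearing in Corollary~\ref{Corollary:Holder}. If you carry out the derivative gain you mention yourself and combine it with your $L^p$ bound, you get $P_\theta\in L^p\cap\dot{W}^{k+1,p}$. That serves both the lemma (for $k\geq 1$) and the corollary.

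If $k=0$ is allowed, the stated $\dot{W}^{-1,p}$ is reached by neither your argument nor the paper's. It follows from Hardy--Littlewood--Sobolev for $(-\Delta)^{-1}$, because $\theta\vvg\in L^{\frac65}\cap L^\infty$ by the standing assumption $\vvg\in L^{\frac32}(\Rt)$.
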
	

\begin{proof} 
From (\ref{Pressure}) we have
\[
P=  (-\Delta)^{-1}\text{div} \left( \text{div}(\vu \otimes \vu)\right) - (-\Delta)^{-1} \text{div}(\theta \, \vvg):= P_{\vu}+P_{\theta}.
\]

For the first term on the right-hand side, since $\vu \in L^p(\Rt) \cap L^\infty(\Rt) \cap \dot{W}^{k+2,p}(\Rt)$, applying H\"older inequalities and Lemma \ref{Leibniz-rule} (with $p_1=q_2=p$, $p_2=q_1=\infty$ and $s=k+2$) we obtain that $P_{\vu}\in L^p(\Rt)\cap  \dot{W}^{k+2,p}(\Rt)$. 

\medskip

For the second term, on the one hand recall that $\theta \in L^p(\Rt)\cap L^\infty(\Rt)\cap \dot{W}^{k+2,p}(\Rt)$. On the other hand, since $\vvg \in G^{\frac{1}{2}}_r(\Rt)$, following the same arguments as in the proof of Proposition \ref{Prop-Tech-L-infty}, we have $\vvg \in L^\infty(\Rt)$. Additionally, by (\ref{Reg-Gravitational-Acceletation}) we have $\vvg \in \dot{W}^{k,p}(\Rt)$. Thereafter, using again H\"older inequalities and Lemma \ref{Leibniz-rule}, we obtain $\theta \, \vvg \in L^p(\Rt)\cap \dot{W}^{k,p}(\Rt)$, which yields $P_\theta \in \dot{W}^{1,p}(\Rt) \cap \dot{W}^{k+1,p}(\Rt)$.
\end{proof}

The first part of Corollary \ref{Corollary:Holder} is thus proven.

\subsubsection{Second part} H\"older regularity of weak $\dot{H}^1$-solutions is now a direct consequence of the first part of Corollary \ref{Corollary:Holder} and Lemma \ref{Lem-Holder-Reg}. 

\medskip

Recall that the Lebesgue space $L^p(\Rt)$ continuously embeds into the Morrey space $\dot{M}^{1,p}(\Rt)$ defined in (\ref{Morrey}). Therefore, for any $0\leq s \leq k+1$ and $6\leq p <+\infty$, defining $\sigma:= 1-\frac{3}{p}$, we write
\begin{equation*}
\left| (-\Delta)^{\frac{s}{2}}\big( \theta(x)-\theta(y)\big)\right| \leq \,  C\, \| \vec{\nabla} (-\Delta)^{\frac{s}{2}} \theta \|_{\dot{M}^{1,p}}\, |x-y|^{1-\frac{3}{p}} \leq C\, \left\| (-\Delta)^{\frac{s+1}{2}}\theta \right\|_{L^p}\, |x-y|^{\sigma}.
\end{equation*}

Similarly, for the velocity $\vu$ and the pressure $P$ we obtain that $\vu\in \mathcal{C}^{s,\sigma}(\Rt)$ and $P\in \mathcal{C}^{s,\sigma}(\Rt)+\mathcal{C}^{\min(s,k),\sigma}(\Rt)$, respectively. The second part of Corollary \ref{Corollary:Holder} is now proven.

\section{Analyticity of weak $\dot{H}^1$-solutions in the homogeneous case: proof of Theorem \ref{Th:Gevrey-Homogeneous}}\label{Sec:Homogeneous} 

The proof essentially follows the same steps as in the proof of Theorem \ref{Th:Gevrey}. Consequently, we only detail the main points.

\medskip

First, we state a unified analogous version of Propositions \ref{Prop-Tech:Fujita-Kato} and \ref{Prop-Tech:Fujita-Kato-Gevrey} for the evolution homogeneous Boussinesq system:
\begin{equation}\label{Boussinesq-Evolution-Homogeneous} 
\begin{cases}\vspace{2mm}
\partial_t \vv - \Delta \vv + \P\, \text{div}(\vv \otimes \vv) = \P (\vartheta\, \veg),  \quad \text{div}(\vv)=0, \\ \vspace{2mm}
\partial_t \vartheta - \Delta\vartheta + \text{div}(\vartheta\, \vv)= 0, \\
\vv(0,\cdot)=\vv_0, \quad \vartheta(0,\cdot)=\vartheta_0. 
\end{cases}
\end{equation}

\begin{Proposition}   
	Let $\vv_0, \vartheta_0 \in \dot{H}^1(\Rt)$ with $\text{div}(\vv)=0$, and let $\veg \in \mathcal{C}\big([0,1], \dot{H}^{\frac{1}{2}}(\Rt)\big)$. Then the following statements hold:
	\begin{enumerate}
		\item Define the quantities
		\[
		\delta_0:= C\left( \| \vv_0\|_{\dot{H}^1}+\| \vartheta_0 \|_{\dot{H}^1} \right), 
		\quad 
		\eta_0:= C \| \veg \|_{L^\infty_t \dot{H}^{\frac{1}{2}}_x}, 
		\quad 
		\text{and}
		\quad 
		T_0:= \frac{1}{2}\min\left( 1, \frac{1}{(9\delta_0)^4}, \frac{1}{(3\eta_0)^2} \right).
		\]
		Then the system (\ref{Boussinesq-Evolution-Homogeneous}) has a unique solution 
		$(\vv, \vartheta)\in \mathcal{C}\big( [0,T_0], \dot{H}^1(\Rt)\big)$.
		
		\medskip
		
		\item Let $r>0$, and assume in addition that $\veg \in G^{\frac{1}{2}}_r(\Rt)$. Define
		\[
		\delta_1:= C(e^{r^2}+1)\left( \| \vv_0\|_{\dot{H}^1}+\| \vartheta_0 \|_{\dot{H}^1} \right), 
		\quad 
		\eta_1:= C \| e^{r\sqrt{-t\Delta}}\veg \|_{L^\infty_t \dot{H}^{\frac{1}{2}}_x},
		\]
		\[
		T_1:= \frac{1}{2}\min\left( T_0, \frac{1}{(9\delta_1)^4}, \frac{1}{(3\eta_1)^2} \right).
		\]
		Then the solution obtained in the first part satisfies
		\[
		e^{r \sqrt{-t \Delta}} \big(\vv, \vartheta\big) 
		\in \mathcal{C}\big(]0,T_1], \dot{H}^1 (\Rt)\big).
		\]
	\end{enumerate}
\end{Proposition}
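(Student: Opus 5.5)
The plan is to deduce this proposition from Propositions \ref{Prop-Tech:Fujita-Kato} and \ref{Prop-Tech:Fujita-Kato-Gevrey} by specializing to $\vef=0$ and $\eg=0$. Most of the work reduces to checking that the earlier arguments still apply once the forcing terms vanish.

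\emph{First part.} Set $\vef\equiv 0$ and $\eg\equiv 0$, which trivially belong to $\mathcal{C}([0,1],\dot{H}^1(\Rt))$. The mild formulation (\ref{Boussinesq-Evolution-Mild}) then reduces to that of (\ref{Boussinesq-Evolution-Homogeneous}), and the free term becomes
\[ e_0=\left(e^{t\Delta}\vv_0,\; e^{t\Delta}\vartheta_0\right). \]
Its $E_T$-norm is bounded by $C(\|\vv_0\|_{\dot H^1}+\|\vartheta_0\|_{\dot H^1})=\delta_0$ because the heat semigroup is a contraction on $\dot H^1$. Lemmas \ref{Lem-Tech-Bilinear-FK} and \ref{Lem-Tech-Lin-FK} are unchanged and give $C_B=CT^{1/4}$ and $C_L=CT^{1/2}\|\veg\|_{L^\infty_t\dot H^{1/2}_x}$. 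With $T=T_0$ one has $9\delta_0C_B\le 9\delta_0 T_0^{1/4}<1$ and $C_L\le \eta_0T_0^{1/2}<1/3$. The remaining condition $C_L+6\delta_0C_B<1$ in (\ref{Conditions-Picard}) follows from these two, since $1/3+2/3=1$; the factor $\tfrac12$ in $T_0$ supplies the strict inequality. Theorem \ref{Th:Picard} then gives existence, and uniqueness in $\mathcal{C}([0,T_0],\dot H^1)$ follows from the same argument as in \cite[Proposition 7.1]{PGLR1}.

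\emph{Second part.} A time-independent $\veg\in G^{\frac12}_r$ satisfies $\|e^{r\sqrt{-t\Delta}}\veg\|_{\dot H^{1/2}}\le\|\veg\|_{G^{1/2}_r}$ for $t\le1$, so assumption (\ref{Assumption-Data-Gevrey}) holds and $\eta_1$ is finite. We work in $F_T$. Lemma \ref{Lem-Tech-Data-FK-Gevrey}, applied with zero forces, gives $\|e_0\|_{F_T}\le (e^{r^2}+1)(\|\vv_0\|_{\dot H^1}+\|\vartheta_0\|_{\dot H^1})\le\delta_1$. Lemmas \ref{Lem-Tech-Bilinear-FK-Gevrey} and \ref{Lem-Tech-Linear-FK-Gegrey} supply the bilinear and linear bounds. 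The key ingredient in both is the subadditivity estimate $e^{r\sqrt t|\xi|}\le e^{r\sqrt t|\xi-\eta|}e^{r\sqrt t|\eta|}$. The Gevrey weight is evaluated at the outer time $t$, while the integrand lives at time $\tau\le t$; to handle this I would use the bound $e^{r\sqrt{t}|\xi|}\le e^{r\sqrt{t-\tau}|\xi|}e^{r\sqrt{\tau}|\xi|}$ and absorb $e^{r\sqrt{t-\tau}|\xi|}$ into the heat factor $e^{-(t-\tau)|\xi|^2}$ at a cost of $e^{r^2}$. This is the same device as in Lemma \ref{Lem-Tech-Data-FK-Gevrey}, and it explains the factor $e^{r^2}$ that appears in $\delta_1$.

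Choosing $T=T_1$, the conditions (\ref{Conditions-Picard}) hold by the same arithmetic as in the first part, and Picard's theorem yields a solution $\tilde e\in F_{T_1}$. Because $T_1\le T_0$ and $F_{T_1}\subset\mathcal{C}([0,T_1],\dot H^1)$, the uniqueness from the first part shows that $\tilde e$ coincides with $(\vv,\vartheta)$ on $[0,T_1]$, which gives the Gevrey conclusion. The main obstacle is the time-weight bookkeeping described above inside the Duhamel integrals. The rest is a direct transcription of the earlier proofs.
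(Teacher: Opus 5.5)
Your route is the paper's own: this proposition is Propositions \ref{Prop-Tech:Fujita-Kato} and \ref{Prop-Tech:Fujita-Kato-Gevrey} with $\vef=\eg=0$, and your first part is fine. In the second part you correctly notice something the proofs of Lemmas \ref{Lem-Tech-Bilinear-FK-Gevrey} and \ref{Lem-Tech-Linear-FK-Gegrey} gloss over. The weight $e^{r\sqrt{t}|\xi|}$ sits at the outer time $t$, while $\|e\|_{F_T}$ only controls $e^{r\sqrt{\tau}|\xi|}$ at $\tau\le t$. So those lemmas cannot be quoted verbatim, and the splitting $e^{r\sqrt{t}|\xi|}\le e^{r\sqrt{t-\tau}|\xi|}e^{r\sqrt{\tau}|\xi|}$ is the right repair. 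For the linear term, the split-off factor destroys the semigroup structure, so Lemma \ref{Lem-Tech:Heat-Estimate}(2) must be re-derived, e.g.\ from $e^{r\sqrt{s}|\xi|-s|\xi|^2}\le e^{r^2/2}e^{-s|\xi|^2/2}$ and Cauchy--Schwarz in $\tau$ at fixed $\xi$.

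The gap is your last step: the claim that with $T=T_1$ \emph{as stated} the conditions (\ref{Conditions-Picard}) follow ``by the same arithmetic''. Your repair makes the constants $r$-dependent, $C_B\le C\,k_B(r)\,T^{1/4}$ and $C_L\le k_L(r)\,\eta_1\,T^{1/2}$, where at best $k_B(r),k_L(r)$ grow like $e^{r^2/4}$ up to polynomial factors. This kernel bound is sharp, since $r\sqrt{s}|\xi|-s|\xi|^2$ attains $r^2/4$ at $\sqrt{s}|\xi|=r/2$.

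\emph{The loss in $C_B$.} It can be absorbed by $\delta_1$, but only if you take the sharp bound $\|e_0\|_{F_T}\le e^{r^2/4}(\|\vv_0\|_{\dot H^1}+\|\vartheta_0\|_{\dot H^1})$ as the Picard radius, since then $e^{r^2/4}k_B(r)\lesssim e^{r^2}$. With $\delta=\delta_1$, as you wrote, it is not absorbed.

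\emph{The loss in $C_L$.} It cannot be absorbed at all. The condition $C_L<\frac13$ involves only $\eta_1$, and $T_1\le\frac12(3\eta_1)^{-2}$ gives only $C_L\le k_L(r)/(3\sqrt{2})$, which exceeds $\frac13$ for moderately large $r$. The factor $e^{r^2}$ in $\delta_1$ plays no role there.

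So your argument proves the statement only after enlarging $\eta_1$ by an $r$-dependent factor, e.g.\ $\eta_1:=C e^{r^2}\|e^{r\sqrt{-t\Delta}}\veg\|_{L^\infty_t\dot H^{\frac12}_x}$. The paper's $r$-free $\eta_1$ rests on Lemma \ref{Lem-Tech-Linear-FK-Gegrey}, whose proof has exactly the $t$-versus-$\tau$ mismatch you spotted. You should therefore state the modified $T_1$ explicitly. This is harmless downstream, since in Theorem \ref{Th:Gevrey-Homogeneous} the radius $\varrho$ is allowed to depend on $r$.
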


Within this framework, for any solution $(\vu, \theta)\in \dot{H}^1(\Rt)$ of the homogeneous Boussinesq system (\ref{Boussinesq-Homogeneous}), we proceed as in Step~3 of the proof of Theorem \ref{Th:Gevrey}. Nevertheless, in this case the time $T_1$ depends on $\| \vu \|_{\dot{H}^1}$, $\| \theta \|_{\dot{H}^1}$, and $\| \vvg \|_{G^{-\frac{1}{2}}_r}$. Consequently, the quantity 
\[
\varrho:= \frac{2r}{3} T_1,
\]
also depends on $\| \vu \|_{\dot{H}^1}$ and $\| \theta \|_{\dot{H}^1}$. This completes the proof of Theorem \ref{Th:Gevrey-Homogeneous}. 

\section{Liouville-type theorem for weak $\dot{H}^1$-solutions: proof of Theorem \ref{Th:Liouville}}\label{Sec:Liouville} 

Let $(\vu, \theta)\in \dot{H}^1(\Rt)$ be a weak solution of the system (\ref{Boussinesq-Evolution-Mild}), with $\vvg \in G^{\frac{1}{2}}_r(\Rt)$ for some fixed $r>0$. Then, by Theorem \ref{Th:Gevrey-Homogeneous}, there exists $0<\rho<\frac{2r}{2}$ such that 
\[ (\vu, \theta) \in G^1_{\rho}(\Rt). \]
With this fact, from Proposition \ref{Prop-Tech-L-infty} it holds that 
\[ \left( \widehat{\vu}, \widehat{\theta}  \right) \in L^1(\Rt).  \]

Using this information, we can prove the following technical result. Here we recall that the homogeneous Besov space $\dot{B}^{-1}_{\infty,\infty}(\Rt)$ is defined as the space of tempered distributions $\varphi \in \mathcal{S}'(\Rt)$ satisfying 
\[ \| \varphi \|_{\dot{B}^{-1}_{\infty,\infty}}:= \sup_{t>0} t^{\frac{1}{2}}\, \| e^{t\Delta} \varphi \|_{L^\infty}<+\infty. \]

\begin{Lemme} Let $\left( \widehat{\vu}, \widehat{\theta}  \right) \in L^1(\Rt)$. Assume in addition that (\ref{Assumption-Liouville}) holds. Then, we have $\left( \vu, \theta\right) \in \dot{B}^{-1}_{\infty,\infty}(\Rt)$.  
\end{Lemme}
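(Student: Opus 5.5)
We bound $\sup_{t>0} t^{\frac12}\|e^{t\Delta}\varphi\|_{L^\infty}$ for $\varphi=\vu$ (and identically for $\theta$) by passing to the Fourier side, where
\[
\|e^{t\Delta}\varphi\|_{L^\infty}\leq C\int_{\Rt} e^{-t|\xi|^2}|\widehat{\varphi}(\xi)|\,d\xi .
\]
This is legitimate since $\widehat{\varphi}\in L^1(\Rt)$ by Proposition \ref{Prop-Tech-L-infty}. We split the frequency space into a high-frequency region $|\xi|>1$ and the low-frequency region $|\xi|\leq 1=\bigcup_{k\in\mathbb{N}}\mathscr{C}_k$ (up to the null set $\{0\}$). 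We treat the two pieces separately, and also distinguish large and small times.

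For $|\xi|>1$ we simply use $e^{-t|\xi|^2}\leq 1$ together with the $L^1$ bound, which gives a contribution $\|\widehat{\varphi}\|_{L^1}$. This suffices for $t\leq 1$. For $t\geq 1$ we use $e^{-t|\xi|^2}\leq e^{-t}$ on $|\xi|>1$, so the contribution is $t^{\frac12}e^{-t}\|\widehat\varphi\|_{L^1}$, which is bounded uniformly in $t$. Thus the high frequencies are harmless.

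The low frequencies are the main step. On each annulus we use (\ref{Assumption-Liouville}), namely $|\widehat\varphi|\leq C2^k$ on $\mathscr{C}_k$, together with $|\mathscr{C}_k|\sim 2^{-3k}$ and $e^{-t|\xi|^2}\leq e^{-t4^{-(k+1)}}$ there. This yields
\[
t^{\frac12}\int_{|\xi|\leq 1}e^{-t|\xi|^2}|\widehat\varphi|\,d\xi\leq C\,t^{\frac12}\sum_{k\geq 0}2^{-2k}e^{-t4^{-(k+1)}} .
\]
For $t\leq 1$ the sum is at most $\sum_k 2^{-2k}<\infty$. For $t\geq 1$ we set $s_k=t4^{-k}$ and write $t^{\frac12}2^{-2k}=t^{-\frac12}s_k$. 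The sum then becomes $t^{-\frac12}\sum_k s_k e^{-s_k/4}$. Since the $s_k$ form a geometric sequence, $\sum_k s_ke^{-s_k/4}$ is bounded by an absolute constant: the terms with $s_k\leq1$ sum geometrically to $O(1)$, and those with $s_k>1$ are dominated by a geometric sequence due to exponential decay. Hence the low-frequency contribution is even $O(t^{-\frac12})$ for large $t$. (One can compare with $\int_{|\xi|\leq1}|\xi|^{-2}\,d\xi<\infty$, which is really what the hypothesis encodes: $|\widehat\varphi(\xi)|\lesssim|\xi|^{-1}$ near the origin.)

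The only delicate point is to keep the constant in (\ref{Assumption-Liouville}) uniform in $k$ and to check the dyadic summation uniformly in $t$. Both follow from the geometric structure above. Combining the high- and low-frequency bounds gives $\sup_{t>0}t^{\frac12}\|e^{t\Delta}(\vu,\theta)\|_{L^\infty}<+\infty$, that is, $(\vu,\theta)\in\dot B^{-1}_{\infty,\infty}(\Rt)$.
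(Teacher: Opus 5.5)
Your argument is correct. It uses the same ingredients as the paper: the Fourier $L^1$ bound, the split at $|\xi|=1$, dyadic annuli with $|\mathscr{C}_k|\lesssim 2^{-3k}$, and $\widehat{\varphi}\in L^1$ for the tail. The difference is in how you obtain uniformity in $t$.

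The paper never splits in time. It writes $t^{\frac12}e^{-t|\xi|^2}=\big(|t^{\frac12}\xi|e^{-|t^{\frac12}\xi|^2}\big)|\xi|^{-1}$ and bounds the bracket by $\sup_{x\geq 0}xe^{-x^2}$. This gives in one step $\sup_{t>0}t^{\frac12}\|e^{t\Delta}\varphi\|_{L^\infty}\leq C\,\||\xi|^{-1}\widehat{\varphi}\|_{L^1}$, a $t$-independent quantity. On $\mathscr{C}_k$ it then uses $|\xi|^{-1}\leq 2^{k+1}$ and the hypothesis to get the series $\sum_k 2^{-k+1}$. On $|\xi|>1$ it only needs $|\xi|^{-1}\leq 1$.

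You keep the heat multiplier inside the integral. That forces the case split $t\leq 1$ versus $t\geq 1$ on both frequency pieces, together with the geometric-sequence bound $\sum_k s_k e^{-s_k/4}\leq C$. That bound is valid: terms with $s_k\leq 1$ sum to at most $\frac43$, and terms with $s_k>1$ are controlled by $\frac{4}{e}+\sum_{j\geq 1}4^j e^{-4^{j-1}}$.

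Your closing remark that the hypothesis encodes $|\widehat{\varphi}(\xi)|\lesssim|\xi|^{-1}$ near the origin is exactly the paper's sufficient condition $|\xi|^{-1}\widehat{\varphi}\in L^1$. Using it directly through the multiplier bound would remove all of your time-splitting.

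Your route does give slightly more quantitative information. As $t\to+\infty$ the low-frequency part is $O(t^{-\frac12})$ and the high-frequency part is $O(t^{\frac12}e^{-t})$, while everything is $O(t^{\frac12})$ as $t\to 0$. However, the fact that $t^{\frac12}\|e^{t\Delta}\varphi\|_{L^\infty}\to 0$ at both ends also follows from the paper's bound by dominated convergence. So the paper's approach is mainly the more economical one.
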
	 

\begin{proof} Let $t>0$. Then, we write
	\begin{equation*}
	\begin{split}
	t^{\frac{1}{2}}\, \left\| e^{t\Delta} \left( \vu, \theta \right)   \right\|_{L^\infty} \leq &\,  t^{\frac{1}{2}}\, \left\| e^{-t|\xi|^2} \left( \widehat{\vu},  \widehat{\theta}\right)   \right\|_{L^1} =   t^{\frac{1}{2}}\, \left\||\xi| e^{-t|\xi|^2}\, |\xi|^{-1}  \left( \widehat{\vu},  \widehat{\theta}\right)   \right\|_{L^1} \\
	=&\, \left\|\left| t^{\frac{1}{2}}\, \xi\right| e^{- \left|t^{\frac{1}{2}}\xi\right|^2}\, |\xi|^{-1}  \left( \widehat{\vu},  \widehat{\theta}\right) \right\|_{L^1}  \leq \left\|  \left| t^{\frac{1}{2}}\, \xi\right| e^{- \left|t^{\frac{1}{2}}\xi\right|^2} \right\|_{L^\infty}\, \left\|\, |\xi|^{-1} \left( \widehat{\vu}, \widehat{\theta}\right) \right\|_{L^1} \\
	\leq &\, C\, \left\|\, |\xi|^{-1} \left( \widehat{\vu}, \widehat{\theta}\right) \right\|_{L^1}.
	\end{split}
	\end{equation*}	
	
	To control this last expression, we split
	\begin{equation*}
	\left\|\, |\xi|^{-1} \left( \widehat{\vu}, \widehat{\theta}\right) \right\|_{L^1} = \left\|\, |\xi|^{-1} \left( \widehat{\vu}, \widehat{\theta}\right) \right\|_{L^1(|\xi|\leq 1)}+ \left\|\, |\xi|^{-1} \left( \widehat{\vu}, \widehat{\theta}\right) \right\|_{L^1(|\xi|>1)}=:I_1+ I_2,
	\end{equation*}
	where we estimate each term separately. 
	
	\medskip
	
	To study $I_1$, for any $k\in \mathbb{N}$ we consider the annulus $\mathscr{C}_k$ defined in (\ref{C_k}) and use the following dyadic decomposition
	\[ I_1= \sum_{k=0}^{\infty} \int_{\mathscr{C}_k} |\xi|^{-1}\, \left( \widehat{\vu}, \widehat{\theta}\right)(\xi) d \xi.  \]
	
	By definition of $\mathscr{C}_k$, for any $\xi \in \mathscr{C}_k$ we have $2^{-(k+1)}\leq |\xi|\leq 2^{-k}$, hence $|\xi|^{-1}\leq 2^{k+1}$. Then we obtain 
	\begin{equation*}
	\begin{split}
	I_1 \leq &\, \sum_{k=0}^{\infty} 2^{k+1} \int_{\mathscr{C}_k} \left( \widehat{\vu}, \widehat{\theta}\right)(\xi) d \xi \leq C\, \sum_{k=0}^{+\infty} 2^{k+1} \, \left\| \left( \widehat{\vu}, \widehat{\theta}\right) \right\|_{L^\infty(\mathscr{C}_k)}\,\left( \int_{\mathscr{C}_k} d \xi\right) \\
	\leq &\,  C\, \sum_{k=0}^{+\infty} 2^{k+1} \, \left\| \left( \widehat{\vu}, \widehat{\theta}\right) \right\|_{L^\infty(\mathscr{C}_k)}\, 2^{-3k}
	\leq \, C\, \sum_{k=0}^{+\infty} 2^{-2k+1} \, \left\| \left( \widehat{\vu}, \widehat{\theta}\right) \right\|_{L^\infty(\mathscr{C}_k)}.
	\end{split}
	\end{equation*}
	
	Finally, by assumption (\ref{Assumption-Liouville}), we have
	\[ I_1 \leq  C\, \sum_{k=0}^{+\infty} 2^{-2k+1} \, 2^k = C\, \sum_{k=0}^{+\infty} 2^{-k+1} <+\infty. \]
	
	To study $I_2$, since $\left( \widehat{\vu}, \widehat{\theta}  \right) \in L^1(\Rt)$ and $|\xi|>1$, we directly have 
	\[ I_2 \leq \left\| \left( \widehat{\vu}, \widehat{\theta}\right) \right\|_{L^1(|\xi|>1)} <+\infty.\]
\end{proof}	

\medskip

Once we have that $\left( \vu, \theta\right) \in \dot{B}^{-1}_{\infty,\infty}(\Rt)$, the proof of Proposition \ref{Prop:Liouville} concludes as follows. As we also have $\left( \vu, \theta\right) \in \dot{H}^1(\Rt)$, applying the improved Sobolev inequalities (see \cite{Gerard}) we can write 
\[ \left\| \left( \vu, \theta\right) \right\|_{L^4} \leq C \left\| \left( \vu, \theta\right) \right\|^{\frac{1}{2}}_{\dot{B}^{-1}_{\infty,\infty}}\, \left\| \left( \vu, \theta\right) \right\|^{\frac{1}{2}}_{\dot{H}^1} <+\infty. \] 

Returning to the second equation in the Boussinesq system (\ref{Boussinesq-Homogeneous}), as $\left( \vu, \theta\right) \in L^4(\Rt)$, a simple computation yields that $\text{div}(\theta\, \vu)\in \dot{H}^{-1}(\Rt)$. From a standard energy estimate we find that $\int_{\Rt} | \vec{\nabla}\theta |^2\, dx =0$, which implies that $\theta =0$ due to the well-known Sobolev embedding $\dot{H}^1(\Rt) \subset L^6(\Rt)$. 

\medskip

Once we have $\theta=0$, the first equation of the Boussinesq system (\ref{Boussinesq-Homogeneous}) reduces to the classical Navier–Stokes equations, where the fact that $\vu \in L^4(\Rt)$ together with its divergence-free property yields that $\vu=0$. See  \cite[Theorem $1$]{Chamorro-Jarrin-Lemarie}. 

\medskip

Theorem \ref{Th:Liouville} is proven.

\medskip 

\paragraph{{\bf Statements and Declaration}}
Data sharing does not apply to this article as no datasets were generated or analyzed during the current study.  In addition, the authors declare that they have no conflicts of interest, and all of them have equally contributed to this paper.

%%%%%%%%%%%%%%%%%%%%%%%%%%%%%%%%%%%%%%%%%%%%%%

\vspace{1cm} 


\begin{thebibliography}{40}
	\bibitem{Avecedo} P. Acevedo, C. Amrouche \& C. Conca. \emph{Boussinesq system with non-homogeneous boundary conditions}. Applied Mathematics Letters, Volume 53: 39-44 (2016). 
	%%%%%%%%
	\bibitem{Biswas} A.  Biswas \emph{ Gevrey regularity for a class of dissipative equations with applications to
	decay}. J. Differential Equations, 253: 2739-2764 (2012).
 	%%%%%%%%
 	\bibitem{Bourgain} J. Bourgain and N. Pavlovi\'c. \emph{ Ill-posedness of the Navier–Stokes equations in a critical space in 3D}, J. Funct. Anal. 255 no. 9, 2233-2247. (2008).
 	%%%%%%%%
	\bibitem{Boussinesq} J. Boussinesq. \emph{Théorie analytique de la chaleur: mise en harmonie avec la thermodynamique et avec la théorie mécanique de la lumière}. Volume 2. Gauthier-Villars, Paris, (1903).
	%%%%%%%
	\bibitem{Brezis}  H. Brezis. \emph{Analyse Fonctionnelle, Théorie et applications}. Dunod, (1999).
	%%%%%%%	
	\bibitem{Chae} D. Chae and J. Wolf. \emph{On Liouville type theorems for the steady Navier–Stokes equations in  $\Rt$}.  J. Differ. Equ. 261, 5541–5560, (2016).
	%%%%%%%
	\bibitem{Chamorro-Jarrin-Lemarie-Gevrey} D. Chamorro, O. Jarrín, and P.-G. Lemarié-Rieusset. \emph{Frequency decay for Navier–Stokes stationary solutions} C. R. Math. Acad. Sci. Paris 357 (2), 175–179, (2019).
	%%%%%%%
	\bibitem{Chamorro-Jarrin-Lemarie} D. Chamorro, O. Jarrín, and P.-G. Lemari\'e-Rieusset. \emph{Some Liouville theorems for stationary Navier–Stokes equations in Lebesgue and Morrey spaces}. Ann. Inst. H. Poincaré Anal. Non Linéaire 38 (3), 689–710 (2021).
	%%%%%%%
	\bibitem{Chamorro-Yangari} D. Chamorro and M. Yangari. \emph{ Some existence and regularity results for a non-local transport-diffusion equation with fractional derivatives in time and space}. J. Differ. Equ. 428: 389–421  (2025). 
	%%%%%%%
	\bibitem{Chandrasekhar} S. Chandrasekhar. \emph{Hydrodynamic and Hydromagnetic Stability}. The International Series of Monographs on Physics. Clarendon Press, Oxford, (1961).
	%%%%%%%
	\bibitem{Cortez-Jarrin}  M. F. Cortez and O. Jarrín and. \emph{ On the long-time behavior for a damped Navier-Stokes-Bardina model}.  Discrete Contin. Dyn. Syst. 42, no. 8, 3661–3707 (2022).
	%%%%%%%
	\bibitem{Foias-Temam} C.  Foias and R. Temam. \emph{Gevrey class regularity for the solutions of the Navier-Stokes
	equations}. J. Funct. Anal, 87: 359-369 (1989).
	%%%%%%%
	\bibitem{Gerard} P. G\'erard, Y. Meyer and F. Oru. \emph{Improved Sobolev inequalities}. Seminary of partial differential equations.
	Vol. 1-8 (1996-1997). 
	%%%%%%%
	\bibitem{GigaMiyakawa} Y. Giga and T. Miyakawa. \emph{Navier-stokes flow in $\Rt$ with measures as initial vorticity and morrey spaces}. Communications in Partial Differential Equations, 14:5, 577-618 (1989).
	%%%%%%%
		\bibitem{Gil}  M.I. Gil. \emph{Solvability of a system of stationary Boussinesq equations}.  J. Differential Equations 271: 1377–1382  (1991).
	%%%%%%%
	\bibitem{Grafakos} L. Grafakos and S. Oh. \emph{The Kato–Ponce inequality}. Comm. Partial Differential Equations 39, no. 6, 1128-1157 (2014).
	%%%%%%%
	\bibitem{Guo} B. Guo and B. Wang. \emph{Gevrey class regularity and approximate inertial manifolds for the Newton–Boussinesq equations} Chinese Ann. Math. Ser. B 19 (2), 179–188, (1998).
	%%%%%%%
	\bibitem{Jarrin1} O. Jarrín. \emph{A remark on the Liouville problem for stationary Navier–Stokes equations in Lorentz and Morrey spaces}. J. Math. Anal. Appl. 486, 123871, (2020)
	%%%%%%%
	\bibitem{Jarrin2} O. Jarrín. \emph{A short note on the Liouville problem for the steady-state Navier–Stokes equations}. Arch. Math. 121, 303–315, (2023). 
	%%%%%%%
	\bibitem{Kalantarov} V. K. Kalantarov, B. Levant, and E. S. Titi. \emph{ Gevrey regularity for the attractor of the 3D Navier–Stokes–Voigt equations}. J. Nonlinear Sci. 19 (2), 133–152, (2009).
	%%%%%%%
	\bibitem{Kim} H. Kim, The existence and uniqueness of very weak solutions of the stationary Boussinesq system, Nonlinear Anal. 75 (1), 317–330, (2012). 
	%%%%%%%
	\bibitem{PGLR1} P.G. Lemari\'e-Rieusset. \emph{The Navier-Stokes Problem in the 21st Century}. Chapman \& Hall/CRC, (2016).
	%%%%%%%
	\bibitem{Li} Q. Li, P. Wang, and X. Xu, Local well-posedness in Gevrey function spaces for 3D Boussinesq boundary layer system, J. Differential Equations 450, 113725, (2026).
	%%%%%%%
		\bibitem{Maimoto1}  H. Morimoto. \emph{On the existence of weak solutions of equation of natural convection}.  J. Fac. Sci. Univ. Tokyo 36: 87–102 (1989).
	%%%%%%%
	\bibitem{Marimoto2} H. Morimoto, \emph{On the existence and uniqueness of the stationary solution to the equations of natural convection}.  Tokyo J. Math. 14: 365–375 (1991). 
	%%%%%%%
	\bibitem{Naibo} V. Naibo and A. Thomson. \emph{Coifman–Meyer multipliers: Leibniz-type rules and applications to scattering of solutions to PDEs}. Trans. Amer. Math. Soc. 372, no. 8, 5453–5481 (2019).
	%%%%%%%
	\bibitem{Paicu} M. Paicu and V. Vicol. \emph{Analyticity and Gevrey-class regularity for the second-grade fluid
	equations}. J. Math. Fluid Mech., 13: 533-555 (2011). 
	%%%%%%%
	\bibitem{Pedlosky}  J. Pedlosky. \emph{Geophysical Fluid Dynamics}. 
	 2nd ed., Springer, (1987).
	%%%%%%%
	\bibitem{Seregin} G. Seregin. \emph{Liouville type theorem for stationary Navier–Stokes equations} Nonlinearity 29, 2191–2195, (2016).
	%%%%%%%
	\bibitem{Tan} W. Tan, New Liouville type theorems for the stationary Navier–Stokes equations, arXiv:2501.03609 (2025).
	%%%%%%%
	\bibitem{Vallis}  G. K. Vallis. \emph{Atmospheric and Oceanic Fluid Dynamics}. 2nd ed., Cambridge University Press, (2017).
	%%%%%%%
	\bibitem{Zhou} Y. Zhou. \emph{Analytical smoothing effect of solution for the Boussinesq equations}. Acta Math. Sin. (Engl. Ser.) 24 (11), 1777--1790, (2008).
	
	
	
\end{thebibliography}
\end{document}